\def\R{\mathbb{R}}
\newtheorem{theorem}{Theorem}[section]
\newtheorem{proposition}[theorem]{Proposition}
\newtheorem{corollary}[theorem]{Corollary}
\newtheorem{lemma}[theorem]{Lemma}
\newtheorem{remark}[theorem]{Remark}
\def\cU{\mathcal{U}}
\def\cW{\mathcal{W}}
\def\cX{\mathcal{X}}
\def\cY{\mathcal{Y}}
\def\cI{\mathcal{I}}
\def\bp{\mathbf{p}}
\def\bu{\mathbf{u}}
\def\bv{\mathbf{v}}
\def\bw{\mathbf{w}}
\def\by{\mathbf{y}}
\def\bby{\mathbb{Y}}
\def\bz{\mathbf{z}}
\def\uy{\underline{y}}
\def\budag{\mathbf{u}^\dagger}
\def\thdag{\theta^\dagger}
\def\bF{\mathbb{F}}
\def\bR{\mathbb{R}}
\def\ddt{{\textstyle\frac{d}{dt}}}
\def\eps{\varepsilon}
\newcommand{\dup}[3]{\langle{#1},{#2}\rangle_{#3^*,#3}}
\newcommand{\eqref}[1]{(\ref{#1})}
\begin{document}
\title[All-at-once versus reduced iterative methods for time dependent inverse problems]{All-at-once versus reduced iterative methods for time dependent inverse problems}

\author{B Kaltenbacher}

\address{Alpen-Adria-Universit\"at Klagenfurt, Universit\"atstra{\ss}e 65-67, A-9020 Klagenfurt, Austria}

\ead{\mailto{Barbara.Kaltenbacher@aau.at}}
\begin{abstract}
In this paper we investigate all-at-once versus reduced regularization of dynamic inverse problems on finite time intervals $(0,T)$. In doing so, we concentrate on iterative methods and nonlinear problems, since they have already been shown to exhibit considerable differences in their reduced and all-at-once versions, whereas Tikhonov regularization is basically the same in both settings.
More precisely, we consider Landweber iteration, the iteratively regularized Gauss-Newton method, and the Landweber-Kaczmarz method, the latter relying on cyclic iteration over a subdivision of the problem into subsequent subintervals of $(0,T)$.
Part of the paper is devoted to providing an appropriate function space setting as well as establishing the required differentiability results needed for well-definedness and convergence of the methods under consideration. 
Based on this, we formulate and compare the above mentioned iterative methods in their all-at-once and their reduced version. Finally, we provide some convergence results in the framework of Hilbert space regularization theory and illustrate the convergence conditions by an example of an inverse source problem for a nonlinear diffusion equation. 
\end{abstract}

\maketitle

\section{Introduction}
\label{sec:Intro}

A large number of inverse problems in applications ranging from engineering via economics to systems biology can be formulated as a state space system
\begin{eqnarray}
\dot{\bu}(t)=f(t,\bu(t),\theta)+\bw^\delta(t) \quad t\in(0,T)\,, \quad\bu(0)=u_0(\theta)
\mbox{ and }\label{mod}\\
\by^\delta(t)=g(t,\bu(t),\theta)+\bz^\delta(t)\quad t\in(0,T) 
\label{obs}\\
\mbox{ or } \nonumber\\
\uy^\delta_i=g_i(\bu(t_i),\theta)+\bz_i^\delta \quad i\in\{1,\ldots,n\},
\label{obsdiscr}
\end{eqnarray}
where the dot denotes differentiation with respect to time and $\theta$ is a finite or infinite dimensional parameter that is supposed to be identified from the additional continuous or discrete observations \eqref{obs} or \eqref{obsdiscr}, respectively.
As relevant in applications, we also consider perturbations both in the model \eqref{mod} and in the observations \eqref{obs}, or \eqref{obsdiscr}, respectively. We will assume $\bw^\delta$ and $\bz^\delta$ to be unknown, only knowledge on bounds of their norms will be required to obtain convergence results. 
An exact solution $(\budag,\thdag)$ of the inverse problem is supposed to satisfy the unperturbed system
\begin{eqnarray}
\dot{\bu}^\dagger(t)=f(t,\budag(t),\thdag) \quad t\in(0,T)\,, \quad\bu(0)=u_0(\theta)
\mbox{ and }\label{modex}\\
\by(t)=g(t,\budag(t),\thdag)\quad t\in(0,T) 
\label{obsex}\\
\mbox{ or } \nonumber\\
\uy_i=g_i(\budag(t_i),\thdag) \quad i\in\{1,\ldots,n\},
\label{obsdiscrex}
\end{eqnarray}
and will --- as a prerequisite for its computational approximation, which will be in the focus of this paper --- be assumed to exist throughout this paper. (For sufficient conditions of well-posedness of the model equation for fixed parameter $\theta$ we refer, e.g., to \cite{Roubicek,Wloka} and the references therein.) However, unless otherwise stated, uniqueness of $(\budag,\thdag)$ will not be be presumed, here.

\medskip

In this paper we will mainly focus on continuous observations \eqref{obs}, \eqref{obsex}. The necessary modifications when dealing with discrete observations \eqref{obsdiscr}, \eqref{obsdiscrex} will be mentioned in separate remarks.

\medskip 

The aim of this paper is to formulate inverse problems of this kind in a reduced and an all-at-once fashion and compare some iterative regularization methods in these two different settings as they have already been shown to exhibit crucial differences for stationary inverse problems \cite{aao16}.
In particular, we will consider Landweber iteration \cite{HNS95} and the iteratively regularized Gauss-Newton method \cite{Baku92,BNS97}, as well as the Landweber-Kaczmarz iteration \cite{HKLS07,HLS07,KS02}, as the latter obviously lends itself to a subdivision of the problem to a union of time intervals $[0,T]=\bigcup_{j\in \{1,\ldots,m\}}[\tau_{j-1},\tau_j]$.

\medskip

Dynamic inverse problems and their numerical solution have recently attracted much interest, driven by applications ranging from dynamic imaging via time domain inverse scattering to the  identification of material properties from transient experiments, see, e.g., \cite{BurgerRossmanithZhang16,ChenHaddarLechleiterMonk10,Hahn14,KirschRieder16,LechleiterSchlasche17,SchusterWoestehoff14} to name just a few examples.
Recent work on general parameter identification and regularization approaches for dynamic models can, e.g., be found in \cite{ArnoldCalvettiSomersalo13,HaoNguyen12,KindermannLeitao07,Baumeister97,Kuegler10}
in the frameworks of statistic filtering, linear semigroup theory, dynamic programming, and online parameter identification, respectively. 
The present paper --- though dealing with the common topic of methods for solving dynamic inverse problems --- differs from these in the sense that we here consider a deterministic and nonlinear setting on a finite time horizon.

\bigskip   

We consider the model equation \eqref{mod} in the standard function space setting 
\[
\hspace*{-1cm}
\bu\in W^{1,p,p^*}(0,T;V,V^*)=\{\bv\in L_p(0,T;V)\, : \, \dot{\bu}\in L_{p^*}(0,T;V^*)\}
\subseteq C(0,T;H)\,,
\]
with $p^*=\frac{p}{p-1}$ and $V\subseteq H\subseteq V^*$ forming a Gelfand triple, i.e., $H$ a Hilbert space (to be identified with itself) with dense and continuous embedding $V\subseteq H$, consequently also $H\subseteq V^*$ cf., e.g., \cite{Roubicek,Wloka}. Additionally we will assume $H$  and $V$ to be separable.

The functions defining the model and observation equations 
\begin{eqnarray}
f:(0,T)\times V\times\cX\to V^*\,, \label{f}\\ 
g:(0,T)\times V\times\cX\to Z \mbox{ or } g_i:V\times\cX\to Z \label{g}
\end{eqnarray}
are, for fixed $\theta\in \cX$, assumed to be Caratheodory mappings, i.e., for all $(v,\theta)\in V\times\cX$ fixed, $f(\cdot,v,\theta)$, $g(\cdot,v,\theta)$ are measurable and for (almost) all $(t,\theta)\in(0,T)\times\cX$, $f(t,\cdot,\theta)$, $g(t,\cdot,\theta)$ are continuous. Moreover, they will be supposed to satisfy certain growth conditions to guarantee appropriate mapping properties of the induced Nemitskii operators on the Bochner space $L_p(0,T;V)$.
Moreover, the initial data is assumed to be contained in $H$
\begin{equation}\label{u0}
u_0:\cX\to H\,.
\end{equation}
Corresponding to this setting, the function spaces in which the perturbations live are 
\[
\bw^\delta\in L_{p'}(0,T;V^*), \quad \bz^\delta\in L_{q}(0,T;Z)\,.
\]

\medskip

In this paper we restrict ourselves to the Hilbert space framework, i.e., $p=p'=q=2$, $\cX,V,V^*,Z$ are Hilbert spaces and hence, so are 
\begin{equation}\label{UWY}
\cU=W^{1,2,2}(0,T;V,V^*)\,, \quad \cW=L_2(0,T;V^*)\,, \quad \cY=L_2(0,T;Z)\,,
\end{equation}
to avoid certain technicalities such as the use of nonlinear duality mappings and the somewhat more complicated stepsize choice for Landweber iteration in general Banach spaces in our exposition. 

\medskip

\begin{remark}\label{rem:timedependenttheta}
Note that the theory developed here in principle also comprises the case of time dependent parameters $\theta$ via the use of a space of time dependent functions as parameter space $\cX$. In particular, the case of a parameter evolving simultaneously with the state 
\begin{eqnarray*}
\dot{\bu}(t)=\tilde{f}(t,\bu(t),\theta(t))+\bw^\delta(t) \quad t\in(0,T)\,, \quad\bu(0)=u_0(\theta_0)
\\
\by^\delta(t)=\tilde{g}(t,\bu(t),\theta(t))+\bz^\delta(t)\quad t\in(0,T) 
\\
\mbox{ or } \nonumber\\
\uy^\delta_i=\tilde{g}_i(\bu(t_i),\theta(t_i))+\bz_i^\delta \quad i\in\{1,\ldots,n\}
\end{eqnarray*}
with 
\begin{eqnarray*}
\tilde{f}:(0,T)\times V\times X\to V^*\,, \\ 
\tilde{g}:(0,T)\times V\times X\to Z \mbox{ or } g_i:V\times X\to Z \,,
\end{eqnarray*}
for some Hilbert space $X$, 
can be cast into the form \eqref{mod}, \eqref{obs} by setting
\[
\eqalign{
f(t,\bu(t),\theta)=\tilde{f}(t,\bu(t),\theta(t))\,, \quad
g(t,\bu(t),\theta)=\tilde{g}(t,\bu(t),\theta(t))\,, \\
\cX=L^2(0,T;X)\times X_0\,.
}
\]
However, the growth conditions imposed below for $f$ and $g$ in order to guarantee well--definedness and differentiability of the involved operators would translate to quite different growth conditions on $\tilde{f}$, $\tilde{g}$ or rather would have to be derived explicitly. Thus, in most of this paper we actually have the case of time independent parameters in mind. 
\end{remark} 

\subsubsection*{Notation}

In defining iterative methods according to the standard literature, Hilbert space adjoints have to be used, that will be denoted by ${}^\star$ as opposed to the Banach space adjoints denoted by ${}^*$.

Inner products and dual pairings will be denoted by $(\cdot,\cdot)$ and $\langle\cdot,\cdot\rangle$, respectively, with subscripts indicating the corresponding spaces.
The Hilbert spaces $\cX$, $Z$, $H$ (that will typically be Lebesgue spaces in applications) will immediately be identified with their duals, especially when defining Banach space adjoints such as $f'_\theta(t,\bu(t),\theta)^*:\cW^*\to \cX^*=\cX$. For $V$ (while identifying it with its bidual $V^{**}$) we will make a notionally clear distinction from its dual $V^*$, as these are typically Sobolev spaces of positive and negative differentiability order, respectively, thus having different inner products.  
Thus, for defining adjoints, the Riesz isomorphisms on $V$ and $V^*$, will be useful.
\begin{eqnarray}
D:V\to V^* \quad \forall u,v\in V\,: \ \dup{Du}{v}{V}:=(u,v)_V
\label{D}\\
I:V^*\to V \quad \forall u^*,v^*\in V^*\,: \ \dup{v^*}{Iu^*}{V}:=(u^*,v^*)_{V^*}.
\label{I}
\end{eqnarray}
The notation $D$ and $I$ relates to the fact that in the context of parabolic differential equations these operators correspond to a spatial differential and antiderivative (``integration'') operator, respectively.
On the space $\cU=W^{1,2,2}(0,T;V,V^*)$, we use 
\begin{equation}\label{innprodU}
(\bu,\bv)_\cU=\int_0^T (\dot{\bu}(t)+D\bu(t),\dot{\bv}(t)+D\bv(t))_{V^*}dt+(\bu(0),\bv(0))_H
\end{equation}
as an inner product, which by standard results on well-posedness of the evolution equation 
$\dot{\bu}(t)+D\bu(t)=\bw(t)$, $t\in(0,T)$, $\bu(0)=u_0$, 
induces a norm that is equivalent to the standard norm 
$\sqrt{\int_0^T(\|\dot{\bu}(t)\|_{V^*}^2+\|\bu(t)\|_V^2)dt}$ on $\cU$.
Moreover, we will frequently make use of the integration by parts formula in Bochner spaces
\begin{equation}\label{intbyparts}
\eqalign{
\forall \bu,\bv\in \cU\, : \ (\bu(T),\bv(T))_H-(\bu(0),\bv(0))_H=\\
\hspace*{2cm}\int_0^T(\dup{\dot{\bu}(t)}{\bv(t)}{V}+\dup{\dot{\bv}(t)}{\bu(t)}{V})\, dt\,,
}
\end{equation}
and of continuity of the embedding $\cU\to C(0,T;H)$ 
\begin{equation}\label{cUC0TH}
\forall \bv\in \cU\, : \ \sup_{t\in(0,T)}\|\bv(t)\|_H\leq c_{\cU\to C(0,T;H)}\|\bv\|_\cU\,.
\end{equation}

\medskip

The remainder of this paper is organized as follows.
In Section \ref{sec:setting} we provide an appropriate function space setting for evolutionary problems, including the proofs of G\^{a}teaux differentiability and the derivation of adjoints for the involved forward operators. Section \ref{sec:methods} states six iterative regularization methods (Landweber, Landweber-Kaczmarz and the iteratively regularized Gauss-Newton method, each in their all-at-once and reduced version) in this dynamic setting and provides a comparison among them. In Section \ref{sec:convanal} we collect some results on convergence of these methods, by combining existing Hilbert  space regularization theory with appropriate assumptions tailored to the time dependent setting. Finally, in section \ref{sec:Concl} we give a short summary and an outlook on further related research questions.
 
\section{Function space setting for reduced and all-at-once setting} \label{sec:setting}
We first of all set the stage for formulating the iterative methods to be considered below by putting \eqref{mod}, \eqref{obs} into an appropriate function space setting and computing derivatives as well as their adjoints.

\subsection{All-at-once formulation}
A formulation containing both state $\bu$ and parameter $\theta$ as unknown is quite straightforward to derive and only requires very moderate assumptions on the functions $f$ and $g$, as long as one is willing to accept the a priori assumption of existence of a solution $(\budag,\thdag)$ to \eqref{modex}, \eqref{obsex}.

The forward operator 
\begin{equation}\label{bF}
\bF:\cU\times\cX\to \cW\times H\times\cY \,, \quad 
(\bu,\theta)\mapsto 
\left(\begin{array}{c}
\dot{\bu}-f(\cdot,\bu,\theta)\\
\bu(0)-u_0(\theta)\\
g(\cdot,\bu,\theta)
\end{array}\right),
\end{equation}
with $\cU,\cW,\cY$ as in \eqref{UWY} is well-defined, provided additionally to the above mentioned Caratheodory property, $f$ and $g$  satisfy the growth conditions 
\begin{eqnarray}
\hspace*{-2cm}\forall t\in(0,T),\ v\in V, \ \theta \in \cX \: \quad 
\|f(t,v,\theta)\|_{V^*}\leq \phi(\|v\|_H+\|\theta\|_\cX) (1+\|v\|_V), \label{growthf}\\
\hspace*{-2cm}\forall t\in(0,T),\ v\in V, \ \theta \in \cX \: \quad 
\|g(t,v,\theta)\|_{Z}\leq \psi(\|v\|_H+\|\theta\|_\cX) (1+\|v\|_V), \label{growthg}
\end{eqnarray}
with increasing functions $\phi$ and $\psi$. 
(In \eqref{bF} and further on below we use the simple notation $f(\cdot,\bu,\theta)$, $g(\cdot,\bu,\theta)$ for the action of the Nemitskii operators induced by the functions $f(\cdot,\cdot,\theta)$ and $g(\cdot,\cdot,\theta)$ on the function $\bu$, cf., e.g., \cite{AppellZabrejko90}.)
Therewith, the inverse problem of finding $(\bu^\dagger,\theta^\dagger)$ satisfying  \eqref{modex}, \eqref{obsex} from given perturbed data 
\[
\bby^\delta=(0,0,\by^\delta)\approx(0,0,\by)=\bby
\]
can be written as 
\[
\bF(\bu,\theta)=\bby \,.
\]  

The G\^{a}teaux derivative of $\bF$ can be easily computed and justified under appropriate assumptions on $f$ and $g$.

\begin{proposition} \label{prop:deriv_bF}
Let $f$, $g$, besides being Caratheodory mappings for every fixed $\theta\in\cX$, be G\^{a}teaux differentiable with respect to their second and third argument, for almost all $t\in(0,T)$, with linear and continuous derivatives and satisfy the following growth conditions:
\begin{equation}\label{growthfuftheta}
\hspace*{-2cm}
\eqalign{
f(t,v,\theta)=f^1(t,v,\theta)+f^2(t,v,\theta)\, \\
\|{f^1_u}'(t,v,\theta)\|_{V\to V^*}\leq \phi^1_u(\|v\|_H+\|\theta\|_\cX) \,, \\
\|{f^2_u}'(t,v,\theta)\|_{H\to V^*}\leq \phi_{H,u}^2(\|v\|_H+\|\theta\|_\cX) (1+\|v\|_V)\,, \\
\|f_\theta'(t,v,\theta)\|_{\cX\to V^*}\leq \phi_\theta(\|v\|_H+\|\theta\|_\cX) (1+\|v\|_V) \,, 
}
\end{equation}
\begin{equation}\label{growthgugtheta}
\hspace*{-2cm}
\eqalign{
g(t,v,\theta)=g^1(t,v,\theta)+g^2(t,v,\theta)\, \\
\|{g^1_u}'(t,v,\theta)\|_{V\to Z}\leq \psi^1_u(\|v\|_H+\|\theta\|_\cX) \,, \\
\|{g^2_u}'(t,v,\theta)\|_{H\to Z}\leq \psi_{H,u}^2(\|v\|_H+\|\theta\|_\cX) (1+\|v\|_V) \,, \\
\|g_\theta'(t,v,\theta)\|_{\cX\to Z}\leq \psi_\theta(\|v\|_H+\|\theta\|_\cX) (1+\|v\|_V) \,, 
}
\end{equation}
with increasing functions $\phi^1_u$, $\phi_{H,u}^2$, $\phi_\theta$, $\psi^1_u$, $\psi_{H,u}^2$, $\psi_\theta$.
Moreover, let also $u_0$ be G\^{a}teaux differentiable.

Then $\bF$ as defined by \eqref{bF} is G\^{a}teaux differentiable on $\cU\times\cX$ and its derivative is given by
\begin{equation}\label{bFprime}
\hspace*{-2cm}
\bF'(\bu,\theta):\cU\times\cX\to \cW\times H\times\cY \,, \quad 
\bF'(\bu,\theta)= 
\left(\begin{array}{cc}
\ddt-f'_u(\cdot,\bu,\theta)&-f'_\theta(\cdot,\bu,\theta)\\
\delta_0 & -u_0'(\theta)\\
g'_u(\cdot,\bu,\theta)&g'_\theta(\cdot,\bu,\theta)
\end{array}\right).
\end{equation}
\end{proposition}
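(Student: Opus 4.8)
The plan is to split $\bF$ into parts that are affine and bounded linear---hence trivially differentiable---and the genuinely nonlinear Nemitskii parts, and to concentrate all the work on the latter. The maps $\bu\mapsto\dot{\bu}$ and $\bu\mapsto\bu(0)=\delta_0\bu$ are bounded linear from $\cU$ into $\cW$ and $H$ respectively (the second by the embedding \eqref{cUC0TH}), so they coincide with their own derivatives, and $u_0$ is differentiable by hypothesis. Thus it remains to treat the Nemitskii operators $(\bu,\theta)\mapsto f(\cdot,\bu,\theta)\in\cW$ and $(\bu,\theta)\mapsto g(\cdot,\bu,\theta)\in\cY$. Before addressing differentiability I would first check that the candidate derivative \eqref{bFprime} is a bounded linear operator $\cU\times\cX\to\cW\times H\times\cY$. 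Linearity is clear; for boundedness the two-part structure in \eqref{growthfuftheta}, \eqref{growthgugtheta} is exactly what is needed. For a direction $(\bv,\xi)\in\cU\times\cX$, the $f^1_u$-term is estimated in $V^*$ by $\phi^1_u(\|\bu(t)\|_H+\|\theta\|_\cX)\|\bv(t)\|_V$, whose square is integrable in $t$ because $\|\bu(t)\|_H$ is bounded uniformly in $t$ by \eqref{cUC0TH} (so $\phi^1_u(\cdots)$ is bounded) and $\bv\in L_2(0,T;V)$; the $f^2_u$- and $f'_\theta$-terms carry the extra factor $(1+\|\bu(t)\|_V)\in L_2(0,T)$ but act on $\|\bv(t)\|_H\le c_{\cU\to C(0,T;H)}\|\bv\|_\cU$ and on the constant-in-time $\|\xi\|_\cX$, so they again land in $L_2(0,T;V^*)$. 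The same bookkeeping gives $g'_u\bv+g'_\theta\xi\in\cY$.

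For G\^ateaux differentiability of the $f$-Nemitskii operator, fix $(\bu,\theta)$ and $(\bv,\xi)$ and consider the difference quotient $Q_s(t)=\tfrac1s\bigl(f(t,\bu(t)+s\bv(t),\theta+s\xi)-f(t,\bu(t),\theta)\bigr)$. Pointwise in $t$, the assumed G\^ateaux differentiability of $f(t,\cdot,\cdot)$ at $(\bu(t),\theta)$ in the direction $(\bv(t),\xi)$ gives $Q_s(t)\to f'_u(t,\bu(t),\theta)\bv(t)+f'_\theta(t,\bu(t),\theta)\xi$ in $V^*$ as $s\to0$. The crux is to upgrade this to convergence in $\cW=L_2(0,T;V^*)$, for which I would produce an $s$-independent $L_2(0,T)$ majorant of $\|Q_s(t)\|_{V^*}$ and invoke vector-valued dominated convergence. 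To get such a majorant cleanly, I would test against $v\in V$, $\|v\|_V\le1$: writing $\bu_\sigma(t)=\bu(t)+\sigma s\bv(t)$ and $\theta_\sigma=\theta+\sigma s\xi$, the scalar map $\sigma\mapsto\langle f(t,\bu_\sigma(t),\theta_\sigma),v\rangle$ is differentiable on $[0,1]$ by the G\^ateaux hypothesis applied at each point of the segment, so the scalar mean value theorem bounds $|\langle Q_s(t),v\rangle|$ by $\sup_{\sigma\in[0,1]}\bigl(\|f'_u(t,\bu_\sigma(t),\theta_\sigma)\bv(t)\|_{V^*}+\|f'_\theta(t,\bu_\sigma(t),\theta_\sigma)\xi\|_{V^*}\bigr)$. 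Taking the supremum over $v$ and then using \eqref{growthfuftheta} together with monotonicity of the bounding functions and the uniform-in-$t,\sigma$ control $\|\bu_\sigma(t)\|_H\le\|\bu\|_{C(0,T;H)}+\|\bv\|_{C(0,T;H)}$ (for $|s|\le1$) yields exactly the majorant already seen in the boundedness step, which lies in $L_2(0,T)$. Dominated convergence then gives $Q_s\to f'_u\bv+f'_\theta\xi$ in $\cW$, and the argument for $g$ into $\cY$ is identical with \eqref{growthgugtheta} in place of \eqref{growthfuftheta}.

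Finally I would assemble the three components: the first difference quotient equals $\dot{\bv}-Q_s$ and converges to $\dot{\bv}-(f'_u\bv+f'_\theta\xi)$ in $\cW$; the second equals $\bv(0)-\tfrac1s\bigl(u_0(\theta+s\xi)-u_0(\theta)\bigr)$ and tends to $\delta_0\bv-u_0'(\theta)\xi$ in $H$ by differentiability of $u_0$; and the third converges to $g'_u\bv+g'_\theta\xi$ in $\cY$. Together with the boundedness and linearity of the limiting operator established above, this is precisely G\^ateaux differentiability with derivative \eqref{bFprime}. I expect the domination step to be the only real obstacle: the interplay of the splitting $f=f^1+f^2$, the two different operator-norm bounds ($V\to V^*$ versus $H\to V^*$), and the embedding \eqref{cUC0TH} is exactly engineered so that every term possesses an $s$-uniform $L_2(0,T)$ majorant, while the testing-plus-mean-value device is what lets one apply the growth conditions along the whole segment rather than only at the base point. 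A secondary point of care is justifying the scalar mean value theorem from G\^ateaux (rather than Fr\'echet) differentiability, which rests on the differentiability of $f(t,\cdot,\cdot)$ at every point of the connecting segment.
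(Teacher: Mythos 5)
Your proposal is correct and takes essentially the same approach as the paper's proof: both reduce the problem to the Nemitskii operators induced by $f$ and $g$, bound the difference quotient uniformly along the connecting segment using the split growth conditions \eqref{growthfuftheta}, \eqref{growthgugtheta} together with the embedding \eqref{cUC0TH} (so that the increasing functions $\phi$, $\psi$ stay bounded and the remaining factors $1+\|\bu(t)\|_V+\|\bv(t)\|_V$ lie in $L_2(0,T)$), and conclude by Lebesgue's dominated convergence theorem in $\cW$ and $\cY$. The only cosmetic difference is that the paper writes the Taylor remainder as a Bochner integral of derivative differences along the segment, whereas you obtain the same $\eps$-uniform square-integrable majorant by duality testing plus the scalar mean value theorem; the point you flag at the end (passing from partial G\^ateaux derivatives to differentiability along the segment) is glossed over to exactly the same extent in the paper's own argument.
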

\begin{proof}
We show G\^{a}teaux differentiability of $f$. (The proof for $g$ is analogous and the differentiability of $(\bu,\theta)\mapsto \bu(0)-u_0(\theta)$ is obvious.)

For any $(\bv,\xi)\in\cU\times\cX$ we have 
\[
\hspace*{-2cm}
\frac{1}{\eps}\|f(\cdot,\bu+\eps\bv,\theta+\eps\xi)-f(\cdot,\bu,\theta)
-\eps f_u'(\cdot,\bu,\theta)\bv-\eps f_\theta'(\cdot,\bu,\theta)\xi\|_\cW\\
=\left(\int_0^T r_\eps(t)^2\, dt\right)^\frac12\,,
\]
where for all $\eps\in(0,\bar{\eps}]$ 
\begin{equation}\label{est:reps}
\hspace*{-2cm}
\eqalign{
r_\eps(t)=
\left\|
\int_0^1 \Bigl(
(f_u'(t,\bu(t)+\lambda\eps\bv(t),\theta+\lambda\eps\xi)-f_u'(t,\bu(t),\theta))\bv(t) 
\right.
\\ \hspace*{2cm}
\left.
+(f_\theta'(t,\bu(t)+\lambda\eps\bv(t),\theta+\lambda\eps\xi)-f_\theta'(t,\bu(t),\theta))\xi
\Bigr)\,d\lambda
\right\|_{V^*}\\
\leq 4\Bigl(\phi^1_u(\bar{s})\|\bv(t)\|_V
+\phi_{H,u}^2(\bar{s})(1+\|\bu(t)\|_V+\bar{\eps}\|\bv(t)\|_V)\|\bv(t)\|_H\\
\hspace*{2cm}+\phi_\theta(\bar{s})(1+\|\bu(t)\|_V+\bar{\eps}\|\bv(t)\|_V)\|\theta\|_{\cX}
\Bigr)\\
\leq \bar{c}\Bigl(1+\|\bv(t)\|_V+\|\bu(t)\|_V\Bigr)
=:\bar{r}(t),
}
\end{equation}
with $\bar{c}$ and 
\[
\eqalign{
\bar{s}=\|\bu(t)\|_H+\bar{\eps}\|\bv(t)\|_H+\|\theta\|_\cX+\bar{\eps}\|\xi\|_\cX\\
\leq c_{\cU\to C(0,T;H)}(\|\bu\|_{\cU}+\bar{\eps}\|\bv\|_{\cU})+\|\theta\|_\cX+\bar{\eps}\|\xi\|_\cX
}
\]
cf. \eqref{cUC0TH}, depending only on $\|\bu\|_{\cU}$, $\|\bv\|_{\cU}$, $\|\theta\|_\cX$, $\|\xi\|_\cX$ and $\bar{\eps}$ but not on $\eps$ nor $t$.
Hence, based on square integrability of $t\mapsto\|\bv(t)\|_V$, $t\mapsto\|\bu(t)\|_V$, the function $\bar{r}$ is square integrable as well.\\
On the other hand, by differentiability of $f$, we have $r_\eps(t)\to0$ for  almost  every $t\in(0,T)$. Thus Lebesgue's Dominated Convergence Theorem yields 
\[
\int_0^T r_\eps(t)^2\,dt \to 0 \mbox{ as }\eps\to0\,,
\]
thus G\^{a}teaux differentiability follows.
Linearity of the derivative is obvious and its continuity follows similarly to the above, by employing Lebesgue's Dominated Convergence Theorem.
\end{proof}

The Hilbert space adjoint of $\bF'(\bu,\theta)$ can be computed as follows.

\begin{proposition} \label{prop:adj_bF}
Let the assumptions of Proposition \ref{prop:deriv_bF} be satisfied. 
Then the Hilbert space adjoint of $\bF'(\bu,\theta)$ is given by
\begin{equation}\label{bFadj}
\eqalign{
\bF'(\bu,\theta)^\star:\cW\times H\times\cY\to\cU\times\cX \,, \\
\bF'(\bu,\theta)^\star= 
\left(\begin{array}{ccc}
(\ddt-f'_u(\cdot,\bu,\theta))^\star&\delta_0^\star&g'_u(\cdot,\bu,\theta)^\star\\
-f'_\theta(\cdot,\bu,\theta)^\star&-u_0'(\theta)^\star&g'_\theta(\cdot,\bu,\theta)^\star
\end{array}\right),
}
\end{equation}
where
\begin{eqnarray*}
(\ddt-f'_u(\cdot,\bu,\theta))^\star\bw=\bu^\bw\\
f'_\theta(\cdot,\bu,\theta)^\star\bw=\int_0^Tf'_\theta(t,\bu(t),\theta)^*I\bw(t)\, dt\\
\delta_0^\star h=\bu^h\\
u_0'(\theta)^\star=u_0'(\theta)^*\\
g'_u(\cdot,\bu,\theta)^\star\bz=\bu^\bz\\
g'_\theta(\cdot,\bu,\theta)^\star\bz=\int_0^Tg'_\theta(t,\bu(t),\theta)^*\bz(t)\, dt\,,
\end{eqnarray*}
$\bu^\bw$, $\bu^h$, $\bu^\bz$ solve (along with some adjoint states $\bp^\bw$, $\bp^\bz$)
\begin{equation}\label{bubw}
\hspace*{-2cm}
\begin{cases}{
\dot{\bu}^\bw(t)+D\bu^\bw(t)=\bw(t)+I^{-1}\bp^\bw(t)\quad t\in(0,T)\,, \quad\bu^\bw(0)=\bp^\bw(0)\\
-\dot{\bp}^\bw(t)+D^*\bp^\bw(t)=-(D^*+f_u'(t,\bu(t),\theta)^*)I\bw(t)\quad t\in(0,T)\,, \quad\bp^\bw(T)=0
}\end{cases}
\end{equation}
\begin{equation}\label{bubz}
\hspace*{-2cm}
\begin{cases}{
\dot{\bu}^\bz(t)+D\bu^\bz(t)=I^{-1}\bp^\bz(t)\quad t\in(0,T)\,, \quad\bu^\bz(0)=\bp^\bz(0)\\
-\dot{\bp}^\bz(t)+D^*\bp^\bz(t)=g_u'(t,\bu(t),\theta)^*\bz(t)\quad t\in(0,T)\,, \quad\bp^\bz(T)=0
}\end{cases}
\end{equation}
\begin{equation}\label{buh}
\hspace*{-2cm}
\dot{\bu}^h(t)+D\bu^h(t)=0\quad t\in(0,T)\,, \quad\bu^h(0)=h\,,
\end{equation}
and 
\[
\eqalign{
f'_u(t,\bu(t),\theta)^*:V^{**}=V\to V^*, \quad f'_\theta(t,\bu(t),\theta)^*:V^{**}=V\to \cX^*=\cX, \\ 
g'_u(t,\bu(t),\theta)^*:Z^*=Z\to V^*, \quad g'_\theta(t,\bu(t),\theta)^*:Z^*=Z\to \cX^*=\cX, \\ u_0'(\theta)^*:H^*=H\to\cX^*=\cX
}
\] 
are the respective Banach space adjoints.
\end{proposition}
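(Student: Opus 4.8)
The plan is to verify the claimed formula directly from the defining property of the Hilbert space adjoint, namely
\[
(\bF'(\bu,\theta)(\bv,\xi),(\bw,h,\bz))_{\cW\times H\times\cY}
=((\bv,\xi),\bF'(\bu,\theta)^\star(\bw,h,\bz))_{\cU\times\cX}
\]
for all $(\bv,\xi)\in\cU\times\cX$ and all $(\bw,h,\bz)\in\cW\times H\times\cY$. Since the adjoint of a block operator is the transpose of the blockwise adjoints, I would treat the six entries of \eqref{bFadj} separately, reading off each one by testing with $(\bv,0)$ or $(0,\xi)$ and, on the data side, with one nonzero component at a time. The three entries landing in $\cX$ together with $u_0'(\theta)^\star$ are the routine ones, while the three entries landing in $\cU$ are where the work lies, because of the nonstandard inner product \eqref{innprodU}.

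For the $\cX$-valued blocks I would use that $\cX$, $Z$ and $H$ are identified with their duals. For $g'_\theta$ and $u_0'(\theta)$ the range inner products ($Z$ and $H$) are used directly, so that $(g'_\theta(t)\xi,\bz(t))_Z=(\xi,g'_\theta(t)^*\bz(t))_\cX$ and $(u_0'(\theta)\xi,h)_H=(\xi,u_0'(\theta)^*h)_\cX$ pointwise; integrating the former in $t$ gives the stated integral formula, while the latter gives $u_0'(\theta)^\star=u_0'(\theta)^*$. For $f'_\theta$, whose range is $\cW=L_2(0,T;V^*)$, the only extra ingredient is the Riesz relation \eqref{I}: writing $(f'_\theta(t)\xi,\bw(t))_{V^*}=\dup{f'_\theta(t)\xi}{I\bw(t)}{V}=(\xi,f'_\theta(t)^*I\bw(t))_\cX$ and integrating yields $f'_\theta(\cdot,\bu,\theta)^\star\bw=\int_0^Tf'_\theta(t)^*I\bw(t)\,dt$.

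The heart of the proof is the computation of $(\ddt-f'_u)^\star$, $\delta_0^\star$ and $g'_u{}^\star$, all mapping into $\cU$. The key idea is to represent the relevant linear functional of $\bv$ as a $\cU$-inner product by introducing an adjoint state $\bp$ and letting $\bu$ solve a forward equation $\dot\bu+D\bu=(\text{data})+I^{-1}\bp$, so that the factor $\dot\bu+D\bu$ appearing in \eqref{innprodU} becomes explicit. Concretely, for $g'_u{}^\star\bz$ I would start from $\int_0^T\dup{g'_u(t)^*\bz(t)}{\bv(t)}{V}\,dt$, set $\dot\bu^\bz+D\bu^\bz=I^{-1}\bp^\bz$ in \eqref{innprodU}, convert the $V^*$-inner product to the dual pairing via \eqref{I}, split off the $D$-part using \eqref{D}, and integrate the $\dup{\dot\bv}{\bp^\bz}{V}$-term by parts with \eqref{intbyparts}. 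Choosing the terminal condition $\bp^\bz(T)=0$ and the initial condition $\bu^\bz(0)=\bp^\bz(0)$ cancels the two boundary contributions, and matching the remaining integrand $\dup{-\dot\bp^\bz+D^*\bp^\bz}{\bv}{V}$ against $\dup{g'_u(t)^*\bz(t)}{\bv(t)}{V}$ for all $\bv$ forces the backward equation in \eqref{bubz}. The block $(\ddt-f'_u)^\star$ follows the same route, now keeping the extra data term $\bw$ (so that $\dot\bu^\bw+D\bu^\bw=\bw+I^{-1}\bp^\bw$) and the term $-\dup{f'_u(t)^*I\bw}{\bv}{V}$ stemming from the $-f'_u$ entry; using $D^*I=\mathrm{id}_{V^*}$ to rewrite $\bw=D^*I\bw$ then produces the source $-(D^*+f'_u(t)^*)I\bw$ in \eqref{bubw}. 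For $\delta_0^\star h$ the functional is simply $(\bv(0),h)_H$, which equals $(\bv,\bu^h)_\cU$ precisely when $\bu^h$ solves the homogeneous equation \eqref{buh} with $\bu^h(0)=h$, since then the integral in \eqref{innprodU} vanishes.

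The main obstacle, and the point requiring the most care, is the bookkeeping forced by the nonstandard $\cU$-inner product together with the two distinct Riesz isomorphisms $D:V\to V^*$ and $I:V^*\to V$: one must consistently track in which space each pairing lives, invoke $D=D^*$ and the mutual inverse relation $D^*I=\mathrm{id}_{V^*}$ at the right moments, and select the boundary conditions on $\bp$ and $\bu$ so that all boundary terms produced by \eqref{intbyparts} cancel. A final point to address is well-posedness of the coupled forward--backward systems \eqref{bubw}--\eqref{buh}, which is what guarantees that $\bu^\bw,\bu^\bz,\bu^h$ actually lie in $\cU$ and hence that the adjoint is genuinely $\cU\times\cX$-valued.
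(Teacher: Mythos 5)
Your proposal is correct and takes essentially the same approach as the paper's proof: a blockwise treatment of the adjoint identity using the inner product \eqref{innprodU}, the Riesz maps \eqref{D}, \eqref{I}, the integration-by-parts formula \eqref{intbyparts}, and exactly the boundary conditions $\bp(T)=0$, $\bu(0)=\bp(0)$ to cancel the boundary terms, with the $\theta$-blocks handled by exchanging time integration and inner products. The only (immaterial) difference is direction --- you derive the systems \eqref{bubw}--\eqref{buh} by matching terms while the paper posits them and verifies the adjoint identity; regarding your last concern, note these systems are triangular rather than genuinely coupled (solve backward for $\bp$, then forward for $\bu$), so well-posedness follows from the standard linear theory already invoked after \eqref{innprodU}.
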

\begin{proof}
The adjoints of the derivatives with respect to $\theta$ can be easily seen by the definition of $I$ and the fact that the Bochner integral allows to exchange the order of integration with respect to time and taking the inner product.
 
The adjoints of the derivatives with respect to $u$ can be established as follows, using \eqref{intbyparts}.\\
For any $\bw\in\cW$ and all $\bv\in\cU$
\[
\hspace*{-2cm}
\eqalign{
(\bu^\bw,\bv)_\cU=
\int_0^T (\dot{\bu}^\bw(t)+D\bu^\bw(t),\dot{\bv}(t)+D\bv(t))_{V^*}\, dt + (\bu^\bw(0),\bv(0))_H\\
=\int_0^T (\bw(t)+I^{-1}\bp^\bw(t),\dot{\bv}(t)+D\bv(t))_{V^*}\, dt + (\bp^\bw(0),\bv(0))_H\\
=\int_0^T (\bw(t),\dot{\bv}(t)+D\bv(t))_{V^*}\, dt 
+\int_0^T \dup{\dot{\bv}(t)+D\bv(t)}{\bp^\bw(t)}{V} \, dt
+ (\bp^\bw(0),\bv(0))_H\\
=\int_0^T (\bw(t),\dot{\bv}(t)+D\bv(t))_{V^*}\, dt 
+\int_0^T \dup{-\dot{\bp}^\bw(t)+D^*\bp^\bw(t)}{\bv(t)}{V} \, dt
\\
=\int_0^T (\bw(t),\dot{\bv}(t)+D\bv(t))_{V^*}\, dt 
-\int_0^T \dup{(D^*+f_u'(t,\bu(t),\theta)^*)I\bw(t)}{\bv(t)}{V} \, dt
\\
=\int_0^T (\bw(t),\dot{\bv}(t)-f_u'(t,\bu(t),\theta)\bv(t))_{V^*}\, dt 
=(\bw,(\ddt-f_u'(\cdot,\bu(t),\theta))\bv)_\cW\,.
}
\]
For any $\bz\in\cY$ and all $\bv\in\cU$
\[
\hspace*{-2cm}
\eqalign{
(\bu^\bz,\bv)_\cU=
\int_0^T (\dot{\bu}^\bz(t)+D\bu^\bz(t),\dot{\bv}(t)+D\bv(t))_{V^*}\, dt + (\bu^\bz(0),\bv(0))_H\\
=\int_0^T (I^{-1}\bp^\bz(t),\dot{\bv}(t)+D\bv(t))_{V^*}\, dt + (\bp^\bz(0),\bv(0))_H\\
=\int_0^T \dup{\dot{\bv}(t)+D\bv(t)}{\bp^\bz(t)}{V}\, dt + (\bp^\bz(0),\bv(0))_H\\
=\int_0^T \dup{-\dot{\bp}^\bz(t)+D^*\bp^\bz(t)}{\bv(t)}{V}\, dt \\
=\int_0^T \dup{g_u'(t,\bu(t),\theta)^*\bz(t)}{\bv(t)}{V}\, dt \\
=\int_0^T (g_u'(t,\bu(t),\theta)\bv(t),\bz(t))_Z\, dt 
=(g_u'(\cdot,\bu,\theta)\bv,\bz)_\cY\,.
}
\]
For any $h\in H$ and all  $\bv\in\cU$
\[
\hspace*{-2cm}
\eqalign{
(\bu^h,\bv)_\cU=
\int_0^T (\dot{\bu}^h(t)+D\bu^h(t),\dot{\bv}(t)+D\bv(t))_{V^*}\, dt + (\bu^h(0),\bv(0))_H\\
=(h,\bv(0))_H =(h,\delta_0\bv)_H\,.
}
\]
\end{proof}

\begin{remark}\label{rem:weakformulation}
The requirement of carrying out forward--backward solutions \eqref{bubz}, \eqref{bubw} for the evaluation of adjoints arises due to the strength of the norm on $\cU$. In order to avoid this, one might consider using the weak solution concept
\[
\eqalign{
\bu\in L^2(0,T;V)\cap C(0,T;H^{\mbox{\footnotesize weak}})\mbox{ and }
\forall \bv\in W^{1,2,2}(0,T;V,V^*)\, : \\
\hspace*{-2cm}
\int_0^T (\dup{\dot{\bv}(t)}{\bu(t)}{V}+\dup{f(t,\bu(t),\theta)}{\bv(t)}{V})\, dt 
-(\bv(T),\bu(T))_H
=-(\bv(0),u_0(\theta))_H
}
\]
for the model equation \eqref{mod}, which is motivated by the integration by parts formula \eqref{intbyparts}. However, this would just transfer the difficulties to the adjoints acting on $\cW$, which would then be the dual of $W^{1,2,2}(0,T;V,V^*)$.
Also, when using the solution space $L^2(0,T;V)\cap C(0,T;H^{\mbox{\footnotesize weak}})$ (where the superscript ${}^{\mbox{\footnotesize weak}}$ indicates the weak topology on $H$) we would be forced to leave the Hilbert space setting.
\end{remark}

\begin{remark}\label{rem:obsdiscr_aao}
In case of discrete measurements \eqref{obsdiscr}, the forward operator is given by
\begin{equation}\label{bFdiscr}
\bF:\cU\times\cX\to \cW\times H\times\widetilde{\cY} \,, \quad 
(\bu,\theta)\mapsto 
\left(\begin{array}{c}
\dot{\bu}-f(\cdot,\bu,\theta)\\
\bu(0)-u_0(\theta)\\
g_1(\cdot,\bu,\theta)\\
\vdots\\
g_n(\cdot,\bu,\theta)\\
\end{array}\right)\,,
\end{equation}
with $\cU,\cW$ as in \eqref{UWY}, 
\[
\widetilde{\cY}=Z^n\,,
\]  
where $f$, $g$ are assume to satisfy the growth conditions \eqref{growthf} and 
\begin{equation}
\forall i\in\{1,\ldots,n\},\ h\in H, \ \theta \in \cX \: \quad 
\|g_i(h,\theta)\|_{Z}\leq \psi_i(\|v\|_H+\|\theta\|_\cX) \label{growthgdiscr}
\end{equation}
with increasing functions $\psi_i$. The derivative and its adjoint are given by 
\begin{equation}\label{bFprimediscr}
\hspace*{-2cm}
\bF'(\bu,\theta):\cU\times\cX\to \cW\times H\times\widetilde{\cY} \,, \quad 
\bF'(\bu,\theta)= 
\left(\begin{array}{cc}
\ddt-f'_u(\cdot,\bu,\theta)&-f'_\theta(\cdot,\bu,\theta)\\
\delta_0 & u_0'(\theta)\\
g'_{1u}(\cdot,\bu,\theta)&g'_{1\theta}(\cdot,\bu,\theta)\\
\vdots&\vdots\\
g'_{nu}(\cdot,\bu,\theta)&g'_{n\theta}(\cdot,\bu,\theta)\\
\end{array}\right)
\end{equation}
\begin{equation}\label{bFadjdiscr}
\hspace*{-2cm}\eqalign{
\bF'(\bu,\theta)^\star:\cW\times H\times\tilde{\cY}\to\cU\times\cX \,, \\
\bF'(\bu,\theta)^\star= 
\left(\begin{array}{ccccc}
(\ddt-f'_u(\cdot,\bu,\theta))^\star&\delta_0^\star&g'_{1u}(\cdot,\bu,\theta)^\star&\cdots&g'_{nu}(\cdot,\bu,\theta)^\star\\
-f'_\theta(\cdot,\bu,\theta)^\star&-u_0'(\theta)^\star&g'_{1\theta}(\cdot,\bu,\theta)^\star&\cdots&g'_{n\theta}(\cdot,\bu,\theta)^\star
\end{array}\right)\,,
}
\end{equation}
where $g'_{i\theta}(\cdot,\bu,\theta)^\star\bz_i=g'_{i\theta}(\cdot,\bu,\theta)^*\bz_i$,
$\sum_{i=1}^n g'_{iu}(\cdot,\bu,\theta)^\star\bz_i=\bu^\bz$, with $\bu^\bz$ as in \eqref{bubz}, while the adjoint state $\bp^\bz$ in \eqref{bubz} has to be redefined by
\[
\hspace*{-1.5cm}
\eqalign{
\bp^\bz(t)=0 \quad t\in[t_n,T]\\
\mbox{For } i=n:-1:1 \\
\hspace*{1cm}-\dot{\bp}^\bz(t)+D^*\bp^\bz(t)=0
\quad t\in(t_{i-1},t_i)\,, \quad
\bp^\bz(t_i)=\lim_{t\to t_i^+} \bp^\bz(t_i)+{g_i}_u'(\bu(t_i),\theta)^*\bz_i\,,
}
\]
where $\bp^\bz$ is to be considered as a left sided continuous function at the observation points, i.e, 
$\bp^\bz(t_i)=\lim_{t\to t_i^-}\bp^\bz(t)$. 
\end{remark}

\subsection{All-at-once system}
For the Kaczmarz approach we need to rewrite the inverse problem as a system of operator equations
\begin{equation}\label{sysbF}
\forall j\in\{0,\ldots,m-1\} \, : \ \bF_j(\bu,\theta)=\bby_j
\end{equation}
where we use a subdivision of the time interval 
\begin{equation}\label{subintervals}
[0,T]=\bigcup_{j\in \{1,\ldots,m\}}[\tau_{j-1},\tau_j]
\end{equation}
with breakpoints $0=\tau_0<\tau_1<\cdots\tau_{m-1}<\tau_m=T$ 
(not to be mistaken with the observation points in the discrete measurement case \eqref{obsdiscr}, cf. Remark \ref{rem:obsdiscr_aao})
to define 
\begin{equation}\label{bFj}
\bF_j:\cU\times\cX\to \cW_j\times\cY_j \,, \quad 
(\bu,\theta)\mapsto 
\left(\begin{array}{c}
\left.\Bigl(\dot{\bu}-f(\cdot,\bu,\theta)\Bigr)\right|_{(\tau_j,\tau_{j+1})}\\
\left.g(\cdot,\bu,\theta)\right|_{(\tau_j,\tau_{j+1})}
\end{array}\right)
\end{equation}
for $j\in\{1,\ldots,m-1\}$,
\begin{equation}\label{bF0}
\bF_0:\cU\times\cX\to \cW_0\times H\times\cY_0 \,, \quad 
(\bu,\theta)\mapsto 
\left(\begin{array}{c}
\left.\Bigl(\dot{\bu}-f(\cdot,\bu,\theta)\Bigr)\right|_{(0,\tau_{1})}\\
\bu(0)-u_0(\theta)\\
\left.g(\cdot,\bu,\theta)\right|_{(0,\tau_{1})}
\end{array}\right)\,,
\end{equation}
with $\cU$ as in \eqref{UWY}, 
\begin{equation}\label{WjYj}
\cW_j=L_2(\tau_j,\tau_{j+1};V^*)\,, \quad \cY_j=L_2(\tau_j,\tau_{j+1};Z)\,, \quad j\in\{0,1,\ldots,m-1\}\,,
\end{equation}
and 
\begin{equation}\label{yj}
\eqalign{
\bby_j^\delta=(0,\by_j^\delta)\approx(0,\by_j)=\bby_j \,, \ j\in\{1,\ldots,m-1\}\,, \\
\bby_j^\delta=(0,0,\by_j^\delta)\approx(0,0,\by_j)=\bby_j \,, \\ 
\by_j^\delta=\left.\by^\delta\right|_{(\tau_j,\tau_{j+1})}\,, \ 
\by_j=\left.\by\right|_{(\tau_j,\tau_{j+1})}.
}
\end{equation}
Note that continuity of $\bu$ over the breakpoints $\tau_j$ is guaranteed by the embedding $\cU\subseteq C(0,T;H)$.

The derivative of $\bF_j$ is obviously defined as in \eqref{bFprime}, by applying restrictions to the subinterval $(\tau_j,\tau_{j+1})$ in the first and last row (while skipping the middle row for $j\geq1$). For defining the adjoint of $\bF_j$, it is readily checked that we just have to modify the definition of $\bu^\bz$, $\bu^\bw$ accordingly:
\[
\eqalign{
\bF_0'(\bu,\theta)^*= 
\left(\begin{array}{ccc}
((\ddt-f'_u(\cdot,\bu,\theta))\vert_{(0,\tau_1)})^\star&\delta_0^\star&(g'_u(\cdot,\bu,\theta)\vert_{(0,\tau_1)})^\star\\
-(f'_\theta(\cdot,\bu,\theta)\vert_{(0,\tau_1)})^\star&-u_0'(\theta)^\star&(g'_\theta(\cdot,\bu,\theta\vert_{(0,\tau_1)})^\star
\end{array}\right)
\\
\bF_j'(\bu,\theta)^*= 
\left(\begin{array}{cc}
((\ddt-f'_u(\cdot,\bu,\theta))\vert_{(\tau_j,\tau_{j+1})})^\star&(g'_u(\cdot,\bu,\theta)\vert_{(\tau_j,\tau_{j+1})})^\star\\
-(f'_\theta(\cdot,\bu,\theta)\vert_{(\tau_j,\tau_{j+1})})^\star&(g'_\theta(\cdot,\bu,\theta)\vert_{(\tau_j,\tau_{j+1})})^\star
\end{array}\right)\,,
}
\]
where
\begin{eqnarray*}
((\ddt-f'_u(\cdot,\bu,\theta))\vert_{(\tau_j,\tau_{j+1})})^\star\bw_j=\bu^\bw_j\\
(f'_\theta(\cdot,\bu,\theta)\vert_{(\tau_j,\tau_{j+1})})^\star\bw_j=\int_{\tau_j}^{\tau_{j+1}}f'_\theta(t,\bu(t),\theta)^*I\bw_j(t)\, dt\\
\delta_0^\star h=\bu^h\\
u_0'(\theta)^\star=u_0'(\theta)^*\\
(g'_u(\cdot,\bu,\theta)\vert_{(\tau_j,\tau_{j+1})})^\star\bz_j=\bu^\bz_j\\
(g'_\theta(\cdot,\bu,\theta)\vert_{(\tau_j,\tau_{j+1})})^\star\bz_j=\int_{\tau_j}^{\tau_{j+1}}g'_\theta(t,\bu(t),\theta)^*\bz_j(t)\, dt
\end{eqnarray*}
$\bu_j^\bw$, $\bu^h$, $\bu_j^\bz$ solve
\begin{equation}\label{bubwj}
\hspace*{-2cm}
\begin{cases}{
\dot{\bu}^\bw_j(t)+D\bu^\bw_j(t)=\chi_j(\bw_j)(t)+I^{-1}\bp^\bw_j(t)\quad t\in(0,T)\,, \quad \bu^\bw_j(0)=\bp^\bw_j(0)\\
-\dot{\bp}^\bw_j(t)+D^*\bp^\bw_j(t)=-\chi_j((D^*+f_u'(\cdot,\bu,\theta)^*)I\bw_j)(t)\quad t\in(0,T)\,, \quad\bp^\bw_j(T)=0\,,
}\end{cases}
\end{equation}
\begin{equation}\label{bubzj}
\hspace*{-2cm}
\begin{cases}{
\dot{\bu}^\bz_j(t)+D\bu^\bz_j(t)=I^{-1}\bp^\bz_j(t)\quad t\in(0,T)\,, \quad\bu^\bz_j(0)=\bp^\bz_j(0)\\
-\dot{\bp}^\bz_j(t)+D^*\bp^\bz_j(t)=\chi_j(g_u'(\cdot,\bu,\theta)^*\bz_j)(t)\quad t\in(0,T)\,, \quad\bp^\bz_j(T)=0
}\end{cases}
\end{equation}
\begin{equation}\label{buh0}
\hspace*{-2cm}
\dot{\bu}^h(t)+D\bu^h(t)=0\quad t\in(0,\tau_1)\,, \quad\bu^h(0)=h\,,
\end{equation}
with the extension by zero operator $\chi_j$ defined by
\begin{equation}\label{chij}
\chi_j(\phi)(t)=\left\{\begin{array}{ll} \phi(t)\mbox{ for }t\in (\tau_j,\tau_{j+1})\\
0\mbox{ else.}\end{array}\right.
\end{equation}

\subsection{Reduced formulation}
This setting is based on elimination of the model equation by means of the parameter-to-state map 
\begin{equation}\label{S}
\hspace*{-2cm}S:\cX\to\cU\, \quad \theta\mapsto \bu \mbox{ solving }\ 
\dot{\bu}(t)=f(t,\bu(t),\theta) \ t\in(0,T)\,, \quad\bu(0)=u_0(\theta)\,.
\end{equation}
To this end, one needs to impose certain conditions on $f$, for example those from the following proposition.
\begin{proposition}\label{prop:Swelldef}
Let $\theta\in\cX$. Assume that 
\begin{itemize}
\item
for almost all $t\in(0,T)$, the mapping $-f(t,\cdot,\theta)$ be pseudomonotone, i.e., $f(t,\cdot,\theta)$ is bounded and 
\[
\hspace*{-2.5cm}\left.\begin{array}{r}
u_k\rightharpoonup u\\
\liminf_{k\to\infty}\dup{f(t,u_k,\theta)}{u_k-u}{V}\geq0
\end{array}\right\}\ \Rightarrow \
\left\{\begin{array}{l}
\forall v\in V \, : \ \dup{f(t,u,\theta)}{u-v}{V}\\
\geq\limsup_{k\to\infty}\dup{f(t,u_k,\theta)}{u_k-v}{V}
\end{array}\right.
\]
\item $-f(\cdot,\cdot,\theta)$ is semi-coercive, i.e., 
\[
\hspace*{-2.3cm}
\forall v\in V\,\forall^{\mbox{\footnotesize{a.e.}}}t\in(0,T)\, : \ 
-\dup{f(t,v,\theta)}{v}{V}\geq c_0^\theta |v|_V^2-c_1^\theta(t)|v|_V-c_2^\theta(t)\|v(t)\|_H^2
\]
for some $c_0^\theta>0$, $c_1^\theta\in L^2(0,T)$, $c_2^\theta\in L^1(0,T)$, and some seminorm $|\cdot|_V$ satisfying 
\[
\hspace*{-2.3cm}
\forall v\in V\, : \ 
\|v\|_V\leq c_{|\cdot|} (|v|_V+\|v\|_H)
\]
for some $c_{|\cdot|}>0$,
\item $f$ satisfies the growth condition \eqref{growthf} as well as
\[
\hspace*{-2.3cm}
\forall u,v\in V\,\forall^{\mbox{\footnotesize{a.e.}}}t\in(0,T)\, : \  
\dup{f(t,u,\theta)-f(t,v,\theta)}{u-v}{V}\leq \rho^\theta(t)\|u-v\|_H^2
\]
for some $\rho^\theta\in L_1(0,T)$.
\end{itemize}
Then $S(\theta)\in\cU$ is well defined.

\end{proposition}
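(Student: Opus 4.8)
The plan is to prove existence and uniqueness of a solution $\bu\in\cU$ to the initial value problem in \eqref{S} by combining the Galerkin/pseudomonotonicity machinery for evolution equations with a Gronwall argument for uniqueness. First I would rewrite the problem in the abstract form $\dot{\bu}(t)+A(t,\bu(t))=0$, $\bu(0)=u_0(\theta)$, with $A(t,\cdot):=-f(t,\cdot,\theta):V\to V^*$. By the first assumption $A(t,\cdot)$ is bounded and pseudomonotone for a.e.\ $t$, by the second it is semi-coercive, and by the growth condition \eqref{growthf} together with the embedding \eqref{cUC0TH} its induced Nemitskii operator maps bounded subsets of $L_2(0,T;V)\cap C(0,T;H)$ into bounded subsets of $\cW=L_2(0,T;V^*)$; these are exactly the structural hypotheses of the existence theory for pseudomonotone, coercive evolution equations (see, e.g., \cite{Roubicek}).

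For existence I would use a Galerkin scheme. Since $V$ is separable, fix a Galerkin basis and solve, via Carath\'eodory's theorem for the resulting finite-dimensional systems, approximations $\bu_n$ with $\bu_n(0)$ the projection of $u_0(\theta)$. Testing the $n$-th system with $\bu_n$ itself turns the left-hand side into $\tfrac12\ddt\|\bu_n\|_H^2$, and the semi-coercivity inequality together with Young's inequality (to absorb the $c_1^\theta|\bu_n|_V$ term into $c_0^\theta|\bu_n|_V^2$) yields, after integration and Gronwall's lemma using $c_1^\theta\in L^2(0,T)$, $c_2^\theta\in L^1(0,T)$, a bound on $\bu_n$ in $L^\infty(0,T;H)$ and on $|\bu_n|_V$ in $L^2(0,T)$; the seminorm--norm estimate $\|v\|_V\le c_{|\cdot|}(|v|_V+\|v\|_H)$ then bounds $\bu_n$ in $L_2(0,T;V)$. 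The growth condition \eqref{growthf} bounds $f(\cdot,\bu_n,\theta)$, hence $\dot{\bu}_n$, in $\cW$, so $\bu_n$ is bounded in $\cU$; extracting a subsequence gives $\bu_n\rightharpoonup\bu$ in $L_2(0,T;V)$, $\dot{\bu}_n\rightharpoonup\dot{\bu}$ in $\cW$, and $f(\cdot,\bu_n,\theta)\rightharpoonup\chi$ in $\cW$.

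The main obstacle is identifying $\chi=f(\cdot,\bu,\theta)$, i.e.\ passing to the limit in the nonlinear term, which is where pseudomonotonicity is essential. I would establish the key inequality $\limsup_{n}\int_0^T\dup{A(t,\bu_n(t))}{\bu_n(t)-\bu(t)}{V}\,dt\le0$ by using the Galerkin identity to replace $\int_0^T\dup{A(t,\bu_n)}{\bu_n}{V}\,dt$ by $-\int_0^T\dup{\dot{\bu}_n}{\bu_n}{V}\,dt=-\tfrac12(\|\bu_n(T)\|_H^2-\|\bu_n(0)\|_H^2)$ via the integration-by-parts formula \eqref{intbyparts}, and then exploiting weak lower semicontinuity of $\bu\mapsto\|\bu(T)\|_H$ together with convergence of the initial values. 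Feeding this into the Bochner-space lift of the pointwise pseudomonotonicity assumption yields the identification $\chi=f(\cdot,\bu,\theta)$ together with convergence of the energies, whence the limit $\bu$ satisfies $\dot{\bu}+A(t,\bu)=0$ weakly; the initial condition $\bu(0)=u_0(\theta)$ is recovered from the embedding $\cU\hookrightarrow C(0,T;H)$ in \eqref{cUC0TH} and weak convergence. The delicate point is precisely this lifting of the a.e.-in-$t$ pseudomonotonicity of $f(t,\cdot,\theta)$ to the Bochner level for sequences bounded in $\cU$, which requires careful handling of the end-point term at $t=T$ and is the reason the boundedness and pseudomonotonicity hypotheses are imposed pointwise in time (no compactness of the embedding $V\hookrightarrow H$ being available).

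Uniqueness follows from the one-sided Lipschitz estimate. If $\bu_1,\bu_2\in\cU$ both solve \eqref{S}, set $\bw=\bu_1-\bu_2$, so that $\bw(0)=0$ and $\dot{\bw}=f(\cdot,\bu_1,\theta)-f(\cdot,\bu_2,\theta)$. Pairing with $\bw$ and using \eqref{intbyparts} gives $\tfrac12\ddt\|\bw(t)\|_H^2=\dup{f(t,\bu_1(t),\theta)-f(t,\bu_2(t),\theta)}{\bw(t)}{V}\le\rho^\theta(t)\|\bw(t)\|_H^2$ by the third assumption; since $\rho^\theta\in L_1(0,T)$ and $\|\bw(0)\|_H=0$, Gronwall's lemma forces $\bw\equiv0$. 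Thus $S(\theta)\in\cU$ is the unique solution, as claimed.
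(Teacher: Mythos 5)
Your proposal is correct and is essentially the paper's own approach: the paper proves this proposition simply by invoking \cite[Theorems 8.27, 8.31]{Roubicek}, and those theorems are established exactly by the Galerkin/pseudomonotonicity existence argument and the one-sided-Lipschitz/Gronwall uniqueness argument you reconstruct, with the hypotheses (pointwise pseudomonotonicity, semi-coercivity with the seminorm condition, the growth bound \eqref{growthf}, and $\rho^\theta\in L_1(0,T)$) playing precisely the roles you assign them. The only difference is that you unpack the citation; the delicate step you flag --- lifting the a.e.-in-$t$ pseudomonotonicity to the Bochner-space level --- is exactly what the cited results encapsulate.
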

\begin{proof}
\cite[Theorems 8.27, 8.31]{Roubicek}
\end{proof}

Therewith, under the conditions of Proposition \ref{prop:Swelldef}, as well as the growth condition \eqref{growthg} on $g$, the reduced forward operator  
\begin{equation}\label{F}
F:\cX\to \cY \,, \quad 
\theta\mapsto g(\cdot,S(\theta),\theta)
\end{equation}
with $\cY$ as in \eqref{UWY} is well-defined 
and the inverse problem of finding $(\bu^\dagger,\theta^\dagger)$ satisfying  \eqref{modex}, \eqref{obsex} from given perturbed data can be written as 
\begin{equation}\label{Fthetay}
F(\theta)=\by\approx\by^\delta \,,
\end{equation}
upon setting $\bu^\dagger=S(\theta^\dagger)$.

Differentiability of the forward operator is here mainly a question of differentiability of $S$, which relies on the sensitivity equation \eqref{sensitivityequation}.
\begin{proposition} \label{prop:deriv_F}
Let $S$ be well-defined by \eqref{S} and let $f$, $g$, besides being Caratheodory mappings for every fixed $\theta\in\cX$, be G\^{a}teaux differentiable with respect to their second and third argument, for almost all $t\in(0,T)$ and satisfy the growth conditions
\begin{eqnarray}
\|f_u'(t,v,\theta)\|_{V\to V^*}\leq c_{f,u} \,,
\label{fubounded}\\
\|f_\theta'(t,v,\theta)\|_{\cX\to V^*}\leq c_{f,\theta} (1+\|v\|_V)\,,
\label{fthetabounded}
\end{eqnarray}
and \eqref{growthgugtheta}.
Moreover, let also $u_0$ be G\^{a}teaux differentiable with derivative $u_0'$ uniformly bounded on bounded subsets of $\cX$.

Then $F$ as defined by \eqref{F} is G\^{a}teaux differentiable on $\cX$ and its derivative is given by
\begin{equation}\label{Fprime}
\hspace*{-2cm}{F}'(\theta):\cX\to \cY \,, \quad 
({F}'(\theta)\xi)(t)= 
g'_u(t,S(\theta)(t),\theta)\bu^\xi(t)+g'_\theta(t,S(\theta)(t),\theta)\xi\,,
\end{equation}
where $\bu^\xi=S'(\theta)\xi$ solves the sensitivity equation
\begin{equation}\label{sensitivityequation}
\hspace*{-2cm}\dot{\bu}^\xi(t)=f'_u(t,S(\theta)(t),\theta)\bu^\xi(t)+f'_\theta(t,S(\theta)(t),\theta)\xi \quad t\in(0,T)\,, \quad\bu^\xi(0)=u_0'(\theta)\xi\,.
\end{equation}
\end{proposition}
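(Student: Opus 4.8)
The plan is to reduce everything to Gâteaux differentiability of the parameter-to-state map $S$, since differentiability of $F=g(\cdot,S(\cdot),\cdot)$ then follows by the chain rule together with the differentiability of the Nemitskii operator induced by $g$, which has already been established by the argument of Proposition~\ref{prop:deriv_bF} under the growth condition \eqref{growthgugtheta}. Concretely, once $S'(\theta)\xi=\bu^\xi$ is available in $\cU$, formula \eqref{Fprime} is obtained by composing $g'_u(\cdot,S(\theta),\theta)$ and $g'_\theta(\cdot,S(\theta),\theta)$ with $\bu^\xi$ and $\xi$. So all the work lies in showing that $S$ is Gâteaux differentiable with derivative given through the sensitivity equation \eqref{sensitivityequation}.

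First I would verify that \eqref{sensitivityequation} is uniquely solvable in $\cU$ and that $\xi\mapsto\bu^\xi$ is linear and bounded. Rewriting it as $\dot{\bu}^\xi+D\bu^\xi=(f'_u(\cdot,S(\theta),\theta)+D)\bu^\xi+f'_\theta(\cdot,S(\theta),\theta)\xi$ exhibits the coercive leading operator $\ddt+D$ (whose well-posedness underlies the equivalent norm \eqref{innprodU}), perturbed by the zeroth-order term $f'_u$, which is bounded $V\to V^*$ by \eqref{fubounded}; the right-hand side lies in $\cW$ because $f'_\theta(\cdot,S(\theta),\theta)\xi$ is square integrable by \eqref{fthetabounded} and $S(\theta)\in L_2(0,T;V)$. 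Hence unique solvability and the bound $\|\bu^\xi\|_\cU\leq C\|\xi\|_\cX$ follow from standard linear parabolic theory, and the same argument supplies a \emph{uniform} stability estimate for linear equations of this type, i.e.\ $\|\bv\|_\cU\leq C(\|\bv(0)\|_H+\|\text{source}\|_\cW)$, whenever the zeroth-order coefficient is bounded $V\to V^*$ and satisfies a one-sided bound $\dup{(\cdot)\bv}{\bv}{V}\leq\rho(t)\|\bv\|_H^2$.

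Next, writing $\bu^\eps:=S(\theta+\eps\xi)$, $\bu:=S(\theta)$ and applying the fundamental theorem of calculus to the Gâteaux-differentiable $f$, I would derive that $r_\eps:=\tfrac1\eps(\bu^\eps-\bu)-\bu^\xi$ solves the linear evolution equation $\dot r_\eps=M_\eps r_\eps+R_\eps$ with $M_\eps(t)=\int_0^1 f'_u(t,(1-s)\bu+s\bu^\eps,\theta+s\eps\xi)\,ds$, vanishing-in-the-limit initial value $r_\eps(0)=\tfrac1\eps(u_0(\theta+\eps\xi)-u_0(\theta))-u_0'(\theta)\xi\to0$ in $H$ (Gâteaux differentiability of $u_0$), and source
\[
\eqalign{
R_\eps(t)=\int_0^1[f'_u(t,(1-s)\bu+s\bu^\eps,\theta+s\eps\xi)-f'_u(t,\bu,\theta)]\bu^\xi\,ds\\
\hspace*{1cm}+\int_0^1[f'_\theta(t,(1-s)\bu+s\bu^\eps,\theta+s\eps\xi)-f'_\theta(t,\bu,\theta)]\xi\,ds\,.
}
\]
The decisive feature of this decomposition is that $R_\eps$ contains the \emph{fixed} functions $\bu^\xi$ and $\xi$. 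Since $\|M_\eps\|_{V\to V^*}\leq c_{f,u}$ uniformly by \eqref{fubounded} and $M_\eps$ inherits the one-sided bound of Proposition~\ref{prop:Swelldef} (assumed locally uniform in $\theta$), the uniform stability estimate from the previous paragraph applies and reduces differentiability of $S$ to proving $\|R_\eps\|_\cW\to0$.

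The hard part will be this passage $\|R_\eps\|_\cW\to0$, which is delicate precisely because $f'_u$ carries no coercivity. Two ingredients are needed. The main obstacle is a local Lipschitz estimate for $S$ in the strong topology, $\|\bu^\eps-\bu\|_\cU\leq C\eps$: this is obtained by testing the equation for $\bu^\eps-\bu$ against itself, controlling $\dup{f(t,\bu^\eps,\theta+\eps\xi)-f(t,\bu,\theta)}{\bu^\eps-\bu}{V}$ through the one-sided Lipschitz estimate of Proposition~\ref{prop:Swelldef} (Gronwall in the $H$-norm) and then upgrading to the full $\cU$-norm via the coercive $\ddt+D$ structure, so that in particular $\bu^\eps\to\bu$ both in $C(0,T;H)$ and in $L_2(0,T;V)$. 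Granting continuity of $f'_u,f'_\theta$ in their $(u,\theta)$ arguments, the bracketed operator differences then tend to $0$ pointwise a.e.\ and are bounded; with the square-integrable majorant furnished by $\bu^\xi$, $\xi$ and — for the $f'_\theta$-term, whose bound \eqref{fthetabounded} involves $\|(1-s)\bu+s\bu^\eps\|_V$ — by the just-established $L_2(0,T;V)$ convergence of $\bu^\eps$, Lebesgue's Dominated Convergence Theorem (as in Proposition~\ref{prop:deriv_bF}) yields $R_\eps\to0$ in $\cW$. Combining this with the uniform stability estimate gives $\|r_\eps\|_\cU\to0$, i.e.\ $S'(\theta)\xi=\bu^\xi$, and the chain rule then delivers \eqref{Fprime}.
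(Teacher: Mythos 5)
Your proposal is correct in substance and shares the paper's overall skeleton --- reduce everything to G\^{a}teaux differentiability of $S$, write a linear evolution equation for the residual $r_\eps=\frac1\eps\bigl(S(\theta+\eps\xi)-S(\theta)\bigr)-\bu^\xi$, combine a stability estimate with convergence of the source, and finish the $g$-step by rerunning the argument of Proposition \ref{prop:deriv_bF} --- but it differs in two genuine ways. First, the decomposition: the paper freezes the coefficient at the base point, $\dot{\widetilde{\bv}}_\eps=f'_u(\cdot,S(\theta),\theta)\widetilde{\bv}_\eps+b_\eps$, so its source $b_\eps$ is the Taylor remainder of $f$ at the increments $(\eps\bv_\eps,\eps\xi)$ and thus involves the $\eps$-dependent difference quotients $\bv_\eps$; you keep the averaged coefficient $M_\eps$ in front of $r_\eps$, so your source $R_\eps$ consists of coefficient differences applied to the \emph{fixed} functions $\bu^\xi$ and $\xi$. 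Your variant buys a cleaner dominated-convergence step (a truly fixed majorant), but the price is that pointwise convergence $R_\eps(t)\to0$ now requires continuity of $f'_u,f'_\theta$ in $(u,\theta)$ together with strong convergence $S(\theta+\eps\xi)\to S(\theta)$ --- hypotheses you explicitly add (``granting continuity'', one-sided bounds ``assumed locally uniform in $\theta$'') but which are not among the proposition's stated assumptions; the paper's decomposition stays formally within those assumptions (G\^{a}teaux differentiability plus \eqref{fubounded}, \eqref{fthetabounded}, \eqref{growthgugtheta}), at the cost that its dominated-convergence argument with the moving directions $\bv_\eps$ is more delicate --- precisely why the paper stresses that the uniform bound \eqref{boundveps} is essential. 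Second, the key uniform bound: the paper bounds $\|\bv_\eps\|_\cU$ directly from the linear equation that $\bv_\eps$ itself satisfies, via the variation-of-constants stability estimate and absorption of the term $c\,c_{f,\theta}\,\eps\|\bv_\eps\|_\cU$ for $\eps\leq\bar{\eps}$, whereas you derive the equivalent Lipschitz estimate $\|S(\theta+\eps\xi)-S(\theta)\|_\cU\leq C\eps$ by energy/Gronwall arguments resting on the one-sided Lipschitz condition of Proposition \ref{prop:Swelldef}. Note that both routes tacitly invoke the same stability estimate for non-autonomous linear evolutions whose coefficient is merely bounded $V\to V^*$ (boundedness alone does not give parabolic well-posedness); here your write-up is the more candid one, since you state the required one-sided G\r{a}rding-type bound explicitly, while the paper encodes it in the formal variation-of-constants formulas. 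Finally, be aware that your closing ``chain rule'' is not a literal chain rule for G\^{a}teaux derivatives (these do not compose in general); like the paper, you must rerun the Proposition \ref{prop:deriv_bF} argument with the converging directions $\bv_\eps$, which your uniform bound makes legitimate.
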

\begin{proof}
For $\theta,\xi\in\cX$ fixed and and $\eps\in(0,1]$, the functions $\bv_\eps=\frac{1}{\eps}(S(\theta+\eps\xi)-S(\theta))$ and $\widetilde{\bv}_\eps=\frac{1}{\eps}(S(\theta+\eps\xi)-S(\theta)-\bu^{\eps\xi})$ solve
\[
\eqalign{
\dot{\bv}_\eps(t)=A_\eps(t)\bv_\eps(t)+B_\eps(t)\xi \quad t\in(0,T)\,, \quad\bv(0)=v_{\eps,0}\,,\\
\dot{\widetilde{\bv}}_\eps(t)=\tilde{A}(t)\widetilde{\bv}_\eps(t)+b_\eps(t) \quad t\in(0,T)\,, \quad\widetilde{\bv}(0)=\widetilde{v}_{\eps,0}\,,
}
\]
with
\[
\eqalign{
A_\eps(t)=\int_0^1 f_u'(t,\lambda S(\theta+\eps\xi)(t)+(1-\lambda)S(\theta)(t),\theta+\lambda\eps\xi)\, d\lambda\,,\\
B_\eps(t)=\int_0^1 f_\theta'(t,\lambda S(\theta+\eps\xi)(t)+(1-\lambda)S(\theta)(t),\theta+\lambda\eps\xi)\, d\lambda\,,\\
\tilde{A}(t)=f_u'(t,\bu(t),\theta)\,,\\
b_\eps(t)=\frac{1}{\eps}\Bigl(f(t,S(\theta)(t)+\eps\bv_\eps(t),\theta+\eps\xi)-f(t,S(\theta)(t),\theta)\,,\\
\hspace*{2.5cm}-\eps f_u'(t,S(\theta)(t),\theta)\bv_\eps(t)-\eps f_\theta'(t,S(\theta)(t),\theta)\xi\Bigr)\,,\\
v_{\eps,0}=\frac{1}{\eps}(u_0(\theta+\eps\xi)-u_0(\theta))\,, \ 
\widetilde{v}_{\eps,0}=\frac{1}{\eps}(u_0(\theta+\eps\xi)-u_0(\theta)-\eps u_0'(\theta)\xi)\,,
}
\]
hence 
\[
\hspace*{-2cm}
\eqalign{
\bv_\eps(t)=e^{\int_0^t A_\eps(\sigma)\, d\sigma}v_{\eps,0}
+\int_0^t B_\eps(\tau)\xi\,d\tau
+\int_0^te^{\int_s^t A_\eps(\sigma)\, d\sigma} A_\eps(s) \int_0^s B_\eps(\tau)\xi\,d\tau\, ds\\
\widetilde{\bv}_\eps(t)=e^{\int_0^t A_\eps(\sigma)\, d\sigma}\widetilde{v}_{\eps,0}
+\int_0^t b_\eps(\tau)\,d\tau
+\int_0^te^{\int_s^t{\tilde{A}}(\sigma)\, d\sigma}{\tilde{A}}(s) \int_0^s b_\eps(\tau)\,d\tau\, ds\,.
}
\]
Thus, by 
\[
\hspace*{-2cm}\eqalign{
\|A_\eps(t)\|_{V\to V^*}\leq c_{f,u}\,,\quad
\|{\tilde{A}}(t)\|_{V\to V^*}\leq c_{f,u}\\
\|B_\eps(t)\|_{\cX\to V^*} \leq c_{f,\theta} (1+\|S(\theta)(t)\|_V+\eps\|\bv_\eps(t)\|_V)
}
\]
(note that at this point it becomes evident that we might only generalize the uniform bound $c_{f,u}$ in \eqref{fubounded} to an at most logarithmically growing function $\phi_u(\|v\|_H+\|\theta\|_\cX)$ )
we can estimate 
\[
\hspace*{-2cm}
\|\bv_\eps\|_\cU\leq c  (\|v_{\eps,0}\|_H+\|B_\eps(\cdot)\xi\|_{L^2(0,T;V^*)})
\leq c (\|v_{\eps,0}\|_H+ c_{f,\theta}(1+\|S(\theta)\|_{\cU}+\eps\|\bv_\eps\|_{\cU}))
\]
for any $\eps\in(0,1]$, thus
\begin{equation}\label{boundveps}
\|\bv_\eps\|_\cU\leq  2 c (\sup_{\eps\in(0,\bar{\eps}]}\|u_0'(\theta+\eps\xi)\xi\|_H+c_{f,\theta}(1+\|S(\theta)\|_{\cU}))
\end{equation}
for any $\eps\in(0,\bar{\eps}]$ with $\bar{\eps}=\min\{1,\frac{1}{2 c c_{f,\theta}}\}$.
This allows to proceed analogously to the proof of Proposition \ref{prop:deriv_bF} to obtain
\[
\|\widetilde{\bv}_\eps\|_\cU\leq c \|b_\eps\|_{L^2(0,T;V^*)} = c (\|\tilde{v}_{\eps,0}\|_H+\|r_\eps\|_{L^2(0,T)})\to0\mbox{ as }\eps\to0\,,
\]
where $r_\eps$ is defined as in \eqref{est:reps} with $\bu$ replaced by $S(\theta)$ and $\bv$ by $\bv_\eps$ (whose uniform boundedness in $\cU$ \eqref{boundveps} is therefore essential).

Having proven G\^{a}teaux differentiability of $S$ one can proceed analogously to the proof of Proposition \ref{prop:deriv_bF} (with $f$  replaced by $g$) to show that $F$ is G\^{a}teaux differentiable with derivative given by \eqref{Fprime}.
\end{proof}

\begin{proposition} \label{prop:adj_F}
Let the assumptions of Proposition \ref{prop:deriv_F} be satisfied. 
Then the Hilbert space adjoint of ${F}'(\theta)$ is given by
\begin{eqnarray}
\hspace*{-2cm}
{F}'(\theta)^\star:\cY\to\cX\,, \label{Fadj}\\
\hspace*{-2cm}
{F}'(\theta)^\star\bz=\int_0^T (g'_\theta(t,S(\theta)(t),\theta)^*\bz(t)+f'_\theta(t,S(\theta)(t),\theta)^*\bp^\bz(t))\,dt+u_0'(\theta)^*\bp^\bz(0)\,,\nonumber
\end{eqnarray}
where $\bp^\bz$ solves
\[
\hspace*{-2cm}-\dot{\bp}^\bz(t)=f'_u(t,S(\theta)(t),\theta)^*\bp^\bz(t)+g_u'(t,S(\theta)(t),\theta)^*\bz(t)\quad t\in(0,T)\,, \quad\bp^\bz(T)=0\,.
\]
\end{proposition}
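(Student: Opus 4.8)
The plan is to verify the claimed formula directly from the defining relation of the Hilbert space adjoint, namely $(F'(\theta)\xi,\bz)_\cY=(\xi,F'(\theta)^\star\bz)_\cX$ for arbitrary $\xi\in\cX$ and $\bz\in\cY$, and to read off $F'(\theta)^\star\bz$ by manipulating the left-hand side into an inner product with $\xi$. Using the expression \eqref{Fprime} for the derivative and splitting according to its two summands, I would first write
\[
(F'(\theta)\xi,\bz)_\cY=\int_0^T (g'_u(t,S(\theta)(t),\theta)\bu^\xi(t),\bz(t))_Z\, dt+\int_0^T (g'_\theta(t,S(\theta)(t),\theta)\xi,\bz(t))_Z\, dt\,,
\]
where $\bu^\xi=S'(\theta)\xi$ solves the sensitivity equation \eqref{sensitivityequation}. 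The second term is immediate: passing to the Banach space adjoint $g'_\theta(t,S(\theta)(t),\theta)^*:Z\to\cX$ and exchanging the time integral with the inner product (as the Bochner integral permits) turns it into $(\xi,\int_0^T g'_\theta(t,S(\theta)(t),\theta)^*\bz(t)\,dt)_\cX$, which already accounts for the first summand in the asserted formula.

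The substantive work is the first integral, and the main idea is to eliminate the unknown $\bu^\xi$ via the adjoint state $\bp^\bz$. I would rewrite $(g'_u\bu^\xi,\bz)_Z=\dup{g'_u(t,S(\theta)(t),\theta)^*\bz(t)}{\bu^\xi(t)}{V}$ and then use the defining backward equation for $\bp^\bz$ in the form $g'_u(t,S(\theta)(t),\theta)^*\bz(t)=-\dot{\bp}^\bz(t)-f'_u(t,S(\theta)(t),\theta)^*\bp^\bz(t)$. The term $-\int_0^T\dup{\dot{\bp}^\bz(t)}{\bu^\xi(t)}{V}\,dt$ is handled by the integration by parts formula \eqref{intbyparts} applied to the pair $(\bp^\bz,\bu^\xi)$: since $\bp^\bz(T)=0$ and $\bu^\xi(0)=u_0'(\theta)\xi$, it produces the boundary contribution $(\bp^\bz(0),u_0'(\theta)\xi)_H$ together with $+\int_0^T\dup{\dot{\bu}^\xi(t)}{\bp^\bz(t)}{V}\,dt$. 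Inserting the sensitivity equation \eqref{sensitivityequation} for $\dot{\bu}^\xi$ gives a term $\dup{f'_u(t,S(\theta)(t),\theta)\bu^\xi(t)}{\bp^\bz(t)}{V}$ which, by the definition of the Banach space adjoint $f'_u(\cdot)^*$ (and the reflexive identification $V^{**}=V$), exactly cancels the remaining $-\int_0^T\dup{f'_u(t,S(\theta)(t),\theta)^*\bp^\bz(t)}{\bu^\xi(t)}{V}\,dt$. What survives is $(\bp^\bz(0),u_0'(\theta)\xi)_H+\int_0^T\dup{f'_\theta(t,S(\theta)(t),\theta)\xi}{\bp^\bz(t)}{V}\,dt$; moving to the adjoints $u_0'(\theta)^*:H\to\cX$ and $f'_\theta(\cdot)^*:V\to\cX$ and collecting all three surviving contributions yields precisely the asserted formula for $F'(\theta)^\star\bz$.

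The step requiring the most care — and the one I would treat as the genuine obstacle rather than as bookkeeping — is justifying that the integration by parts formula \eqref{intbyparts} may be invoked at all, i.e.\ that the adjoint state $\bp^\bz$ indeed lies in $\cU=W^{1,2,2}(0,T;V,V^*)$ so that the boundary values $\bp^\bz(0)$, $\bp^\bz(T)$ make sense in $H$ via the embedding \eqref{cUC0TH}. Since the adjoint equation is a terminal-value problem, I would reverse time via $\bq(t):=\bp^\bz(T-t)$ to recast it as a forward linear evolution equation with coefficient operator $f'_u(T-t,S(\theta)(T-t),\theta)^*$ and right-hand side in $\cW$, and then invoke well-posedness as in the forward case, relying on the uniform bound \eqref{fubounded} on $f'_u$ (which controls the coefficient operator uniformly in $t$) to place the solution in $\cU$. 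Beyond this regularity point, the remaining caution is purely notational: consistently tracking the $\langle\cdot,\cdot\rangle_{V^*,V}$ pairings against the identifications of $\cX$, $Z$, $H$ with their duals, so that each transpose acts between the spaces indicated in the statement.
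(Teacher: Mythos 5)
Your proposal is correct and follows essentially the same route as the paper's proof: both verify the defining adjoint relation by combining the sensitivity equation \eqref{sensitivityequation}, the backward adjoint equation, and the integration by parts formula \eqref{intbyparts}, merely running the chain of equalities in opposite directions (you start from $(F'(\theta)\xi,\bz)_\cY$ and reduce to the claimed formula, while the paper starts from the claimed formula paired with $\xi$ and recovers $(F'(\theta)\xi,\bz)_\cY$). Your extra step of justifying that $\bp^\bz\in\cU$ --- via time reversal and well-posedness of the resulting linear forward evolution under the uniform bound \eqref{fubounded} --- addresses a regularity point the paper passes over silently, and is a welcome strengthening rather than a genuine divergence in method.
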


\begin{proof}
For any $\bz\in\cY$ and all $\xi\in\cX$, using the interchangeability of time integral and inner product for Bochner spaces, we get for ${F}'(\theta)^\star\bz$ as defined in \eqref{Fadj},
\[
\hspace*{-2.5cm}\eqalign{
({F}'(\theta)^\star\bz,\xi)_\cX\\
=\int_0^T \Bigl(
(g'_\theta(t,S(\theta)(t),\theta)^*\bz(t),\xi)_\cX+(f'_\theta(t,S(\theta)(t),\theta)^*\bp^\bz(t),\xi)_\cX\Bigr)\,dt+(u_0'(\theta)^*\bp^\bz(0),\xi)_\cX\,,
}
\]
where
\[
(g'_\theta(t,S(\theta)(t),\theta)^*\bz(t),\xi)_\cX
= (g'_\theta(t,S(\theta)(t),\theta)\xi,\bz(t))_Z\,,
\]
and, due to \eqref{intbyparts},
\[
\eqalign{
\int_0^T 
\dup{f'_\theta(t,S(\theta)(t),\theta)\xi}{\bp^\bz(t)}{V}\,dt+(u_0'(\theta)\xi,\bp^\bz(0))_H\\
=\int_0^T 
\dup{\dot{\bu}^\bz(t)-f'_u(t,S(\theta)(t),\theta)\bu^\bz}{\bp^\bz(t)}{V}\,dt+(\bu^\bz(0),\bp^\bz(0))_H\\
=\int_0^T 
\dup{-\dot{\bp}^\bz(t)-f'_u(t,S(\theta)(t),\theta)^*\bp^\bz}{\bu^\bz(t)}{V}\,dt\\
=\int_0^T 
\dup{g_u'(t,S(\theta)(t),\theta)^*\bz(t)}{\bu^\bz(t)}{V}\,dt\\
=\int_0^T 
(g_u'(t,S(\theta)(t),\theta)\bu^\bz(t),\bz(t))_Z\,dt\,,
}
\]
hence altogether
\[
\eqalign{
({F}'(\theta)^\star\bz,\xi)_\cX
=\int_0^T(g'_\theta(t,S(\theta)(t),\theta)\xi+g_u'(t,S(\theta)(t),\theta)\bu^\bz(t),\bz(t))_Z\,dt\\
=({F}'(\theta)\xi,\bz)_\cY
}
\]
\end{proof}

\begin{remark}\label{rem:obsdiscr_red}
Similarly to Remark \ref{rem:obsdiscr_aao}, in case of discrete measurements \eqref{obsdiscr}, the adjoint state $\bp^\bz$ has to be redefined by 
\[
\eqalign{
\bp^\bz(t)=0 \quad t\in[t_n,T]\\
\mbox{For } i=n:-1:1 \\
\hspace*{1cm}-\dot{\bp}^\bz(t)=f'_u(t,S(\theta)(t),\theta)^*\bp^\bz(t)
\quad t\in(t_{i-1},t_i)\,, \\
\hspace*{3cm}
\bp^\bz(t_i)=\lim_{t\to t_i^+} \bp^\bz(t_i)+{g_i}_u'(S(\theta)(t_i),\theta)^*\bz_i\,,
}
\]
where again $\bp^\bz$ is a left sided continuous function at the observation points. 
\end{remark}

\subsection{Reduced system}
Using the subdivision \eqref{subintervals}, we can again replace \eqref{Fthetay} by a system of $m$ equations
\begin{equation}\label{sysF}
\forall j\in\{0,\ldots,m-1\} \, : \ F_j(\theta)=\by_j\,,
\end{equation}
where  
\begin{equation}\label{Fj}
F_j:\cX\to \cY_j \,, \quad 
\theta\mapsto 
\left.g(\cdot,S(\theta),\theta)\right|_{(\tau_j,\tau_{j+1})}
\end{equation}
for $j\in\{0,\ldots,m-1\}$,
with $\cY_j$ as in \eqref{WjYj} and $\by_j^\delta$, $\by_j$ as in \eqref{yj}.

The derivative of $F_j$ is defined by restriction to $(\tau_j,\tau_{j+1})$ in \eqref{Fprime}, while the sensitivity equation \eqref{sensitivityequation} still has to be solved on the whole time interval $(0,T)$.
The Hilbert space adjoint of $F_j'(\theta)$ is given by $F_j'(\theta)^\star:\cY_j\to\cX$ 
\[
\hspace*{-2cm}
F_j'(\theta)^\star\bz_j=\int_{\tau_j}^{\tau_{j+1}} g'_\theta(t,S(\theta)(t),\theta)^*\bz_j(t)\, dt+ \int_0^Tf'_\theta(t,S(\theta)(t),\theta)^*\bp^\bz_j(t))\,dt+u_0'(\theta)^*\bp^\bz_j(0)\,,
\]
where $\bp^\bz_j$ solves
\[
\hspace*{-2cm}
-\dot{\bp}^\bz(t)=f'_u(t,S(\theta)(t),\theta)^*\bp^\bz(t)+\chi_j(g_u'(\cdot,S(\theta),\theta)^*\bz_j)(t)\quad t\in(0,T)\,, \quad\bp^\bz(T)=0\,.
\]

\section{Formulation and comparison of iterative regularization methods} \label{sec:methods}

In this subsection, we will first of all make explicit what one step of 
\begin{itemize}
\item the all-at-once Landweber method (aLW),
\[
(\bu_{k+1},\theta_{k+1})
=(\bu_k,\theta_k)-\mu_k \bF'(\bu_k,\theta_k)^\star(\bF(\bu_k,\theta_k)-\bby^\delta)
\]
\item the all-at-once Landweber-Kaczmarz method (aLWK),
\[
(\bu_{k+1},\theta_{k+1})
=(\bu_k,\theta_k)-\mu_k \bF_{j(k)}'(\bu_k,\theta_k)^\star(\bF_{j(k)}(\bu_k,\theta_k)-\bby_{j(k)}^\delta)
\]
\item the all-at-once iteratively regularized Gauss-Newton method (aIRGNM),
\[
\eqalign{
\bF'(\bu_k,\theta_k)^\star\Bigl(\bF'(\bu_k,\theta_k)((\bu_{k+1},\theta_{k+1})-(\bu_k,\theta_k))+\bF(\bu_k,\theta_k)-\bby^\delta\Bigr)\\
\hspace*{6.5cm}+\alpha_k ((\bu_{k+1},\theta_{k+1})-(\bar{\bu},\bar{\theta}))=0
}
\]
\item the reduced Landweber method (rLW),
\[
\theta_{k+1}
=\theta_k-\mu_k F'(\theta_k)^\star(F(\theta_k)-\by^\delta)
\]
\item the reduced Landweber-Kaczmarz method (rLWK),
\[
\theta_{k+1}
=\theta_k-\mu_k F_{j(k)}'(\theta_k)^\star(F_{j(k)}(\theta_k)-\by_{j(k)}^\delta)
\]
\item the reduced iteratively regularized Gauss-Newton method (rIRGNM),
\[
F'(\theta_k)^\star\Bigl(F'(\theta_k)(\theta_{k+1}-\theta_k)+F(\theta_k)-\by^\delta\Bigr)
+\alpha_k (\theta_{k+1}-\bar{\theta})=0
\]
\end{itemize}
means in the context of the dynamical state space system \eqref{mod}, \eqref{obs}.
Here 
\begin{itemize}
\item[$\circ$] $\mu_k\in(0,\frac{1}{\|\bF'(\bu_k,\theta_k)\|^2}]$ is a stepsize parameter, which can be chosen constant if $\|\bF'(\bu,\theta)\|$ is uniformly bounded, 
\item[$\circ$] $\alpha_k$ a regularization parameter, which can be chosen a priori as $\alpha_k=\alpha_0 q^k$ for some $q\in(0,1)$, 
\item[$\circ$] $(\bar{\bu},\bar{\theta})$ an a priori guess, and 
\item[$\circ$] $j(k)=k-p[\frac{k}{p}]$.
\end{itemize}

To this end, note that the evolutions involving $D$ and $D^*$ in \eqref{bubw}--\eqref{buh} 
\[
\eqalign{
\dot{\bv}(t)+D\bv(t)=\widetilde{\bw}(t)\quad t\in(0,T)\,, \quad\bv(0)=v_0\\
-\dot{\bp}(t)+D^*\bp(t)=\widetilde{\widetilde{\bw}}(t)\quad t\in(0,T)\,, \quad\bp(T)=p_T
}
\]
are usually much easier to solve than the nonlinear and linear ones driven by $f$ and its derivative. We emphasize this fact also notionally, by using the variation of constants formulas
\begin{equation}\label{varofconst}
\eqalign{
\bv(t)=e^{-tD}v_0+\int_0^t e^{(s-t)D}\widetilde{\bw}(s)\,ds\,, \\
\bp(t)=e^{(t-T)D^*}p_T+\int_t^T e^{(t-s)D^*}\widetilde{\widetilde{\bw}}(s)\,ds
}
\end{equation}

\subsection{all-at-once Landweber}\label{sec:aaoLW}
Set \quad $\bw_k(t)=\dot{\bu}_k(t)-f(t,{\bu}_k(t),\theta_k)$, \\
\hspace*{1cm} $h_k=\bu_k(0)-u_0(\theta_k)$,\\
\hspace*{1cm} $\bz_k(t)=g(t,{\bu}_k(t),\theta_k)-\by^\delta(t)$ \quad
(so that $(\bw_k,h_k,\bz_k)=\bF(\bu_k,\theta_k)-\bby^\delta$) \\
Evaluate the adjoint state
\[
\hspace*{-2cm}
\bp_k(t)=\int_t^T e^{(t-s)D^*}\Bigl(-(D^*+f_u'(s,\bu_k(s),\theta_k)^*)I\bw_k(s)+g_u'(s,\bu_k(s),\theta_k)^*\bz_k(s)\Bigr)\,ds
\]
and the direct state
\[
\hspace*{-2cm}
\bv_k(t)=e^{-tD}(\bp_k(0)+h_k)+\int_0^t e^{(s-t)D}(\bw_k(s)+I^{-1}\bp_k(s))\,ds
\] 
Set \quad $\bu_{k+1}=\bu_k-\mu_k\bv_k$,\\
\hspace*{1cm} $\theta_{k+1}=\theta_k-\mu_k\Bigl(\int_0^T(-f'_\theta(t,\bu_k(t),\theta_k)^*I\bw_k(t)+g'_\theta(t,\bu_k(t),\theta_k)^*\bz_k(t))\, dt-u_0'(\theta_k)^*h_k\Bigr)$.

\subsection{all-at-once Landweber-Kaczmarz}\label{sec:aaoLWK}
Set \quad $\bw_k(t)=\left.\Bigr(\dot{\bu}_k(t)-f(t,{\bu}_k(t),\theta_k)\Bigr)\right|_{(\tau_{j(k)},\tau_{j(k)+1})}$,\\[-0.3ex] 
\hspace*{1cm} $h_k=\bu_k(0)-u_0(\theta_k)$,\\
\hspace*{1cm} $\bz_k(t)=\left.\Bigl(g(t,{\bu}_k(t),\theta_k)-\by^\delta(t)\Bigr)\right|_{(\tau_{j(k)},\tau_{j(k)+1})}$,\\[-0.3ex]
(so that $(\bw_k,h_k,\bz_k)=\bF_{j(k)}(\bu_k,\theta_k)-\bby^\delta_{j(k)}$).\\
Evaluate the adjoint state
\[
\hspace*{-4cm}
\bp_k(t)=
\left\{\begin{array}{ll}
0 &\mbox{ for }t\geq\tau_{j(k)+1}\\
\int_t^{\tau_{j(k)+1}} e^{(t-s)D^*}
\Bigl(
-(D^*+f_u'(\cdot,\bu_k,\theta_k)^*I\bw_k+g_u'(\cdot,\bu_k,\theta_k)^*\bz_k
\Bigr)(s)\,ds & \mbox{ for }\tau_{j(k)}<t<\tau_{j(k)+1}\\
\bp_k(\tau_{j(k)}) & \mbox{ for }t<\tau_{j(k)}
\end{array}\right.
\]
and the direct state
\[
\hspace*{-2cm}
\bv_k(t)=e^{-tD}(\bp_k(0)+h_k)+\int_0^t e^{(s-t)D}(\chi_{j(k)}(\bw_k)(s)+I^{-1}\bp_k(s))\,ds
\] 
Set \quad $\bu_{k+1}=\bu_k-\mu_k\bv_k$,\\
\hspace*{1cm} $\theta_{k+1}=\theta_k-\mu_k\Bigl(\int_{\tau_{j(k)}}^{\tau_{j(k)+1}}(-f'_\theta(t,\bu_k(t),\theta_k)^*I\bw_k(t)+g'_\theta(t,\bu_k(t),\theta_k)^*\bz_k(t))\, dt-u_0'(\theta_k)^*h_k\Bigr)$.

\subsection{all-at-once iteratively regularized Gauss-Newton}\label{sec:aaoIRGNM}
Solve the coupled linear system
\begin{equation}\label{aaoIRGNM}
\hspace*{-2cm}
\eqalign{
\bw_k(t)=\dot{\bu}_k(t)-f(t,{\bu}_k(t),\theta_k)-f_u'(t,{\bu}_k(t),\theta_k)({\bu}_{k+1}(t)-{\bu}_k(t))-f_\theta'(t,{\bu}_k(t),\theta_k)(\theta_{k+1}-\theta_k)\\
h_k=\bu_k(0)-u_0(\theta_k)-u_0'(\theta_k)(\theta_{k+1}-\theta_k)\\
\bz_k(t)=g(t,{\bu}_k(t),\theta_k)+g_u'(t,{\bu}_k(t),\theta_k)({\bu}_{k+1}(t)-{\bu}_k(t))+g_\theta'(t,{\bu}_k(t),\theta_k)(\theta_{k+1}-\theta_k)-\by^\delta(t)\\
\bp_k(t)=\int_t^Te^{(t-s)D^*}\Bigl(-(D^*+f_u'(s,\bu_k(s),\theta_k)^*I\bw_k(s)+g_u'(s,\bu_k(s),\theta_k)^*\bz_k(s)\Bigr)\, ds\\
\alpha_k({\bu}_{k+1}(t)-\bar{\bu}(t))+e^{-tD}(\bp(0)+h_k)+\int_0^te^{(s-t)D}(\bw_k(s)+I^{-1}\bp_k(s))\,ds=0\\
\alpha_k(\theta_{k+1}-\bar{\theta})+
\int_0^T(-f'_\theta(t,\bu_k(t),\theta_k)^*I\bw_k(t)+g'_\theta(t,\bu_k(t),\theta_k)^*\bz_k(t))\, dt-u_0'(\theta_k)^*h_k=0
}
\end{equation}
for $(\bw_k,h_k,\bz_k,\bp_k,\bu_{k+1},\theta_{k+1})\in\cW\times H\times\cY\times\cU\times\cU\times\cX$.
\\[1ex]
(Note that here
$(\bw_k,h_k,\bz_k)=\bF(\bu_k,\theta_k)+\bF(\bu_k,\theta_k)({\bu}_{k+1}-\bar{\bu},\theta_{k+1}-\bar{\theta})-\bby^\delta$.)

\begin{remark}
With the operators
\[ 
\eqalign{
\cI^D:\cW\to\cU\,, \quad (\cI^D\bw)(t)= \int_0^te^{(s-t)D}\bw(s)\, ds\\
\cI_k^{f,D^*}:\cW\to\cU\,, \quad (\cI^{f,D^*}\bw)(t)= \int_t^Te^{(t-s)D^*}(D^*+f_u'(s,\bu_k(s),\theta_k)^*I\bw(s)\, ds\\
\cI_k^{g,D^*}:\cY\to\cU\,, \quad (\cI^{g,D^*}\bz)(t)= \int_t^Te^{(t-s)D^*}g_u'(s,\bu_k(s),\theta_k)^*\bz(s)\, ds\\
\cI_k^{f}:\cW\to\cX\,, \quad (\cI^{f}\bw)(t)= \int_0^Tf_\theta'(t,\bu_k(t),\theta_k)^*I\bw(t)\, dt\\
\cI_k^{g}:\cY\to\cX\,, \quad (\cI^{g}\bz)(t)= \int_0^Tg_\theta'(t,\bu_k(t),\theta_k)^*\bz(t)\, dt\\
K_k^f:\cU\to\cW\,, \quad K_k^f=f_u'(\cdot,\bu_k,\theta_k)\\
K_k^g:\cU\to\cY\,, \quad K_k^g=g_u'(\cdot,\bu_k,\theta_k)\\
L_k^f:\cX\to \cW\,, \quad L_k^f=f_\theta'(\cdot,\bu_k,\theta_k)\\
L_k^g:\cX\to \cY\,, \quad L_k^g=g_\theta'(\cdot,\bu_k,\theta_k)
}
\]
the system \eqref{aaoIRGNM} can be rewritten as
{\small
\begin{equation}\label{aaoIRGNMmat6}
\hspace*{-3cm}
\left(\begin{array}{cccccc}
\mbox{id}&0&0&0&K_k^f&L_k^f\\
0&\mbox{id}&0&0&0&u_0'(\theta_k)\\
0&0&\mbox{id}&0&-K_k^g&-L_k^g\\
\cI_k^{f,D^*}&0&-\cI_k^{g,D^*}&\mbox{id}&0&0\\
\cI^D&e^{-\cdot D}&0&\cI^DI^{-1}+e^{-\cdot D}\delta_0&\alpha_k\mbox{id}&0\\
\cI_k^{f}&-u_0'(\theta_k)^*&\cI_k^{g}&0&0&\alpha_k\mbox{id}
\end{array}\right)
\left(\begin{array}{c}
\bw_k\\ h_k\\ \bz_k\\ \bp_k\\ \bu_{k+1}-\bu_{k}\\ \theta_{k+1}-\theta_{k}\end{array}\right)
=
\left(\begin{array}{c}
\dot{\bu}_k-f(\cdot,\bu_k,\theta_k)\\ \bu_k(0)-u_0(\theta_k)\\ g(\cdot,\bu_k,\theta_k)-\by^\delta\\ 0\\ \alpha_k(\bar{\bu}-\bu_{k})\\ \alpha_k(\bar{\theta}-\theta_{k})\end{array}\right)
\end{equation}
}

Alternatively, re-inserting $\bw_k$ and $h_k$ from the first two lines in \eqref{aaoIRGNM} into the rest of the equations, we arrive at
\begin{equation}\label{aaoIRGNMmat4}
\hspace*{-2.5cm}
\eqalign{
\left(\begin{array}{cccc}
\mbox{id}&0&-K_k^g&-L_k^g\\
-\cI_k^{g,D^*}&\mbox{id}&-\cI_k^{f,D^*}K_k^f&-\cI_k^{f,D^*}L_k^f\\
0&\cI^DI^{-1}+e^{-\cdot D}\delta_0&\alpha_k\mbox{id}-\cI^DK_k^f&-\cI^DL_k^f-e^{-\cdot D}u_0'(\theta_k)\\
\cI_k^{g}&0&-\cI_k^{f}K_k^f&\alpha_k\mbox{id}+\cI_k^{f}L_k^f+u_0'(\theta_k)^*u_0'(\theta_k)
\end{array}\right)
\left(\begin{array}{c}
\bz_k\\ \bp_k\\ \bv_k\\ \theta_{k+1}-\theta_{k}\end{array}\right)\\
\hspace*{3cm}=\left(\begin{array}{c}
g(\cdot,\bu_k,\theta_k)-\by^\delta\\ 
\cI_k^{f,D^*}(\dot{\bu}_k-f(\cdot,\bu_k,\theta_k))\\ 
\alpha_k(\bar{\bu}-\bu_k)+\cI^D(\dot{\bu}_k-f(\cdot,\bu_k,\theta_k))+e^{-\cdot D}(\bu_k(0)-u_0(\theta_k))\\ 
\alpha_k(\bar{\theta}-\theta_{k})-\cI_k^f(\dot{\bu}_k-f(\cdot,\bu_k,\theta_k))-u_0'(\theta_k)^*(\bu_k(0)-u_0(\theta_k))\end{array}\right)
}
\end{equation}

\end{remark}

\subsection{reduced Landweber}\label{sec:redLW}
Solve the nonlinear evolution
\[
\hspace*{-2cm}\dot{\bu}_k(t)=f(t,\bu_k(t),\theta_k) \quad t\in(0,T)\,, \quad\bu_k(0)=u_0(\theta)
\]
for $\bu_k$ and set $\bz_k(t)=g(t,{\bu}_k(t),\theta_k)-\by^\delta(t)$\\
(so that $\bu_k=S(\theta_k)$, $\bz_k=F(\theta_k)-\by^\delta$).\\
Solve the adjoint equation
\[
\hspace*{-2cm}
-\dot{\bp}^\bz_k(t)=f'_u(t,{\bu}_k(t),\theta_k)^*\bp^\bz_k(t)+g_u'(t,{\bu}_k(t),\theta_k)^*\bz_k(t)\quad t\in(0,T)\,, \quad\bp^\bz_k(T)=0
\]
Set $\theta_{k+1}=\theta_k-\mu_k\Bigl(
\int_0^T (g'_\theta(t,{\bu}_k(t),\theta_k)^*\bz_k(t)+f'_\theta(t,{\bu}_k(t),\theta_k)^*\bp^\bz_k(t))\,dt+u_0'(\theta_k)^*\bp^\bz_k(0)\Bigr)$.

\subsection{reduced Landweber-Kaczmarz}\label{sec:redLWK}
Solve the nonlinear evolution
\[
\hspace*{-2cm}\dot{\bu}_k(t)=f(t,\bu_k(t),\theta_k) \quad t\in(0,T)\,, \quad\bu_k(0)=u_0(\theta)
\]
for $\bu_k$ and set $\bz_k(t)=g(t,{\bu}_k(t),\theta_k)-\by^\delta(t)\vert_{(\tau_{j(k)},\tau_{j(k)+1})}$\\
(so that $\bu_k=S(\theta_k)$, $\bz_k=F_{j(k)}(\theta_k)-\by^\delta$).\\
Solve the adjoint equation
\[
\hspace*{-2cm}
-\dot{\bp}^\bz_k(t)=f'_u(t,{\bu}_k(t),\theta_k)^*\bp^\bz_k(t)+\chi_{j(k)}(g_u'(,{\bu}_k,\theta_k)^*\bz_k)(t)\quad t\in(0,T)\,, \quad\bp^\bz_k(T)=0
\]
Set $\theta_{k+1}=\theta_k-\mu_k\Bigl(
\int_{\tau_{j(k)}}^{\tau_{j(k)+1}} g'_\theta(t,S{\bu}_k(t),\theta_k)^*\bz_k(t)\, dt+\int_0^T f'_\theta(t,{\bu}_k(t),\theta_k)^*\bp^\bz_k(t))\,dt+u_0'(\theta_k)^*\bp^\bz_k(0)\Bigr)$.

\subsection{reduced iteratively regularized Gauss-Newton}\label{sec:redIRGNM}
Solve the nonlinear evolution
\[
\hspace*{-2cm}\dot{\bu}_k(t)=f(t,\bu_k(t),\theta_k) \quad t\in(0,T)\,, \quad\bu_k(0)=u_0(\theta)
\]
for $\bu_k$.\\
Solve the linear system
\begin{equation}\label{redIRGNM}
\hspace*{-2cm}
\eqalign{
\bz_k(t)=g(t,{\bu}_k(t),\theta_k)+g_u'(t,{\bu}_k(t),\theta_k){\bv}_k(t)+g_\theta'(t,{\bu}_k(t),\theta_k)(\theta_{k+1}-\theta_k)-\by^\delta(t)\\
-\dot{\bp}^\bz_k(t)=f'_u(t,{\bu}_k(t),\theta_k)^*\bp^\bz_k(t)+g_u'(t,{\bu}_k(t),\theta_k)^*\bz_k(t)\quad t\in(0,T)\,, \quad\bp^\bz_k(T)=0\\
\dot{\bv}_k(t)=f'_u(t,{\bu}_k(t),\theta_k)\bv_k(t)+f'_\theta(t,{\bu}_k(t),\theta_k)(\theta_{k+1}-\theta_k) \quad t\in(0,T)\,, \\
\hspace*{9cm}\bv_k(0)=u_0'(\theta)(\theta_{k+1}-\theta_k)\\
\alpha_k(\theta_{k+1}-\bar{\theta})\\
+\int_0^T (g'_\theta(t,S{\bu}_k(t),\theta_k)^*\bz_k(t)+f'_\theta(t,{\bu}_k(t),\theta_k)^*\bp^\bz_k(t))\,dt+u_0'(\theta_k)^*\bp^\bz_k(0)=0
}
\end{equation}
for $(\bz_k,\bp_k,\bv_k,\theta_{k+1})\in\cY\times\cU\times\cU\times\cX$.\\
(Note that here $\bu_k=S(\theta_k)$, $\bv_k=S'(\theta_k)(\theta_{k+1}-\theta_k)$, $\bz_k=F(\theta_k)+F'(\theta_k)(\theta_{k+1}-\theta_k)$.)

\begin{remark}
With the operators
\[ 
\eqalign{
\widetilde{\cI}_k^f:\cX\to\cU\,, \quad \bv=\widetilde{\cI}_k^f\xi\mbox{ solves }\\
\dot{\bv}(t)=f'_u(t,\bu_k(t),\theta)\bv(t)+f'_\theta(t,\bu_k(t),\theta)\xi \quad t\in(0,T)\,, \quad\bv(0)=u_0'(\theta)\xi\\
\widetilde{\cI}_k^{fg}:\cY\to\cU\,, \quad \bp=\widetilde{\cI}_k^{fg}\bz\mbox{ solves }\\
-\dot{\bp}(t)=f'_u(t,\bu_k(t),\theta)^*\bp(t)+g'_u(t,\bu_k(t),\theta)^*\bz(t) \quad t\in(0,T)\,, \quad\bp(T)=0
}
\]
the system \eqref{redIRGNM} can be rewritten as
\begin{equation}\label{redIRGNMmat4}
\hspace*{-2.5cm}
\left(\begin{array}{cccc}
\mbox{id}&0&-K_k^g&-L_k^g\\
-\widetilde{\cI}_k^{fg}&\mbox{id}&0&0\\
0&0&\mbox{id}&-\widetilde{\cI}_k^f\\
\cI_k^{g}&\cI_k^{f}I^{-1}+u_0'(\theta_k)^*\delta_0&0&\alpha_k\mbox{id}
\end{array}\right)
\left(\begin{array}{c}
\bz_k\\ \bp_k\\ \bv_k\\ \theta_{k+1}-\theta_{k}\end{array}\right)
=
\left(\begin{array}{c}
g(\cdot,\bu_k,\theta_k)-\by^\delta\\ 0\\ 0\\ \alpha_k(\bar{\theta}-\theta_{k})\end{array}\right)
\end{equation}
\end{remark}
Unique solvability of the systems \eqref{aaoIRGNMmat4}, \eqref{redIRGNMmat4} follows from general well-posedness of the aIRGNM and the rIRGNM according to existing Hilbert space regularization theory.

\medskip

Comparing the reduced and the all-at-once versions of the three methods (LW, LWK, IRGNM) we observe the following. 
\begin{itemize}
\item All reduced versions require one solution of the nonlinear model per step, whereas no nonlinear model is solved in the all-at-once-versions.
\item In aLW instead of the nonlinear model we solve a simple linear evolution (involving the constant operator $D$); moreover, both rLW and aLW involve solving a linear backwards evolution, which is again a simple one in case of aLW.
\item The Kaczmarz version aLWK allows to restrict not only the observations but also the adjoint state computation to the subinterval $(\tau_j,\tau_{j+1})$ of  the current step, whereas in rLWK in each step a full (nonlinear) forward and backward sweep over the whole time interval $(0,T)$ has to be done.
\item Both aIRGNM and rIRGNM lead to coupled systems (in the latter case, after solving a forward model); the one for the aIRGNM as compared to rIRGNM at a first glance either involves more unknown functions (\eqref{aaoIRGNMmat6} versus \eqref{redIRGNMmat4}) or leads to a denser, i.e., more strongly coupled system (\eqref{aaoIRGNMmat4} versus \eqref{redIRGNMmat4}). However, again, note that in the all-at-once version only very simple forward and backward evolutions are involved, whereas in the reduced version we have to deal with the linearized model and its adjoint which lead to different (and possibly nonautonomous) systems in each step. 
\end{itemize}

Potential computational advantages of the all-at once versions are expected to be more pronounced for nonlinear models. However, tests of their numerical performance for several practically relevant examples are yet to be done and will be subject of future work.

\section{Convergence analysis}\label{sec:convanal}

For all methods under consideration we can apply the existing regularization theory in Hilbert spaces, once we have quantified the noise level $\delta$ (whose convergence to zero yields convergence of these iterative methods with an appropriate choice of the stopping index in dependence of $\delta$) and under appropriate conditions on the forward operator. 

\subsection{Noise from perturbations}
Note that the difference between the given data $\bby^\delta$ and $\by^\delta$ used in the all-at-once and reduced methods, respectively, and the corresponding operator value at the exact solution can be expressed as follows.
\[
\eqalign{
\|\bby^\delta-\bby\|_{\cW\times H\times\cY}= \|(0,0,\by^\delta)-(0,0,\bby)\|_{\cW\times H\times\cY}= 
\|\by^\delta-\by\|_{\cY}\\
=\|g(\cdot,\budag,\thdag)-g(\cdot,\bu^\delta,\thdag)-\bz^\delta\|_{\cY}
}
\]
where $(\budag,\thdag)$ and $(\bu^\delta,\thdag)$ solve \eqref{modex}, \eqref{obsex} and \eqref{mod}, \eqref{obs}, respectively.
Thus, assuming differentiability of $f$ and $g$ with respect to $u$ 
and boundedness of the derivatives by \eqref{growthfuftheta}, \eqref{growthgugtheta}, we can make use of the fact that the difference of the states satisfies
\[
\ddt(\budag-\bu^\delta)(t)={\hat{A}}(t)(\budag-\bu^\delta)(t)-\bw^\delta(t) \quad t\in(0,T)\,, \quad(\budag-\bu^\delta)(0)=0\,,
\]
with 
\[
{\hat{A}}(t)=\int_0^1 f_u'(t,\lambda\budag(t)+(1-\lambda)\bu^\delta(t),\thdag)\, d\lambda, \quad \|{\hat{A}}(t)\|_{V\to V^*}\leq c_{f,u}\,,
\]
with some constant $c_{f,u}$ (depending only on $\|\budag\|_\cU$, $\|\bu^\delta\|_\cU$), hence 
\[
(\budag-\bu^\delta)(t)=-\int_0^t \bw^\delta(\tau)\,d\tau
-\int_0^te^{\int_s^t{\hat{A}}(\sigma)\, d\sigma}{\hat{A}}(s) \int_0^s \bw^\delta(\tau)\,d\tau\, ds
\]
and therewith
\[
\|\budag-\bu^\delta\|_{L_2(0,T;V)}\leq c \|\int_0^\cdot \bw^\delta(\tau)\,d\tau\|_{L_2(0,T;V^*)}
\leq \tilde{c} \|\bw^\delta\|_{L_2(0,T;V^*)}\,.
\]
Thus altogether
\begin{equation}\label{delta}
\eqalign{
\|\bby^\delta-\bby\|_{\cW\times H\times\cY}=
\|\by^\delta-\by\|_{\cY}
\leq c_{g,u}\tilde{c}\delta_\bw+\delta_\bz=:\delta\,,
}
\end{equation}
where $\delta_\bw$, $\delta_\bz$ are bounds for the perturbations
\begin{equation}\label{deltawdeltaz}
\|\bw^\delta\|_{\cW}\leq\delta_\bw\,, \quad \|\bz^\delta\|_{\cY}\leq\delta_\bz
\end{equation}
and $c_{g,u}$ is the bound following from \eqref{growthgugtheta} (and depending only on $\|\budag\|_\cU$, $\|\bu^\delta\|_\cU$) in 
\[
\|\int_0^1 g_u'(t,\lambda\budag(t)+(1-\lambda)\bu^\delta(t),\thdag)\, d\lambda\|_{V\to V^*}\leq c_{g,u}\,.
\]

\subsection{Conditions on the operators}
Convergence conditions on the operators take the form of the 
\begin{itemize}
\item tangential cone condition in its all-at-once  
\begin{equation} \label{tangcone_aao}
\hspace*{-2.5cm}\eqalign{
\|f(\cdot,\tilde{\bu},\tilde{\theta})-f(\cdot,\bu,\theta)-f_u'(\cdot,\bu,\theta)(\tilde{\bu}-\bu)-f_\theta'(\cdot,\bu,\theta)(\tilde{\theta}-\theta)\|_\cW\nonumber\\
+\|u_0(\tilde{\theta})-u_0(\theta)-u_0'(\theta)(\tilde{\theta}-\theta)\|_H\nonumber\\
+\|g(\cdot,\tilde{\bu},\tilde{\theta})-g(\cdot,\bu,\theta)-g_u'(\cdot,\bu,\theta)(\tilde{\bu}-\bu)-g_\theta'(\cdot,\bu,\theta)(\tilde{\theta}-\theta)\|_\cY\nonumber\\
\leq 
c_{tc} \Bigl(\|\dot{\tilde{\bu}}-\dot{\bu}-f(\cdot,\tilde{\bu},\tilde{\theta})+f(\cdot,\bu,\theta)\|_\cW
+\|u_0(\tilde{\theta})-u_0(\theta)\|_H
+\|g(\cdot,\tilde{\bu},\tilde{\theta})-g(\cdot,\bu,\theta)\|_\cY
\Bigr)
}\hspace*{-2.5cm}
\end{equation}
and reduced version
\begin{equation} \label{tangcone_red}
\hspace*{-2.5cm}\|g(\cdot,\hat{\bu},\tilde{\theta})-g(\cdot,\bu,\theta)-g_u'(\cdot,\bu,\theta)\bv-g_\theta'(\cdot,\bu,\theta)(\tilde{\theta}-\theta)\|_\cY
\leq 
\tilde{c}_{tc} \|g(\cdot,\hat{\bu},\tilde{\theta})-g(\cdot,\bu,\theta)\|_\cY
\end{equation}
where $\hat{\bu}$, $\bu$, $\bv$ solve 
\begin{eqnarray}
\hspace*{-2cm}
\dot{\hat{\bu}}(t)=f(t,\hat{\bu}(t),\theta) \quad t\in(0,T)\,, \quad\hat{\bu}(0)=u_0(\tilde{\theta})\label{modutil}\\
\hspace*{-2cm}
\dot{\bu}(t)=f(t,\bu(t),\theta) \quad t\in(0,T)\,, \quad\bu(0)=u_0(\theta)
\label{modu}\\
\hspace*{-2cm}
\dot{\bv}(t)=f'_u(t,\bu(t),\theta)\bv(t)+f'_\theta(t,\bu(t),\theta)(\tilde{\theta}-\theta) \quad t\in(0,T)\,, \quad\bv(0)=u_0'(\theta)(\tilde{\theta}-\theta)\,,
\hspace*{-2cm}
\nonumber
\end{eqnarray}
\item the adjoint range invariance (which is sufficient for the tangential cone condition) in its all-at-once
\begin{equation}\label{adjrange_aao}
\hspace*{-2cm}\eqalign{
\ddt-f_u'(\cdot,\tilde{\bu},\tilde{\theta})=
\bR_{\cW\cW}^{\tilde{\bu},\tilde{\theta},\bu,\theta}(\ddt-f_u'(\cdot,\bu,\theta))
+\bR_{\cW H}^{\tilde{\bu},\tilde{\theta},\bu,\theta}\delta_0
+\bR_{\cW\cY}^{\tilde{\bu},\tilde{\theta},\bu,\theta}g_u'(\cdot,\bu,\theta)
\\
\delta_0=
\bR_{H\cW}^{\tilde{\bu},\tilde{\theta},\bu,\theta}(\ddt-f_u'(\cdot,\bu,\theta))
+\bR_{HH}^{\tilde{\bu},\tilde{\theta},\bu,\theta}\delta_0
+\bR_{H\cY}^{\tilde{\bu},\tilde{\theta},\bu,\theta}g_u'(\cdot,\bu,\theta)
\\
g_u'(\cdot,\tilde{\bu},\tilde{\theta})=
\bR_{\cY\cW}^{\tilde{\bu},\tilde{\theta},\bu,\theta}(\ddt-f_u'(\cdot,\bu,\theta))
+\bR_{\cY H}^{\tilde{\bu},\tilde{\theta},\bu,\theta}\delta_0
+\bR_{\cY\cY}^{\tilde{\bu},\tilde{\theta},\bu,\theta}g_u'(\cdot,\bu,\theta)
\\
-f_\theta'(\cdot,\tilde{\bu},\tilde{\theta})=
-\bR_{\cW\cW}^{\tilde{\bu},\tilde{\theta},\bu,\theta}f_\theta'(\cdot,\bu,\theta))
-\bR_{\cW H}^{\tilde{\bu},\tilde{\theta},\bu,\theta}u_0'(\theta)
+\bR_{\cW\cY}^{\tilde{\bu},\tilde{\theta},\bu,\theta}g_\theta'(\cdot,\bu,\theta)
\\
-u_0'(\tilde{\theta})=
-\bR_{H\cW}^{\tilde{\bu},\tilde{\theta},\bu,\theta}f_\theta'(\cdot,\bu,\theta))
-\bR_{HH}^{\tilde{\bu},\tilde{\theta},\bu,\theta}u_0'(\theta)
+\bR_{H\cY}^{\tilde{\bu},\tilde{\theta},\bu,\theta}g_\theta'(\cdot,\bu,\theta)
\\
g_\theta'(\cdot,\tilde{\bu},\tilde{\theta})=
-\bR_{\cY\cW}^{\tilde{\bu},\tilde{\theta},\bu,\theta}f_\theta'(\cdot,\bu,\theta))
-\bR_{\cY H}^{\tilde{\bu},\tilde{\theta},\bu,\theta}u_0'(\theta)
+\bR_{\cY\cY}^{\tilde{\bu},\tilde{\theta},\bu,\theta}g_\theta'(\cdot,\bu,\theta)
}
\end{equation}
and its reduced version
\begin{equation}\label{adjrange_red}
\hspace*{-2cm}
\eqalign{
g_u'(\cdot,\hat{\bu},\tilde{\theta})S'(\tilde{\theta})+g_\theta'(\cdot,\hat{\bu},\tilde{\theta})
=R_{\cY\cY}^{\tilde{\theta},\theta}(g_u'(\cdot,\bu,\theta)S'(\theta)+g_\theta'(\cdot,\bu,\theta))
\\
\mbox{where }\hat{\bu},\ \bu \mbox{ solve \eqref{modutil}, \eqref{modu} and $S'(\tilde{\theta}):\xi\mapsto \tilde{\bv}$, $S'(\theta):\xi\mapsto\bv$}
}
\end{equation}
where
\begin{equation}\label{vtilv}
\hspace*{-2cm}
\eqalign{
\dot{\tilde{\bv}}(t)=f'_u(t,\hat{\bu}(t),\tilde{\theta})\tilde{\bv}(t)+f'_\theta(t,\hat{\bu}(t),\theta)\xi \quad t\in(0,T)\,, \quad\tilde{\bv}(0)=u_0'(\tilde{\theta})\xi\\
\dot{\bv}(t)=f'_u(t,\bu(t),\theta)\bv(t)+f'_\theta(t,\bu(t),\theta)\xi \quad t\in(0,T)\,, \quad\bv(0)=u_0'(\theta)\xi}
\end{equation}
\item the range invariance in its all-at-once
\begin{equation}\label{range_aao}
\hspace*{-2cm}\eqalign{
\ddt-f_u'(\cdot,\tilde{\bu},\tilde{\theta})=
(\ddt-f_u'(\cdot,\bu,\theta))\bR_{\cU\cU}^{\tilde{\bu},\tilde{\theta},\bu,\theta}
-f_\theta'(\cdot,\bu,\theta)\bR_{\cX\cU}^{\tilde{\bu},\tilde{\theta},\bu,\theta}
\\
\delta_0=
\delta_0\bR_{\cU\cU}^{\tilde{\bu},\tilde{\theta},\bu,\theta}
-u_0'(\theta)\bR_{\cX\cU}^{\tilde{\bu},\tilde{\theta},\bu,\theta}
\\
g_u'(\cdot,\tilde{\bu},\tilde{\theta})=
g_u'(\cdot,\bu,\theta)\bR_{\cU\cU}^{\tilde{\bu},\tilde{\theta},\bu,\theta}
+g_\theta'(\cdot,\bu,\theta)\bR_{\cX\cU}^{\tilde{\bu},\tilde{\theta},\bu,\theta}
\\
-f_\theta'(\cdot,\tilde{\bu},\tilde{\theta})=
(\ddt-f_u'(\cdot,\bu,\theta))\bR_{\cU\cX}^{\tilde{\bu},\tilde{\theta},\bu,\theta}
-f_\theta'(\cdot,\bu,\theta)\bR_{\cX\cX}^{\tilde{\bu},\tilde{\theta},\bu,\theta}
\\
-u_0'(\tilde{\theta})=
\delta_0\bR_{\cU\cX}^{\tilde{\bu},\tilde{\theta},\bu,\theta}
-u_0'(\theta)\bR_{\cX\cX}^{\tilde{\bu},\tilde{\theta},\bu,\theta}
\\
g_\theta'(\cdot,\tilde{\bu},\tilde{\theta})=
g_u'(\cdot,\bu,\theta)\bR_{\cU\cX}^{\tilde{\bu},\tilde{\theta},\bu,\theta}
+g_\theta'(\cdot,\bu,\theta)\bR_{\cX\cX}^{\tilde{\bu},\tilde{\theta},\bu,\theta}
}
\end{equation}
and its reduced version
\begin{equation}\label{range_red}
\hspace*{-2cm}
\eqalign{
g_u'(\cdot,\hat{\bu},\tilde{\theta})S'(\tilde{\theta})+g_\theta'(\cdot,\hat{\bu},\tilde{\theta})
=(g_u'(\cdot,\bu,\theta)S'(\theta)+g_\theta'(\cdot,\bu,\theta))R_{\cX\cX}^{\tilde{\theta},\theta}\\
\mbox{where }\hat{\bu},\ \bu \mbox{ solve \eqref{modutil}, \eqref{modu} and $S'(\tilde{\theta}):\xi\mapsto \tilde{\bv}$, $S'(\theta):\xi\mapsto\bv$ with \eqref{vtilv}}\,.
}
\end{equation}
\end{itemize}
These conditions are supposed to hold locally, i.e., for all $\tilde{\bu},\bu$, $\tilde{\theta},\theta$ in a small neighborhood of $\budag$, $\thdag$, respectively and the linear operators $\bR_{NM}^{\tilde{\bu},\tilde{\theta},\bu,\theta}$, $R_{NM}^{\tilde{\theta},\theta}$, $N,M\in\{\cW,H,\cY,\cU,\cX\}$ in \eqref{adjrange_aao} -- \eqref{range_red} have to be sufficiently close (in operator norm) to the identity. The latter can be relaxed to closeness on the subspaces $\overline{\mathcal{R}(F'(\theta))}$ and $\mathcal{N}(F'(\theta))^\bot$ for \eqref{adjrange_red} and \eqref{range_red}, respectively.
The conditions are rephrased for the operators $\bF_j$, $F_j$ in \eqref{bFj}, \eqref{Fj} in a straightforward manner, by restriction to the subintervals $(\tau_j,\tau_{j+1})$.

All of these conditions obviously hold in the linear case, i.e., if for all $t\in(0,T)$, $f$, $g$, $u_0$ are linear functions of $(u,\theta)$
\[ 
f(t,u,\theta)=A(t)u+B(t)\theta\,, \quad u_0(\theta)=B_0\theta\,, \quad g(t,u,\theta)=C(t)u+E(t)\theta\,,
\]
hence also
\[
S:\theta\mapsto e^{\int_0^t A(\sigma)\, d\sigma}B_0
+\int_0^t B(\tau)\,d\tau
+\int_0^te^{\int_s^t A(\sigma)\, d\sigma} A(s) \int_0^s B(\tau)\,d\tau\, ds
\]
is linear.

\medskip

We now investigate a partially nonlinear case, where especially the model is still nonlinear, so that the differences between the all-at-once and the reduced versions of the methods under consideration get pronounced.
To verify conditions \eqref{adjrange_aao} -- \eqref{range_red} in the simplified but practically relevant setting, where the parameter $\theta$ enters the model equation only linearly, and where the observation operator is linear, 
\begin{equation}\label{AC}
\eqalign{
f(\cdot,\bu,\theta)=\hat{f}(\cdot,\bu)+B\theta\,,\\
g(\cdot,\bu,\theta)=C\bu+E\theta\,,\\
u_0(\theta)=B_0\theta
}
\end{equation}
with bounded linear operators
\begin{equation}\label{BB0C}
B:\cX\to \cW\,, \quad B_0:\cX\to H\,, \quad C:\cU\to\cY\,, \quad E:\cX\to\cY\,,
\end{equation}
\cite[Lemma 7]{aao16} can be employed and yields the following result. 

\begin{lemma} \label{lem:adjrange}
Let $f$ and $g$ in \eqref{mod}, \eqref{obs} be defined by \eqref{AC} with \eqref{BB0C}.
\begin{enumerate}
\item \label{lemadjrange_aao}
If $\hat{f}$ is G\^{a}teaux differentiable with respect to its second argument and 
\begin{equation}\label{fhatC}
\hspace*{-2cm}
\forall \bv\in \cU\, : \
\|\hat{f}_u'(\cdot,\tilde{\bu})\bv-\hat{f}_u'(\cdot,\bu)\bv\|_{\cW}
\leq c_R\|C\bv\|_\cY,
\end{equation}
then \eqref{adjrange_aao} holds with 
\begin{equation}\label{bR_adjrange_aao}
\bR^{\tilde{\bu},\tilde{\theta},\bu,\theta}=
\left(\begin{array}{ccc}
\mbox{id}&0&(\hat{f}_u'(\cdot,\bu)-\hat{f}_u'(\cdot,\tilde{\bu}))C^\dagger\\
0&\mbox{id}&0\\
0&0&\mbox{id}
\end{array}\right)
\end{equation}
and 
\[
\|\bR^{\tilde{\bu},\tilde{\theta},\bu,\theta}-\mbox{id}\|_{\cW\times H \times\cY\to\cW\times H \times\cY}\leq c_R\,.
\]
\item \label{lemadjrange_red}
If \eqref{fhatC} with $\tilde{\bu}=S(\tilde{\theta})$, $\bu=S(\theta)$ and additionally
\begin{equation}\label{Ainvbounded}
\forall \bv\in \cU\, : \ 
\|\bv\|_\cU\leq c_f\,\Bigl(\|(\ddt-\hat{f}_u'(\cdot,S(\tilde{\theta})))\bv\|_{\cU}+\|\bv(0)\|\Bigr)
\end{equation}
then \eqref{adjrange_red} holds with 
\begin{equation}\label{R_adjrange_red}
\hspace*{-2cm}
\eqalign{
R_{\cY\cY}^{\tilde{\theta},\theta}=
C\Bigl(\ddt-\hat{f}_u'(\cdot,S(\tilde{\theta})),\delta_0\Bigr)^{-1}(\ddt-\hat{f}_u'(\cdot,S(\theta)),\delta_0)C^\dagger\\
=\mbox{Proj}_{\overline{\mathcal{R}(C)}}
+C\Bigl(\ddt-\hat{f}_u'(\cdot,S(\tilde{\theta})),\delta_0\Bigr)^{-1}\Bigl((\hat{f}_u'(\cdot,S(\tilde{\theta}))-\hat{f}_u'(\cdot,S(\theta))C^\dagger,0\Bigr)
}
\end{equation}
and 
\[
\|(R_{\cY\cY}^{\tilde{\theta},\theta}-\mbox{id})\mbox{Proj}_{\overline{\mathcal{R}(F'(\theta))}}\|_{\cY\to\cY}\leq c_Rc_f\|C\|_{\cU\to\cY}=:\tilde{c}_R
\]
\end{enumerate}
Here $C^\dagger$ is the Moore-Penrose generalized inverse of $C:\cU\to\cY$.
\end{lemma}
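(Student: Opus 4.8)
The plan is to reduce both assertions to the abstract factorization result \cite[Lemma 7]{aao16}, so that the only real work is to identify the concrete operators and to verify its single hypothesis. The structural observation behind this is that, under \eqref{AC}, the sole block of the derivative $\bF'(\bu,\theta)$ in \eqref{bFprime} that depends on the linearization point is $\ddt-\hat{f}_u'(\cdot,\bu)$, while the blocks $-B$, $\delta_0$, $-B_0$, $C$, $E$ are constant. Consequently the entire deviation of $\bF'$ sits in one block,
\[
\bF'(\tilde{\bu},\tilde{\theta})-\bF'(\bu,\theta)=
\left(\begin{array}{cc}\hat{f}_u'(\cdot,\bu)-\hat{f}_u'(\cdot,\tilde{\bu})&0\\0&0\\0&0\end{array}\right),
\]
namely the $\cW$-row, $\cU$-column.

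First I would convert \eqref{fhatC} into an exact factorization through $C=g_u'(\cdot,\bu,\theta)$. Put $N:=(\hat{f}_u'(\cdot,\tilde{\bu})-\hat{f}_u'(\cdot,\bu))C^\dagger:\cY\to\cW$. Using the Moore-Penrose identities $C^\dagger C=\mbox{Proj}_{\mathcal{N}(C)^\bot}$ and $CC^\dagger=\mbox{Proj}_{\overline{\mathcal{R}(C)}}$, the bound \eqref{fhatC} says exactly that $\hat{f}_u'(\cdot,\tilde{\bu})-\hat{f}_u'(\cdot,\bu)$ annihilates $\mathcal{N}(C)$, whence $NC=\hat{f}_u'(\cdot,\tilde{\bu})-\hat{f}_u'(\cdot,\bu)$, and the same bound applied to $\bv=C^\dagger\bz$ gives $\|N\|_{\cY\to\cW}\le c_R$. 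This is precisely the hypothesis of \cite[Lemma 7]{aao16} for the all-at-once operator, whose conclusion yields \eqref{adjrange_aao} with the single nonzero off-diagonal block $\bR_{\cW\cY}=-N$ as in \eqref{bR_adjrange_aao} and $\|\bR^{\tilde{\bu},\tilde{\theta},\bu,\theta}-\mbox{id}\|=\|N\|\le c_R$. Explicitly one checks $\bR\,\bF'(\bu,\theta)=\bF'(\tilde{\bu},\tilde{\theta})$ block by block: the $\cW$/$\cU$ entry is reproduced because $\bR_{\cW\cY}C=-NC=\hat{f}_u'(\cdot,\bu)-\hat{f}_u'(\cdot,\tilde{\bu})$, while the remaining entries are left unchanged.

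For the reduced assertion I would first make $S$ differentiable and exhibit $S'$. By Proposition \ref{prop:deriv_F} the sensitivity equation \eqref{sensitivityequation} becomes, under \eqref{AC}, $(\ddt-\hat{f}_u'(\cdot,S(\theta)))\bv=B\xi$, $\bv(0)=B_0\xi$, and \eqref{Ainvbounded} is exactly the bounded invertibility of $A_{\tilde{\theta}}:=(\ddt-\hat{f}_u'(\cdot,S(\tilde{\theta})),\delta_0):\cU\to\cW\times H$ with $\|A_{\tilde{\theta}}^{-1}\|\le c_f$; hence $S'(\theta)=A_\theta^{-1}(B,B_0)$ and $F'(\theta)=C\,S'(\theta)+E$. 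Writing $A_\theta=A_{\tilde{\theta}}+(NC,0)$ from the factorization above and applying the reduced form of \cite[Lemma 7]{aao16} gives $F'(\tilde{\theta})=R_{\cY\cY}^{\tilde{\theta},\theta}F'(\theta)$ with $R_{\cY\cY}^{\tilde{\theta},\theta}=CA_{\tilde{\theta}}^{-1}A_\theta C^\dagger$; the second form in \eqref{R_adjrange_red} then follows from $A_{\tilde{\theta}}^{-1}A_\theta=\mbox{id}+A_{\tilde{\theta}}^{-1}(NC,0)$ together with $CC^\dagger=\mbox{Proj}_{\overline{\mathcal{R}(C)}}$ and $C^\dagger CC^\dagger=C^\dagger$. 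Restricting to $\overline{\mathcal{R}(F'(\theta))}$, on which $\mbox{Proj}_{\overline{\mathcal{R}(C)}}$ acts as the identity, removes the projection term and leaves $R_{\cY\cY}^{\tilde{\theta},\theta}-\mbox{id}=CA_{\tilde{\theta}}^{-1}((\hat{f}_u'(\cdot,S(\tilde{\theta}))-\hat{f}_u'(\cdot,S(\theta)))C^\dagger,0)$, so that $\|(R_{\cY\cY}^{\tilde{\theta},\theta}-\mbox{id})\mbox{Proj}_{\overline{\mathcal{R}(F'(\theta))}}\|\le\|C\|\,\|A_{\tilde{\theta}}^{-1}\|\,\|N\|\le c_Rc_f\|C\|$.

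The point I expect to be delicate is the bookkeeping with the generalized inverse. Reproducing the full derivative from the single block $\bR_{\cW\cY}$ (respectively the single correction term in $R_{\cY\cY}^{\tilde{\theta},\theta}$) hinges on the Moore-Penrose identities above, and in the reduced case on restricting the closeness estimate to $\overline{\mathcal{R}(F'(\theta))}$ (equivalently $\mathcal{N}(F'(\theta))^\bot$ for the range-invariance variant), since off that subspace $R_{\cY\cY}^{\tilde{\theta},\theta}-\mbox{id}$ still contains the non-small term $-\mbox{Proj}_{\overline{\mathcal{R}(C)}^\bot}$. The one place where the concrete structure \eqref{AC} must really be used, rather than formal block algebra, is in checking that the columns acting on $\cX$ are consistently reproduced --- in particular that the direct parameter dependence $E=g_\theta'$ of the observation does not obstruct the factorization, which is immediate in the practically relevant case where the observation carries no direct $\theta$-dependence.
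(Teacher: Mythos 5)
Your proposal is correct and follows essentially the same route as the paper: the same null-space inclusion $\mathcal{N}(C)\subseteq\mathcal{N}(\hat{f}_u'(\cdot,\tilde{\bu})-\hat{f}_u'(\cdot,\bu))$ combined with the Moore--Penrose identities to get $(\hat{f}_u'(\cdot,\tilde{\bu})-\hat{f}_u'(\cdot,\bu))C^\dagger C=\hat{f}_u'(\cdot,\tilde{\bu})-\hat{f}_u'(\cdot,\bu)$, the same block-by-block verification of \eqref{adjrange_aao}, the same reading of \eqref{Ainvbounded} as bounded invertibility of $(\ddt-\hat{f}_u'(\cdot,S(\tilde{\theta})),\delta_0)$, and the same norm estimates; the paper merely carries out directly the computation that you organize as an application of \cite[Lemma 7]{aao16}, which is exactly the alternative the paper itself mentions. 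Your closing caveat about the $E=g_\theta'$ column is apt --- the paper's own verification tacitly sets that term to zero in the $\cX$-columns --- so flagging it does not put you at odds with the paper's argument.
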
 
\begin{proof}
For the sake of completeness of exposition and due to its shortness, the proof will be carried directly, without referring to \cite[Lemma 7]{aao16}.
Due to the fact that \eqref{fhatC} implies 
\[
\mathcal{N}(C)\subseteq\mathcal{N}(\hat{f}_u'(\cdot,\tilde{\bu})-\hat{f}_u'(\cdot,\bu)),
\]
hence 
\[
\hspace*{-2cm}(\hat{f}_u'(\cdot,\tilde{\bu})-\hat{f}_u'(\cdot,\bu))C^\dagger C
=(\hat{f}_u'(\cdot,\tilde{\bu})-\hat{f}_u'(\cdot,\bu))\mbox{Proj}^\cU_{\mathcal{N}(C)^\bot}
=(\hat{f}_u'(\cdot,\tilde{\bu})-\hat{f}_u'(\cdot,\bu)),
\]
formal verification of \eqref{adjrange_aao} and \eqref{adjrange_red} with \eqref{bR_adjrange_aao} and \eqref{R_adjrange_red}, respectively, in the setting \eqref{AC} is straightforward:
\[
\hspace*{-2cm}\eqalign{
\ddt-\hat{f}_u'(\cdot,\tilde{\bu})=
\mbox{id}(\ddt-\hat{f}_u'(\cdot,\bu))
+0 \delta_0
+(\hat{f}_u'(\cdot,\bu)-\hat{f}_u'(\cdot,\tilde{\bu}))C^\dagger C
\\
\delta_0=
0 (\ddt-\hat{f}_u'(\cdot,\bu))
+\mbox{id}\delta_0
+0 C
\\
C=
0 (\ddt-\hat{f}_u'(\cdot,\bu))
+0 \delta_0
+\mbox{id} C
\\
-B=
-\mbox{id} B 
-0 B_0
+ (\hat{f}_u'(\cdot,\bu)-\hat{f}_u'(\cdot,\tilde{\bu}))C^\dagger 0
\\
-B_0=
-0 B
-\mbox{id} B_0
+ 0
\\
E=
- 0 B
- 0 B_0
+ \mbox{id}E
}
\]
and, since in the reduced case $S'(\theta)=-(\ddt-\hat{f}_u'(\cdot,S(\theta)),\delta_0)^{-1}B$ holds,
\[
\hspace*{-2cm}
\eqalign{CS'(\tilde{\theta})+0
=C(\ddt-\hat{f}_u'(\cdot,S(\tilde{\theta})),\delta_0)^{-1}(\ddt-\hat{f}_u'(\cdot,S(\theta)),\delta_0)C^\dagger
(CS'(\theta)+0)\,.
}
\]
The difference from the identity on the relevant subspaces can be seen as follows:
\[
\eqalign{
\|\bR^{\tilde{\bu},\tilde{\theta},\bu,\theta}-\mbox{id}\|_{\cW\times H \times\cY\to\cW\times H \times\cY}=\|(\hat{f}_u'(\cdot,\bu)-\hat{f}_u'(\cdot,\tilde{\bu}))C^\dagger\|_{\cY\to\cW}
\\
=\sup_{\bv\in\mathcal{N}(C)^\bot\setminus\{0\}}
\frac{\|\hat{f}_u'(\cdot,\bu)-\hat{f}_u'(\cdot,\tilde{\bu})\bv\|_\cW}{\|C\bv\|_\cY}
\leq c_R\,,
}
\]
and, since in the reduced case we additionally have $\mathcal{R}(F'(\theta))=\mathcal{R}(C)$,
\[
\eqalign{
\|(R_{\cY\cY}^{\tilde{\theta},\theta}-\mbox{id})\mbox{Proj}_{\overline{\mathcal{R}(F'(\theta))}}\|_{\cY\to\cY}\\
=\|C\Bigl(\ddt-\hat{f}_u'(\cdot,S(\tilde{\theta})),\delta_0\Bigr)^{-1}\Bigl(\hat{f}_u'(\cdot,S(\tilde{\theta}))-\hat{f}_u'(\cdot,S(\theta)),0\Bigr)C^\dagger\|_{\cY\to\cY}
\\
\leq c_f \|C\|_{\cU\to\cY}
\|(\hat{f}_u'(\cdot,S(\theta))-\hat{f}_u'(\cdot,S(\tilde{\theta})))C^\dagger\|_{\cY\to\cW}
\leq c_R c_f \|C\|_{\cU\to\cY}
}
\]
\end{proof}

For the alternative range invariance condition, which can be used to prove convergence of the IRGNM type methods, analogously we get the following result.

\begin{lemma} \label{lem:range}
Let $f$ and $g$ in \eqref{mod}, \eqref{obs} be defined by \eqref{AC} with \eqref{BB0C}.
\begin{enumerate}
\item \label{lemrange_aao}
If $\hat{f}$ is G\^{a}teaux differentiable with respect to its second argument and 
\begin{equation}\label{fhatB}
\hspace*{-2cm}
\forall \bw\in \cW\, : \
\|\hat{f}_u'(\cdot,\tilde{\bu})^*I\bw-\hat{f}_u'(\cdot,\bu)^*I\bw\|_\cW
\leq c_R\|B^*\bw\|_\cY,
\end{equation}
are satisfied, then \eqref{range_aao} holds with 
\begin{equation}\label{bR_range_aao}
\bR^{\tilde{\bu},\tilde{\theta},\bu,\theta}=
\left(\begin{array}{cc}
\mbox{id}&0\\
B^\dagger(\hat{f}_u'(\cdot,\tilde{\bu})-\hat{f}_u'(\cdot,\bu))&\mbox{id}
\end{array}\right)
\end{equation}
and 
\[
\|\bR^{\tilde{\bu},\tilde{\theta},\bu,\theta}-\mbox{id}\|_{\cU\times\cX\to\cU\times\cX}\leq c_R\,.
\]
\item \label{lemrange_red}
If \eqref{fhatB} with $\tilde{\bu}=S(\tilde{\theta})$, $\bu=S(\theta)$ and additionally \eqref{Ainvbounded},
then \eqref{range_red} holds with 
\begin{equation}\label{R_range_red}
\hspace*{-2cm}
\eqalign{
R_{\cX\cX}^{\tilde{\theta},\theta}\xi=
B^\dagger(\ddt-\hat{f}_u'(\cdot,S(\theta)))\Bigl(\ddt-\hat{f}_u'(\cdot,S(\tilde{\theta})),\delta_0\Bigr)^{-1}(B\xi,0)\\
=\mbox{Proj}_{\mathcal{N}(B)^\bot}\xi
+B^\dagger\Bigl(\hat{f}_u'(\cdot,S(\tilde{\theta}))-\hat{f}_u'(\cdot,S(\theta))\Bigr)\Bigl(\ddt-\hat{f}_u'(\cdot,S(\tilde{\theta}))B,\delta_0\Bigr)^{-1}(B\xi,0)
}
\end{equation}
\[
\|\mbox{Proj}_{\mathcal{N}(F'(\theta))^\bot}(R_{\cX\cX}^{\tilde{\theta},\theta}-\mbox{id})\|_{\cX\to\cX}\leq c_Rc_f\|B\|_{\cX\to\cY}=:\tilde{c}_R
\]
\end{enumerate}
\end{lemma}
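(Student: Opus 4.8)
The plan is to mimic the direct-verification proof of Lemma \ref{lem:adjrange}, interchanging the roles of range and domain: whereas there $\bF'(\tilde{\bu},\tilde{\theta})$ was factored from the left by an operator on $\cW\times H\times\cY$, here it must be factored from the right, $\bF'(\tilde{\bu},\tilde{\theta})=\bF'(\bu,\theta)\bR^{\tilde{\bu},\tilde{\theta},\bu,\theta}$, by an operator $\bR^{\tilde{\bu},\tilde{\theta},\bu,\theta}$ on $\cU\times\cX$. The first step is to convert the hypothesis \eqref{fhatB} into a range statement. Since $\mathcal{N}(B^*)=\mathcal{R}(B)^\perp$, the estimate \eqref{fhatB} gives $\mathcal{N}(B^*)\subseteq\mathcal{N}((\hat{f}_u'(\cdot,\tilde{\bu})^*-\hat{f}_u'(\cdot,\bu)^*)I)$, and dualizing this inclusion --- keeping careful track of the pointwise Banach adjoint and of the Riesz isomorphism $I$ from \eqref{I} --- yields $\mathcal{R}(\hat{f}_u'(\cdot,\tilde{\bu})-\hat{f}_u'(\cdot,\bu))\subseteq\overline{\mathcal{R}(B)}$. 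Hence $BB^\dagger(\hat{f}_u'(\cdot,\tilde{\bu})-\hat{f}_u'(\cdot,\bu))=\hat{f}_u'(\cdot,\tilde{\bu})-\hat{f}_u'(\cdot,\bu)$, which is the exact dual of the identity $(\hat{f}_u'(\cdot,\tilde{\bu})-\hat{f}_u'(\cdot,\bu))C^\dagger C=\hat{f}_u'(\cdot,\tilde{\bu})-\hat{f}_u'(\cdot,\bu)$ driving the preceding lemma.

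With this identity I would substitute \eqref{AC} (so that $f_\theta'=B$, $g_u'=C$, $g_\theta'=E$, $u_0'=B_0$ are point-independent and only $f_u'=\hat{f}_u'(\cdot,\cdot)$ carries $u$-dependence) together with the block form \eqref{bR_range_aao} into the six identities of \eqref{range_aao}, checking them one at a time. The substantive one is the $(\cW,\cU)$ block, where $-B\,\bR_{\cX\cU}=-BB^\dagger(\hat{f}_u'(\cdot,\tilde{\bu})-\hat{f}_u'(\cdot,\bu))$ collapses, by the displayed identity, to $-(\hat{f}_u'(\cdot,\tilde{\bu})-\hat{f}_u'(\cdot,\bu))$, so that the right-hand side becomes $\ddt-\hat{f}_u'(\cdot,\tilde{\bu})$ as required. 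The four equations of the $\cX$-column hold at once because $\bR_{\cU\cX}=0$, $\bR_{\cX\cX}=\mbox{id}$ and $f_\theta',u_0',g_\theta'$ are independent of the linearization point; the initial-value and observation rows then reduce to identities, the only thing to check being that the off-diagonal couplings $B_0\bR_{\cX\cU}$ and $E\bR_{\cX\cU}$ do not contribute. The norm bound $\|\bR^{\tilde{\bu},\tilde{\theta},\bu,\theta}-\mbox{id}\|\leq c_R$ then amounts to $\|B^\dagger(\hat{f}_u'(\cdot,\tilde{\bu})-\hat{f}_u'(\cdot,\bu))\|_{\cU\to\cX}\leq c_R$, which I read off from \eqref{fhatB} by passing to Hilbert-space adjoints and parametrizing over $\overline{\mathcal{R}(B)}=\mathcal{N}(B^*)^\perp$, dual to the supremum computation closing the proof of Lemma \ref{lem:adjrange}.

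For the reduced statement the extra hypothesis \eqref{Ainvbounded} is decisive: it guarantees that $(\ddt-\hat{f}_u'(\cdot,S(\tilde{\theta})),\delta_0):\cU\to\cW\times H$ is boundedly invertible with inverse norm at most $c_f$, which is precisely what makes the right-hand side of \eqref{R_range_red} meaningful. Writing $S'(\theta)$, $S'(\tilde{\theta})$ via the sensitivity equations \eqref{vtilv} and using $F'(\theta)=CS'(\theta)+E$, I would verify \eqref{range_red} by substituting \eqref{R_range_red} and splitting $\ddt-\hat{f}_u'(\cdot,S(\theta))=(\ddt-\hat{f}_u'(\cdot,S(\tilde{\theta})))+(\hat{f}_u'(\cdot,S(\tilde{\theta}))-\hat{f}_u'(\cdot,S(\theta)))$; the first summand telescopes against the inverse to produce $B^\dagger B=\mbox{Proj}_{\mathcal{N}(B)^\perp}$, giving exactly the decomposition on the second line of \eqref{R_range_red}. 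Finally, since $\mathcal{N}(B)\subseteq\mathcal{N}(F'(\theta))$, the operator $\mbox{Proj}_{\mathcal{N}(F'(\theta))^\perp}$ annihilates the term $\mbox{Proj}_{\mathcal{N}(B)^\perp}-\mbox{id}=-\mbox{Proj}_{\mathcal{N}(B)}$, so only the correction term survives, and combining \eqref{fhatB} (factor $c_R$), \eqref{Ainvbounded} (factor $c_f$) and $\|B\|$ yields the stated bound $\tilde{c}_R=c_Rc_f\|B\|$.

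The step I expect to be the main obstacle is the very first one: extracting from \eqref{fhatB} --- which is phrased on the adjoint side and carries the Riesz map $I:V^*\to V$ --- both the clean range inclusion $\mathcal{R}(\hat{f}_u'(\cdot,\tilde{\bu})-\hat{f}_u'(\cdot,\bu))\subseteq\overline{\mathcal{R}(B)}$ and the operator-norm bound by $c_R$. Because the inner products on $\cU$ and $\cW$ are not the naive pointwise ones (cf. \eqref{innprodU}), the bookkeeping of Hilbert adjoints ${}^\star$ versus Banach adjoints ${}^*$ and of where $I$ intervenes has to be done with care; one must likewise confirm that the off-diagonal block $\bR_{\cX\cU}$ does not disturb the initial-value and observation equations in \eqref{range_aao}. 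A secondary but genuine point, absent in the stationary theory of \cite{aao16}, is the well-definedness and uniform boundedness of the nonautonomous inverse $(\ddt-\hat{f}_u'(\cdot,S(\tilde{\theta})),\delta_0)^{-1}$ for $\tilde{\theta}$ near $\thdag$, which is exactly the content of \eqref{Ainvbounded}.
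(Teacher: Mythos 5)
Your overall strategy --- blockwise verification of the six identities in \eqref{range_aao} with the stated $\bR$, then the telescoping argument with $(\ddt-\hat{f}_u'(\cdot,S(\tilde{\theta})),\delta_0)^{-1}$ for the reduced case --- is exactly the intended one: the paper proves nothing separately for Lemma \ref{lem:range} but obtains it ``analogously'' to Lemma \ref{lem:adjrange}, and your reduced-case computation and norm bounds match that analogy. However, your very first step has a genuine gap. From the kernel inclusion $\mathcal{N}(B^\star)\subseteq\mathcal{N}(\Delta^\star)$, where $\Delta:=\hat{f}_u'(\cdot,\tilde{\bu})-\hat{f}_u'(\cdot,\bu)$, annihilator duality only gives $\mathcal{R}(\Delta)\subseteq\overline{\mathcal{R}(B)}$, and this does \emph{not} imply $BB^\dagger\Delta=\Delta$: the Moore--Penrose inverse $B^\dagger$ is defined only on $\mathcal{R}(B)\oplus\mathcal{R}(B)^\perp$, so $B^\dagger\Delta\bv$ is meaningless whenever $\Delta\bv\in\overline{\mathcal{R}(B)}\setminus\mathcal{R}(B)$. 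This is not pedantry here, because $\mathcal{R}(B)$ is typically non-closed in this setting (in the example of Section \ref{sec:ex}, $B$ sends $L^2$ sources into $L_2(0,T;V^*)$ through a compact embedding, so its range cannot be closed). The situation is genuinely asymmetric to Lemma \ref{lem:adjrange}: there the pseudoinverse sits on the outside, $\Delta C^\dagger C=\Delta$, and since $C^\dagger C=\mbox{Proj}_{\mathcal{N}(C)^\perp}$ is everywhere defined, the kernel inclusion $\mathcal{N}(C)\subseteq\mathcal{N}(\Delta)$ really is all that is needed; when the unknown factor sits on the inside, $\Delta=B\,(B^\dagger\Delta)$, kernel duality is strictly too weak.

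What closes the gap is the full quantitative strength of \eqref{fhatB} via Douglas' majorization--factorization--range-inclusion theorem (this is precisely what \cite[Lemma 7]{aao16}, cited by the paper, supplies): the estimate $\|\Delta^\star\bw\|\leq c_R\|B^\star\bw\|$ for all $\bw$, i.e.\ $\Delta\Delta^\star\leq c_R^2\,BB^\star$, yields \emph{simultaneously} the exact inclusion $\mathcal{R}(\Delta)\subseteq\mathcal{R}(B)$, well-definedness of $B^\dagger\Delta$ on all of $\cU$, and $\|B^\dagger\Delta\|_{\cU\to\cX}\leq c_R$. Concretely, $\Delta^\star(B^\star)^\dagger$ is densely defined and bounded by $c_R$ --- this is your ``dual supremum computation'' --- its continuous extension $T$ satisfies $TB^\star=\Delta^\star$ by the kernel inclusion, and $T^\star$ is the desired bounded factor. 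So both of your ingredients are present but in the wrong logical order: the norm estimate cannot be run \emph{after} assuming $B^\dagger\Delta$ exists, because it is what proves existence. A secondary point you flag but defer: the second and third identities of \eqref{range_aao} require $B_0B^\dagger\Delta=0$ and $EB^\dagger\Delta=0$, and these do not ``not contribute'' automatically --- they fail for generic $B_0$, $E$. They hold in the paper's example because there $E=0$ and $u_0$ is independent of $\theta$; some such hypothesis (equivalently, $\mathcal{R}(B^\dagger\Delta)\subseteq\mathcal{N}(B_0)\cap\mathcal{N}(E)$) must be imposed, a point the paper itself glosses over by omitting the proof.
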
 

\subsection{Convergence}
As a consequence we can deduce the following convergence results
\begin{corollary}\label{cor:conv}
Let $f$ and $g$ in \eqref{mod}, \eqref{obs} be defined by \eqref{AC} with \eqref{BB0C}.
\begin{enumerate}
\item 
If the assumptions of Lemma \ref{lem:adjrange} \eqref{lemadjrange_aao} are satisfied with $c_R<\frac13$ in a neighborhood of some solution $(\budag,\thdag)$ of \eqref{modex}, \eqref{obsex}, and $\hat{f}$ satisfying \eqref{growthf}, \eqref{growthfuftheta},
then the aLW, the aLWK, the aIRGNM as defined in Subsections \ref{sec:aaoLW}, \ref{sec:aaoLWK}, \ref{sec:aaoIRGNM}, respectively, with the stopping index $k_*$ chosen by the discrepancy principle
\begin{equation}\label{discrprinc_aao}
k_*(\delta)=\min\{k\in\mathbb{N}_0\ : \ \|\bF(\bu_k,\theta_k)-\bby^\delta\|_{\cW\times H\times\cY}\leq\tau\delta\}
\end{equation}
with $\delta$ as in \eqref{delta}, \eqref{deltawdeltaz} and $\tau$ sufficiently large but fixed, are well-defined and converge to $(\budag,\thdag)$ in the sense of a regularization method
\[
\|(\bu_{k_*(\delta)},\theta_{k_*(\delta)})-(\budag,\thdag)\|_{\cU\times\cX}\to0
\mbox{  as }\delta\to0
\]
locally, i.e., provided the starting point $(\bu_0,\theta_0)$ is sufficiently close in norm to $(\budag,\thdag)$.
\item 
If the assumptions of Lemma \ref{lem:adjrange} \eqref{lemadjrange_red} are satisfied with $\tilde{c}_R<\frac13$ in a neighborhood of some solution $(\budag,\thdag)$ of \eqref{modex}, \eqref{obsex}, with the parameter-to-state-map $S$ well-defined by \eqref{S} (cf., e.g., Proposition \ref{prop:Swelldef}), and $\hat{f}$ satisfying \eqref{growthf}, \eqref{fubounded}, \eqref{fthetabounded}, 
then the rLW, the rLWK, the rIRGNM as defined in Subsections \ref{sec:redLW}, \ref{sec:redLWK}, \ref{sec:redIRGNM}, respectively, with the stopping index $k_*$ chosen by the discrepancy principle
\[
k_*(\delta)=\min\{k\in\mathbb{N}_0\ : \ \|F(\bu_k,\theta_k)-\by^\delta\|_\cY\leq\tau\delta\}
\]
with $\delta$ as in \eqref{delta}, \eqref{deltawdeltaz} and $\tau$ sufficiently large but fixed, 
are well-defined and converge to $\thdag$ in the sense of a regularization method
\[
\|\theta_{k_*(\delta)}-\thdag\|_{\cX}\to0
\mbox{  as }\delta\to0
\]
locally, i.e., provided the starting point $\theta_0$ is sufficiently close in norm to $\thdag$.
\item 
If the assumptions of Lemma \ref{lem:range} \eqref{lemrange_aao} are satisfied with $c_R$ sufficiently small in a neighborhood of some solution $(\budag,\thdag)$ of \eqref{modex}, \eqref{obsex}, and $\hat{f}$ satisfying \eqref{growthf}, \eqref{growthfuftheta},
then the aIRGNM as defined in Subsection \ref{sec:aaoIRGNM} with a stopping index $k_*$ chosen such that 
\begin{equation}\label{kstapriori}
k_*(\delta)\to\infty \mbox{ and }\frac{\delta^2}{\alpha_{k_*(\delta)}}\to0 \mbox{ as }\delta\to0
\end{equation}
with $\delta$ as in \eqref{delta}, \eqref{deltawdeltaz}, 
is well-defined and converges to $(\budag,\thdag)$ in the sense of a regularization method
\[
\|(\bu_{k_*(\delta)},\theta_{k_*(\delta)})-(\budag,\thdag)\|_{\cU\times\cX}\to0
\mbox{  as }\delta\to0
\]
locally, i.e., provided the starting point $(\bu_0,\theta_0)$ is sufficiently close in norm to $(\budag,\thdag)$.
\item 
If the assumptions of Lemma \ref{lem:range} \eqref{lemrange_red} are satisfied with $\tilde{c}_R$ sufficiently small in a neighborhood of some solution $(\budag,\thdag)$ of \eqref{modex}, \eqref{obsex}, with the parameter-to-state-map $S$ well-defined by \eqref{S} (cf., e.g., Proposition \ref{prop:Swelldef}), and $\hat{f}$ satisfying \eqref{growthf}, \eqref{fubounded}, \eqref{fthetabounded}, 
then the rIRGNM as defined in Subsection \ref{sec:redIRGNM} with a stopping index $k_*$ chosen according to \eqref{kstapriori}, with $\delta$ as in \eqref{delta}, \eqref{deltawdeltaz}, 
is well-defined and converges to $\thdag$ in the sense of a regularization method
\[
\|\theta_{k_*(\delta)}-\thdag\|_{\cX}\to0
\mbox{  as }\delta\to0
\]
locally, i.e., provided the starting point $\theta_0$ is sufficiently close in norm to $\thdag$.
\end{enumerate}
\end{corollary}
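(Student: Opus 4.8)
The plan is to read Corollary \ref{cor:conv} as a direct application of the established Hilbert--space convergence theory for Landweber iteration \cite{HNS95}, Landweber--Kaczmarz \cite{HKLS07,HLS07,KS02}, and the IRGNM \cite{Baku92,BNS97}, checking in each case that the abstract hypotheses are supplied by the preceding results. Three ingredients feed every part. First, Propositions \ref{prop:deriv_bF}--\ref{prop:adj_F} give Gâteaux differentiability of $\bF$ and $F$ together with the Hilbert adjoints $\bF'(\bu,\theta)^\star$, $F'(\theta)^\star$ taken with respect to the inner product \eqref{innprodU} on $\cU$ and the canonical ones on the remaining factors; since the abstract theory is formulated over arbitrary Hilbert spaces, it applies verbatim provided these particular inner products are used throughout. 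Second, the noise level entering the discrepancy principle \eqref{discrprinc_aao} (resp.\ its reduced analogue) and the a priori rule \eqref{kstapriori} is exactly $\delta$ from \eqref{delta}, \eqref{deltawdeltaz}. Third, the stepsize requirement $\mu_k\in(0,1/\|\bF'\|^2]$ needs $\|\bF'(\bu,\theta)\|$ bounded on the neighbourhood, which follows from \eqref{growthf}, \eqref{growthfuftheta} together with the embedding \eqref{cUC0TH} (and boundedness of $\|F'(\theta)\|$ from \eqref{fubounded}, \eqref{fthetabounded} and well-definedness of $S$).

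For parts (i) and (ii) the decisive step is to pass from the adjoint range invariance of Lemma \ref{lem:adjrange} to the tangential cone condition \eqref{tangcone_aao}, resp.\ \eqref{tangcone_red}. Writing $x=(\bu,\theta)$, $\tilde x=(\tilde{\bu},\tilde{\theta})$ and $\Delta=\tilde x-x$, condition \eqref{adjrange_aao} reads $\bF'(\tilde x)=\bR^{\tilde x,x}\bF'(x)$ with the operator $\bR^{\tilde x,x}$ acting on the data space and $\|\bR^{\tilde x,x}-\mbox{id}\|\leq c_R$. Applying this along the segment, i.e.\ with $\bR_s:=\bR^{x+s\Delta,x}$, the fundamental theorem of calculus yields
\[
\bF(\tilde x)-\bF(x)-\bF'(x)\Delta=\int_0^1(\bR_s-\mbox{id})\bF'(x)\Delta\,ds,
\]
and substituting $\bF'(x)\Delta=(\bF(\tilde x)-\bF(x))-(\bF(\tilde x)-\bF(x)-\bF'(x)\Delta)$ and absorbing the defect gives the tangential cone estimate with constant $c_{tc}\leq c_R/(1-c_R)$. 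Here the hypothesis $c_R<\frac13$ is precisely what forces $c_{tc}<\frac12$, the threshold required by \cite{HNS95} for Landweber and inherited by the Kaczmarz \cite{KS02,HKLS07,HLS07} and IRGNM \cite{BNS97} convergence theorems in their tangential-cone variants. For the reduced statements one argues on $\overline{\mathcal{R}(F'(\theta))}$, as flagged after \eqref{range_red}, so that only closeness of $R^{\tilde{\theta},\theta}_{\cY\cY}$ to the identity on that subspace is used; for the Kaczmarz versions the condition is taken for each $\bF_j$, $F_j$ by restriction to $(\tau_j,\tau_{j+1})$, and the loping stopping rule of \cite{KS02,HKLS07,HLS07} is invoked.

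For parts (iii) and (iv) the range invariance of Lemma \ref{lem:range} --- now with $\bR$ acting on the domain and $\|\bR-\mbox{id}\|$, resp.\ its restriction to $\mathcal{N}(F'(\theta))^\bot$, bounded by $c_R$, resp.\ $\tilde{c}_R$ --- is exactly the affine-covariant nonlinearity condition under which the IRGNM converges with the a priori stopping rule \eqref{kstapriori}; the smallness of $c_R$, $\tilde{c}_R$ controls the perturbation terms in the standard recursion for the iteration error, and the assertion follows from \cite{Baku92,BNS97}. In all four parts the local nature of the claim is inherited from the abstract theory: one verifies inductively that every iterate stays in the neighbourhood of $(\budag,\thdag)$ on which Lemmas \ref{lem:adjrange}, \ref{lem:range} are assumed to hold, using the monotone decrease of the iteration error guaranteed by the tangential cone, resp.\ range invariance, estimate together with closeness of the starting point.

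The genuinely analytic content --- that the nonlinearity is tame enough for the structural conditions to hold --- has already been isolated in Lemmas \ref{lem:adjrange} and \ref{lem:range}, so the remaining work is bookkeeping, and I expect the main obstacles to be exactly two. The first is confirming, in the reduced setting, that the tangential cone and range invariance estimates survive the passage to the subspaces $\overline{\mathcal{R}(F'(\theta))}$ and $\mathcal{N}(F'(\theta))^\bot$, since there closeness of the $\bR$-operators to the identity is only available after projection. The second is the cyclic Kaczmarz iteration, where the tangential cone condition must hold uniformly over all blocks $\bF_j$, $F_j$ and convergence rests on the loping/stopping mechanism rather than the plain discrepancy principle; matching the present blockwise conditions to that mechanism is where the argument is least automatic.
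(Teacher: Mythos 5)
Your proposal is correct and follows essentially the same route as the paper: the paper's proof likewise invokes the standard Hilbert-space convergence theorems (Theorems 2.6, 3.26, 4.12, 4.13 of \cite{KNS08}, together with \cite{HNS95,BNS97,KS02}), uses the implication from adjoint range invariance to the tangential cone condition with $c_{tc}=\frac{c_R}{1-c_R}$ (so that $c_R<\frac13$ gives $c_{tc}<\frac12$ and $\tau\geq\frac{2}{1-3c_R}$), and disposes of the Kaczmarz versions by observing that the conclusions of Lemmas \ref{lem:adjrange}, \ref{lem:range} restrict to the operators $\bF_j$, $F_j$. Your explicit fundamental-theorem-of-calculus derivation of the tangential cone constant is a detail the paper states without proof, and the only point you flag as an obstacle but the paper settles in one line is the blockwise Kaczmarz verification; the paper also notes explicitly that G\^ateaux (rather than Fr\'echet) differentiability suffices for these theorems to apply.
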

\begin{proof}
The assertions follow from Theorems 2.6, 3.26, 4.12, 4.13 in \cite{KNS08} (see also the original references \cite{BNS97,HNS95,KS02}), using the fact that the adjoint range invariance conditions \eqref{adjrange_aao}, \eqref{adjrange_red} imply the tangential cone conditions \eqref{tangcone_aao},  \eqref{tangcone_red} with $c_{tc}=\frac{c_R}{1-c_R}$, $\tilde{c}_{tc}=\frac{\tilde{c}_R}{1-\tilde{c}_R}$, respectively. Therewith ``$\tau$ sufficiently large'' can be specified to 
$\tau\geq2\frac{1+c_{tc}}{1-2c_{tc}}=\frac{2}{1-3c_R}$
for the Landweber type methods (for the IRGNM type methods, the expression is more complicated and thus skipped here). Moreover, for the respective Kaczmarz versions, it is readily checked that under the Assumptions of Lemma \ref{lem:adjrange} \eqref{lemadjrange_aao}, \eqref{lemadjrange_red} its conclusions remain valid also for the operators $\bF_j$, $F_j$ from \eqref{bFj}, \eqref{Fj}. 
Note that for these assertions to hold, $\bF'$ and $F'$ need not necessarily be Fr\'{e}chet differentiable; G\^ateaux differentiability with linear and continuous derivatives is enough for our purposes. 
\end{proof}

\subsection{An Example}\label{sec:ex}
The convergence results of Corollary \ref{cor:conv} apply, e.g., to the problem of identifying the stationary source $\theta=(\theta_\omega,\theta_\Gamma)$ in a semilinear diffusion system from linear observations
\begin{eqnarray*}
\dot{\bu}(t)=\Delta\bu(t)-\Phi(\bu(t))+\chi_\omega(\theta_\omega)+\bw^\delta(t) \quad t\in(0,T)\times\Omega, \\
\frac{\partial \bu}{\partial \nu}=\theta_\Gamma\quad t\in(0,T)\times\Gamma,\\
\bu=0\quad t\in(0,T)\times(\partial\Omega\setminus\Gamma),\\
\bu(0)=u_0,\\
\by^\delta(t)=C\bu(t)+\bz^\delta(t)\quad t\in(0,T),
\end{eqnarray*}
with $\Omega\subseteq\R^d$ a bounded Lipschitz domain. 
The source $\theta=(\theta_\omega,\theta_\Gamma)\in L^2(\omega)\times L^2(\Gamma)$ consists of a boundary part $\theta_\Gamma$ an interior part $\theta_\omega$ acting on a subdomain $\omega\subseteq\Omega$ and extended by zero to $\Omega$
\[
\chi_\omega: L_2(\omega)\to C(0,T;L_2(\Omega))\,, \quad
(\chi_\omega\phi)(x,t)=\left\{\begin{array}{ll} \phi(x)\mbox{ for }x\in \omega\\
0\mbox{ else.}\end{array}\right.
\]
We aim at verifying the conditions for the all-at-once versions. To this end, we assume the function $\Phi:\R\to\R$ to be differentiable and satisfy the growth conditions
\begin{equation}\label{growthPhi}
|\Phi(\lambda)|\leq c_\Phi(1+|\lambda|^\gamma)\,, \quad 
|\Phi'(\lambda)|\leq c_{\Phi'}(1+|\lambda|^{\gamma'})
\end{equation}
with 
\begin{eqnarray}\label{gamma}
&&\gamma\leq 3 \mbox{ if }d=1, \quad \gamma<3\mbox{ if }d=2, \quad \gamma
\leq 1+\frac{4}{d}\mbox{ if }d\geq3,\\
&&\gamma'\leq2  \mbox{ if }d=1, \quad \gamma'<2\mbox{ if }d=2, \quad \gamma'\leq \frac{4}{3}\mbox{ if }d=3\quad \gamma'\leq \frac{2}{d-2}\mbox{ if }d\geq4.
\nonumber
\end{eqnarray}
No monotonicity is imposed on $\Phi$ since we do not need a parameter-to-state map.

With the function spaces
\[
\hspace*{-2cm}
\eqalign{
\cX= L^2(\omega)\times L^2(\Gamma)\\
V=H_\Gamma^1(\Omega)=\{v\in H^1(\Omega)\ : \ \mbox{tr}_{\partial\Omega\setminus\Gamma}v=0 \}\supseteq H_0^1(\Omega)\mbox{ with norm } \|v\|_V=\|\nabla v\|_{L^2(\Omega)}\\
H=L^2(\Omega)\,,
}
\]
the initial data $u_0\in H=L_2(\Omega)$, an observation operator $C$ mapping $V$ linearly and continuously into some observation space $Z$, e.g.,
\[
Cv=v|_{\Sigma}\,, \quad Z=L_2(\Sigma) 
\]
for $\Sigma$ an open subset of $\Omega$ or part of its boundary $\partial\Omega$, (in which case $C$ is a trace operator),
perturbations $\bw^\delta\in L_2(0,T;V^*)\subseteq L_2(0,T;H^{-1}(\Omega))$, $\bz^\delta\in L_2(0,T;Z)$,
and the variational formulation of the model
\[
\hspace*{-2cm}\eqalign{
\forall^{\mbox{\footnotesize{a.e.}}}t\in(0,T)\,,\forall v\in V\ : \ \int_\Omega \dot{\bu}(t)v\,dx
=-\int_\Omega \Bigl(\nabla\bu(t)\cdot\nabla v+\Phi(\bu(t))v +\bw^\delta(t)v\Bigr)\, dx\\
\hspace*{4.5cm}+\int_\omega \theta_\omega v\, dx +\int_\Gamma \theta_\Gamma v\, ds\\
\bu(0)=u_0
}
\]
this fits into the above framework. Indeed, the growth condition \eqref{growthPhi} implies that $\hat{f}$ defined by 
\[
\forall u,v\in V\ : \ \dup{\hat{f}(u)}{v}{V}=-\int_\Omega \Bigl(\nabla u\cdot\nabla v+\Phi(u)v \Bigr)\, dx
\]
satisfies the growth estimate
\begin{equation}\label{growthestPhi}
\hspace*{-2cm}
\eqalign{
\|\hat{f}(u)\|_{V^*}
=\sup_{v\in V, \|\nabla v\|_{L_2(\Omega)}\le1} \left|\int_\Omega \Bigl(\nabla u\cdot\nabla v+\Phi(u)v \Bigr)\, dx\right|\\
\leq\|u\|_V+c_\Phi\sup_{v\in V, \|\nabla v\|_{L_2(\Omega)}\le1} \int_\Omega(1+|u|^\gamma)|v|\, dx\\
\leq\|u\|_V+c_\Phi c_{PF}\sqrt{|\Omega|}+c_\Phi\sup_{v\in V, \|\nabla v\|_{L_2(\Omega)}\le1} \int_\Omega
|u|^{\gamma-1}|uv|\, dx\\
\leq\|u\|_V+c_\Phi c_{PF}\sqrt{|\Omega|}+c_\Phi\sup_{v\in V, \|\nabla v\|_{L_2(\Omega)}\le1} 
\|u\|_{L_2(\Omega)}^{\gamma-1}\|u\|_{L_\frac{4}{3-\gamma}(\Omega)}
\|v\|_{L_\frac{4}{3-\gamma}(\Omega)}\,,
}
\end{equation}
for any $v\in V$, where we have used H\"older's inequality with $p=\frac{2}{\gamma-1}$, $p^*=\frac{2}{3-\gamma}$, and the Cauchy-Schwarz inequality, and $c_{PF}$ is the constant in the Poincar\'{e}-Friedrichs inequality
\[
\|v\|_{L_2(\Omega)}\leq c_{PF}\|\nabla v\|_{L_2(\Omega)}\,.
\]
Since boundedness of the embedding 
\begin{equation}\label{pbar}
V\subseteq H^1(\Omega)\to L_{\bar{p}}(\Omega) \mbox{ with } 
\bar{p}\left\{\begin{array}{ll}
=\infty&\mbox{ if }d=1,\\ 
<\infty&\mbox{ if }d=2,\\ 
=\frac{2d}{d-2}&\mbox{ if }d\geq3
\end{array}\right.
\end{equation}
with constant $c_{V\to L_{\bar{p}}(\Omega)}$, and $\frac{4}{3-\gamma}\leq\bar{p}$ holds under constraint \eqref{gamma}, estimate \eqref{growthestPhi} implies \eqref{growthf} with $\psi(\lambda)=c(1+\lambda^{\gamma-1})$ for some constant $c$.

The growth condition for differentiability also follows from boundedness of the embedding \eqref{pbar} (where we choose $\bar{p}>4$ and $\bar{p}\geq\frac{4}{2-\gamma'}$ in case $d=2$),
together with the following estimates.
H\"older's inequality with 
$p=\frac{\bar{p}}{2}$,
$p^*=\frac{\bar{p}}{\bar{p}-2}$
yields
\[
\hspace*{-2cm}
\eqalign{
\|\Phi'(v)\|_{L^2(\Omega)\to V^*}
=\sup_{z\in L_2(\Omega), \|z\|_{L_2}\leq 1}\|\Phi'(v)z\|_{V^*}
=\sup_{\stackrel{z\in L_2(\Omega), \|z\|_{L_2}\leq 1}{w\in V, \|w\|_{V}\leq 1}}
\int_\Omega\Phi'(v)\,z\,w\, dx\\
=\sup_{w\in V, \|w\|_{V}\leq 1} \|\Phi'(v)\,w\|_{L_2(\Omega)}
\leq c_{V\to L_{\bar{p}}} \|\Phi'(v)\|_{L_{\frac{2\bar{p}}{\bar{p}-2}}(\Omega)}
}
\]
In view of the growth condition \eqref{growthPhi} it therefore remains to estimate the expression
\begin{equation}\label{est:v4L1}
\||v|^{\gamma'}\|_{L_{\frac{2\bar{p}}{\bar{p}-2}}(\Omega)}
=\|v^{\frac{2\gamma'\bar{p}}{\bar{p}-2}}\|_{L_1(\Omega)}^{\frac{\bar{p}-2}{2\bar{p}}}\,.
\end{equation}
In case $d\in\{2,3\}$, 
this can be done by using H\"older's inequality with 
$p=\frac{\bar{p}-2}{2}>1$, $p^*=\frac{\bar{p}-2}{\bar{p}-4}$ so that redefining $\gamma'=\max\{1,\gamma'\}$ we get, in both cases $d=2,3$, by \eqref{gamma}, 
$
\frac{2(\gamma'-1)\bar{p}}{\bar{p}-2}p^*=\frac{2(\gamma'-1)\bar{p}}{\bar{p}-4}=:q\leq2
$, 
hence 
\[
\hspace*{-2cm}\eqalign{
\|v^{\frac{2\gamma'\bar{p}}{\bar{p}-2}}\|_{L_1(\Omega)}^{\frac{\bar{p}-2}{2\bar{p}}}
=\||v|^{\frac{2(\gamma'-1)\bar{p}}{\bar{p}-2}} |v|^{\frac{2\bar{p}}{\bar{p}-2}}\|_{L_1(\Omega)}^{\frac{\bar{p}-2}{2\bar{p}}}
\leq |\Omega|^{\frac{(\gamma'-1)(2-q)}{2}} c_{V\to L_{\bar{p}}} \|v\|_{L_2(\Omega)}^{\gamma'-1}\|v\|_{V}\,,
}
\]
i.e., \eqref{growthfuftheta} with $\phi_{H,u}^2(\lambda)=|\Omega|^{\frac{(\gamma'-1)(2-q)}{2}} c_{V\to L_{\bar{p}}}|\lambda|^{\gamma'-1}$.\\
In case $d=1$, where $\bar{p}=\infty$, $\frac{2\bar{p}}{\bar{p}-2}=2$, this estimate (with $q=\frac{3\gamma'-4}{\gamma'-1}$) can be easily verified without using H\"older's inequality.\\
For $d\geq4$ we have 
$
\frac{2\gamma'\bar{p}}{\bar{p}-2}=:q\leq \bar{p} \mbox{ and } \gamma'\leq1
$,
hence
\[ 
\|v^{\frac{2\gamma'\bar{p}}{\bar{p}-2}}\|_{L_1(\Omega)}^{\frac{\bar{p}-2}{2\bar{p}}}
=\|v\|_{L_{\frac{2\gamma'\bar{p}}{\bar{p}-2}}(\Omega)}^{\gamma'}
\leq |\Omega|^{\frac{\bar{p}-q}{\bar{p}}} \|v\|_{L_{\bar{p}}(\Omega)}^{\gamma'}
\leq |\Omega|^{\frac{\bar{p}-q}{\bar{p}}}  c_{V\to L_{\bar{p}}}^{\gamma'} (1+\|v\|_{V}).
\]

\medskip

The adjoint range invariance condition \eqref{adjrange_aao} can be verified in case of full observations, i.e.,
\begin{equation}\label{Cfull}
C:\cU\to\cY=L_2(0,T;L_2(\Omega))\,, \quad v\mapsto v\,,
\end{equation}
provided additionally $\Phi'$ is H\"older continuous with 
\begin{equation}\label{growthPhipp}
|\Phi'(\lambda)-\Phi'(\tilde{\lambda})|\leq c_{\Phi''}(1+|\lambda|^{\gamma''}+|\tilde{\lambda}|^{\gamma''})|\tilde{\lambda}-\lambda|^\kappa
\end{equation}
and 
\begin{equation}\label{gammapp}
\gamma''+\kappa\leq \frac{\bar{p}-2}{\bar{p}}
\end{equation}
for $\bar{p}$ as in \eqref{pbar}.
Namely the sufficient condition \eqref{fhatC} for adjoint range invariance can be verified,  
using H\"older's inequality with $p=\frac{2}{\bar{p}^*}$, $p^*=\frac{2}{2-\bar{p}^*}$, $p^*\bar{p}^*=\frac{2\bar{p}^*}{2-\bar{p}^*}$
\[
\hspace*{-2cm}\eqalign{
\|
\hat{f}_u'(\cdot,\tilde{\bu})\bv-\hat{f}_u'(\cdot,\bu)\bv\|_\cW
\leq c_{V\to L_{\bar{p}}(\Omega)} \left(\int_0^T \|(\Phi'(\tilde{\bu}(t))-\Phi'(\bu(t)))\bv(t)\|_{L_{\bar{p}^*}(\Omega)}^2\, dt \right)^{1/2}
\\
\leq c_{V\to L_{\bar{p}}(\Omega)} c_{\Phi''}\left(\int_0^T \|(1+|\tilde{\bu}(t)|^{\gamma''}+|\bu(t)|^{\gamma''}) |\tilde{\bu}(t)-\bu(t)|^\kappa\|_{L_{\frac{2\bar{p}^*}{2-\bar{p}^*}}(\Omega)}^2 \|\bv(t)\|_{L_2(\Omega)}^2
\, dt\right)^{1/2}\\
\leq c_{V\to L_{\bar{p}}(\Omega)} c_{\Phi''}\|(1+|\tilde{\bu}|^{\gamma''}+|\bu|^{\gamma''}) |\tilde{\bu}-\bu|^\kappa\|_{C(0,T;L_{\frac{2\bar{p}^*}{2-\bar{p}^*}}(\Omega))}^2 \|\bv\|_\cY
}
\]
where $(\kappa+\gamma'')\frac{2\bar{p}^*}{2-\bar{p}^*}=(\kappa+\gamma'')\frac{2\bar{p}}{\bar{p}-2}\leq 2$, so that we can again rely on boundedness of the embedding $\cU\subseteq C(0,T;L_2(\Omega))$.

Unless $\Phi$ is linear, the range invariance condition \eqref{range_aao} cannot be expected to hold in the setting with a time independent parameter, since in case of full observations it can only hold if $\bR^{\tilde{\bu},\bu,\tilde{\theta},\theta}_{\cU,\cU}=\mbox{id}$ (note that $E=0$), hence the first line in \eqref{range_aao} would imply 
\[
\forall \bv\in\cU\, \forall^{\mbox{\footnotesize{a.e.}}}t\in(0,T)\, : \  (\Phi'(\bu(t)-\Phi'(\tilde{\bu}(t))\bv(t)=(\chi_\omega \bR^{\tilde{\bu},\bu,\tilde{\theta},\theta}_{\cX,\cU} \bv)(t)\,,
\]
so that by definition of $\chi_\omega$, the difference $(\Phi'(\bu)-\Phi'(\tilde{\bu}))\bv$ would have to be constant in time for all $\bv\in\cU$, which can only hold if this difference vanishes. However, for the latter to hold for all $\tilde{\bu}$ in a $\cU$ neighborhood of $\bu$, $\Phi$ would necessarily have to be linear.

\begin{corollary}
Let \eqref{growthPhi}, \eqref{gamma}, \eqref{Cfull}, \eqref{growthPhipp}, \eqref{gammapp} be satisfied.
\\
Then there exists $\rho>0$ such that for all $\bu_0\in L^2(\Omega)$, $\theta\in L^2(\omega)\times L^2(\Gamma)$ satisfying $\|\bu_0-\budag\|_{L^2(\Omega)}+\|\theta_0-\thdag\|_{L^2(\omega)\times L^2(\Gamma)}<\rho$, the aLW, the aLWK, the aIRGNM as defined in Subsections \ref{sec:aaoLW}, \ref{sec:aaoLWK}, \ref{sec:aaoIRGNM}, respectively, with the stopping index $k_*$ chosen by the discrepancy principle \eqref{discrprinc_aao} with $\delta$ as in \eqref{delta}, \eqref{deltawdeltaz} and $\tau$ sufficiently large but fixed, are well-defined and converge to $(\budag,\thdag)$ in the sense of a regularization method
\[
\hspace*{-2cm}
\|\bu_{k_*(\delta)}-\budag\|_{L^2(0,T;H_\Gamma^1(\Omega))}+
\|\dot{\bu}_{k_*(\delta)}-\dot{\bu}^\dagger\|_{L^2(0,T;(H_\Gamma^1(\Omega))^*)}+
\|\theta_{k_*(\delta)}-\thdag\|_{L^2(\omega)\times L^2(\Gamma)}\to0
\]
as $\delta\to0$.
\end{corollary}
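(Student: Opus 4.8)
The plan is to apply Corollary \ref{cor:conv}\,(1), whose hypotheses are exactly the assumptions of Lemma \ref{lem:adjrange}\,\eqref{lemadjrange_aao} with $c_R<\frac13$ together with the growth conditions \eqref{growthf}, \eqref{growthfuftheta} on $\hat f$. Since that corollary already yields convergence of all three all-at-once schemes (aLW, aLWK and aIRGNM) at once, the whole task reduces to verifying these structural and smallness conditions for the diffusion example. First I would cast the problem into the affine template \eqref{AC}, \eqref{BB0C}: the weak formulation identifies $\hat f$ through $\dup{\hat f(u)}{v}{V}=-\int_\Omega(\nabla u\cdot\nabla v+\Phi(u)v)\,dx$, the parameter enters linearly via $B:(\theta_\omega,\theta_\Gamma)\mapsto\chi_\omega\theta_\omega+\mathrm{tr}_\Gamma^*\theta_\Gamma$, the observation is $C=\mathrm{id}$ by \eqref{Cfull}, and $E=0$; moreover $u_0$ is fixed data independent of $\theta$, so $u_0'(\theta)=0$ and $B_0=0$. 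Boundedness of $B$ into $\cW=L^2(0,T;V^*)$ follows from $\chi_\omega:L^2(\omega)\to C(0,T;H)\subseteq\cW$ and from trace duality $L^2(\Gamma)\to V^*$, while $C=\mathrm{id}$ is trivially bounded.

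Next I would assemble the growth verifications already carried out in the discussion preceding the corollary. The estimate \eqref{growthestPhi}, combined with the embedding \eqref{pbar} under the constraint \eqref{gamma}, gives \eqref{growthf}; the subsequent Hölder estimates for $\|\Phi'(v)\|_{L^2(\Omega)\to V^*}$ supply the splitting $f=f^1+f^2$ required in \eqref{growthfuftheta}, the Laplacian playing the role of $f^1$ (bounded $V\to V^*$) and the multiplication operator $\Phi'(u)$ the role of $f^2$ (bounded $H\to V^*$ with the stated growth). Because $f_\theta'=B$ is constant and $g_u'=C$, $g_\theta'=E=0$, $u_0'=0$, the remaining parts of \eqref{growthfuftheta}, \eqref{growthgugtheta} hold trivially. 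The sufficient condition \eqref{fhatC} for the adjoint range invariance has likewise been established in the paragraph before the corollary, using the Hölder continuity \eqref{growthPhipp}, the exponent constraint \eqref{gammapp}, and the embedding $\cU\subseteq C(0,T;L_2(\Omega))$.

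The only point that still needs care --- and the genuine obstacle --- is the smallness $c_R<\frac13$. Here I would exploit that the bound obtained for \eqref{fhatC} is not merely finite but carries a factor involving $|\tilde\bu-\bu|^\kappa$ with Hölder exponent $\kappa>0$; invoking $\cU\subseteq C(0,T;L_2(\Omega))$ this factor is controlled by $c\,(1+\|\tilde\bu\|_\cU^{\gamma''}+\|\bu\|_\cU^{\gamma''})\,\|\tilde\bu-\bu\|_\cU^{\kappa}$. Hence restricting $\tilde\bu,\bu$ to a ball of radius $\rho$ about $\budag$ forces $c_R$ to be a positive power of $\rho$, so a sufficiently small $\rho$ guarantees $c_R<\frac13$. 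The delicate part is to argue that $c_R$ may be taken as a single constant valid for all admissible pairs $(\tilde\bu,\tilde\theta)$, $(\bu,\theta)$ in the neighbourhood, rather than merely pointwise, which is precisely what the uniform estimate above provides.

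With all hypotheses of Corollary \ref{cor:conv}\,(1) in force on this $\rho$-neighbourhood, its conclusion delivers well-definedness and regularizing convergence $(\bu_{k_*(\delta)},\theta_{k_*(\delta)})\to(\budag,\thdag)$ in $\cU\times\cX$ under the discrepancy principle \eqref{discrprinc_aao}; unwinding the $\cU$-norm via its equivalence to $\|\cdot\|_{L^2(0,T;H_\Gamma^1)}+\|\dot{(\cdot)}\|_{L^2(0,T;(H_\Gamma^1)^*)}$ yields exactly the asserted convergence. I would finally record explicitly that the range invariance branch is unavailable: as shown just before the corollary, \eqref{range_aao} fails for any nonlinear $\Phi$ under full observations, so convergence of the aIRGNM in this example must be routed through the tangential-cone (adjoint range invariance) part of Corollary \ref{cor:conv}\,(1) rather than through part (3).
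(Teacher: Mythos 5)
Your proposal is correct and follows essentially the same route as the paper: the text preceding the corollary is the paper's proof, namely verification of the growth conditions \eqref{growthf}, \eqref{growthfuftheta} via \eqref{growthestPhi} and the H\"older estimates, verification of \eqref{fhatC} from \eqref{growthPhipp}, \eqref{gammapp}, and then an appeal to Corollary \ref{cor:conv}\,(1), with the aIRGNM necessarily routed through the adjoint range invariance branch since \eqref{range_aao} fails for nonlinear $\Phi$. Your explicit argument that the factor $\|\tilde{\bu}-\bu\|_\cU^\kappa$ in the bound for \eqref{fhatC} forces $c_R<\frac13$ on a sufficiently small $\rho$-neighbourhood is exactly the (implicit) reason the paper can assert existence of such a $\rho$, so this is a faithful and even slightly more detailed rendering of the paper's argument.
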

In Figures \ref{fig:redLW}, \ref{fig:aaoLW} we provide a very preliminary and exemplary comparison of the reduced and the all-at-once Landweber iteration for the above example with
\[
\eqalign{
\Omega=\omega=(0,1), \ T=\frac{1}{10},\ \thdag(x)=\frac{\sin(2\pi x)}{10}, \ u_0\equiv0, \ \Phi(\lambda)=10\,\mbox{sign}(\lambda)\lambda^2, \\
\theta_0\equiv0, \ \bu_0\equiv0, \ \mu_k\equiv 1.
}
\]
Figures \ref{fig:redLW}, \ref{fig:aaoLW} show the results after 50000 iterations of the reduced and the all-at-once Landweber method from Subsections \ref{sec:redLW} and \ref{sec:aaoLW}, respectively, where in both cases we used imlicit Euler timestepping with a constant step size of $10^{-3}$ and took advantage of a diagonalization of the discretized (with spatial step size $10^{-2}$) Lapace operator. The results are very similar, whereas the cpu times differ by a factor of more than five in favor of the all-at-once (379.56 sec) versus the reduced (1996.88 sec) method. We expect this difference to get even more pronounced when exploiting the possibility of using fast integrators for evaluation the variation of constants formulas \eqref{varofconst}.
\begin{figure}
\includegraphics[width=0.33\textwidth]{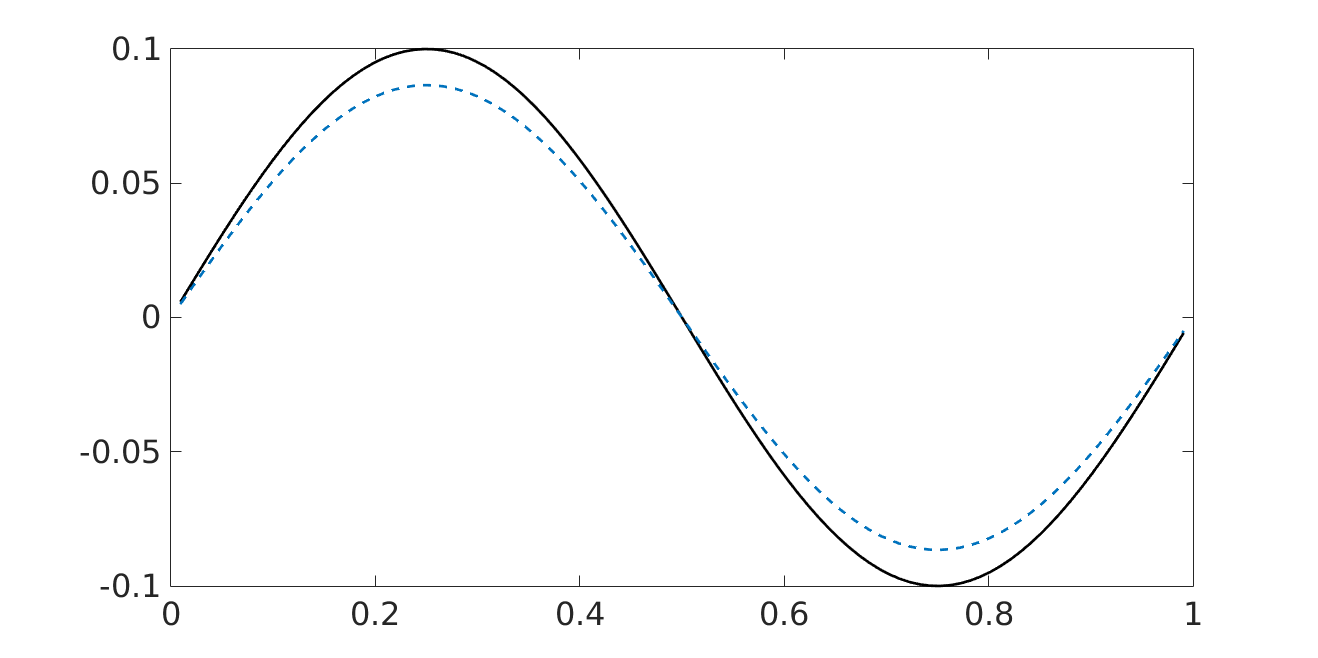}
\includegraphics[width=0.33\textwidth]{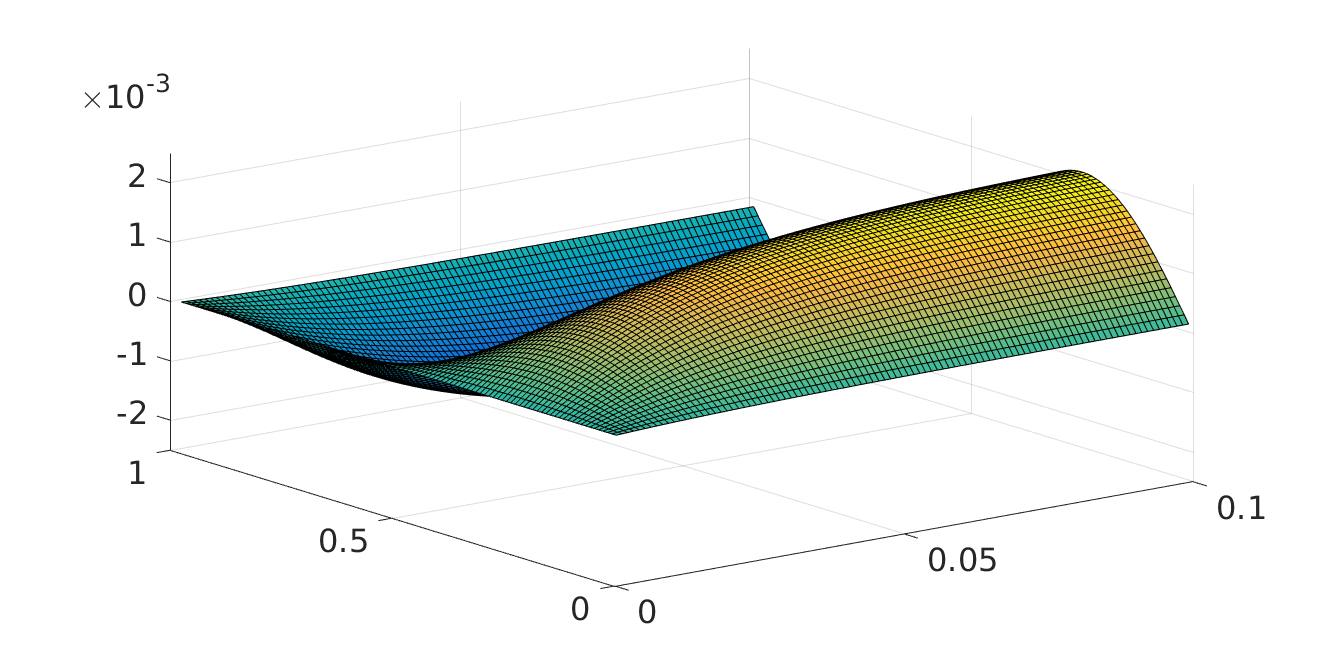}
\includegraphics[width=0.33\textwidth]{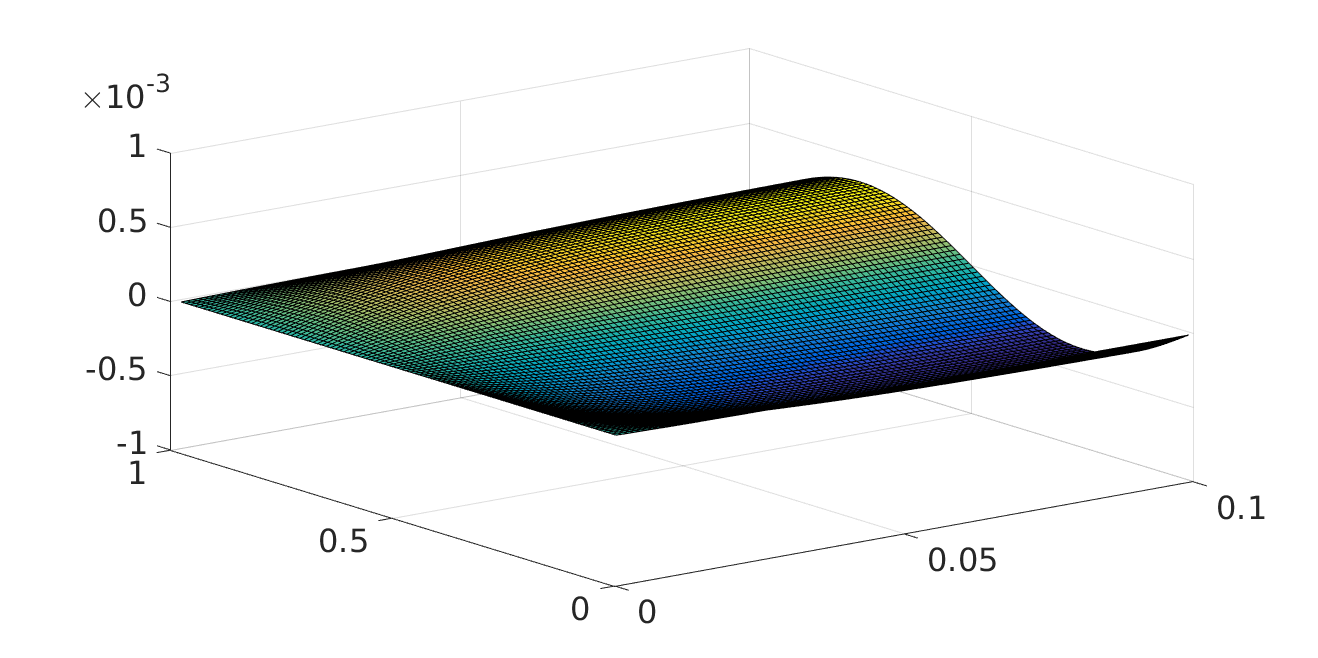}
\caption{Results of reduced Landweber method after 50000 steps:
Left: exact parameter $\thdag$ (solid) and reconstruction (dashed);
Middle: reconstructed state $\bu$;
Right: difference $\bu-\budag$ between exact and reconstructed state.
\label{fig:redLW}}
\end{figure}
\begin{figure}
\includegraphics[width=0.33\textwidth]{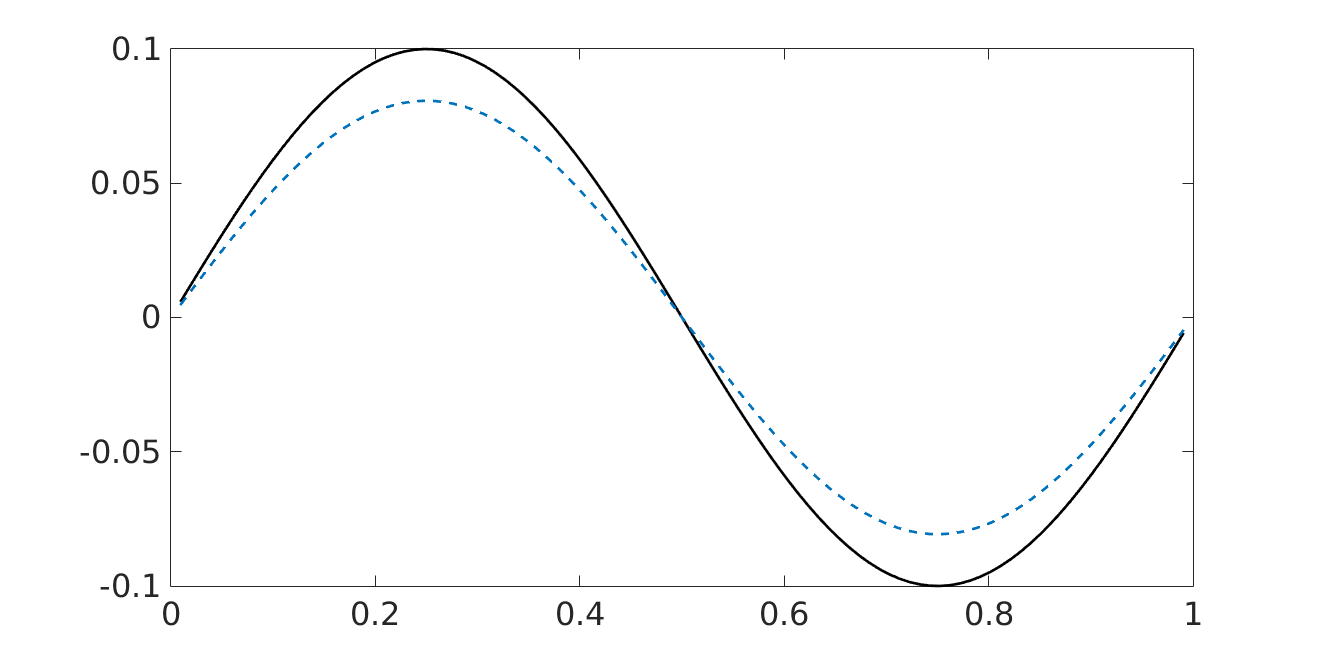}
\includegraphics[width=0.33\textwidth]{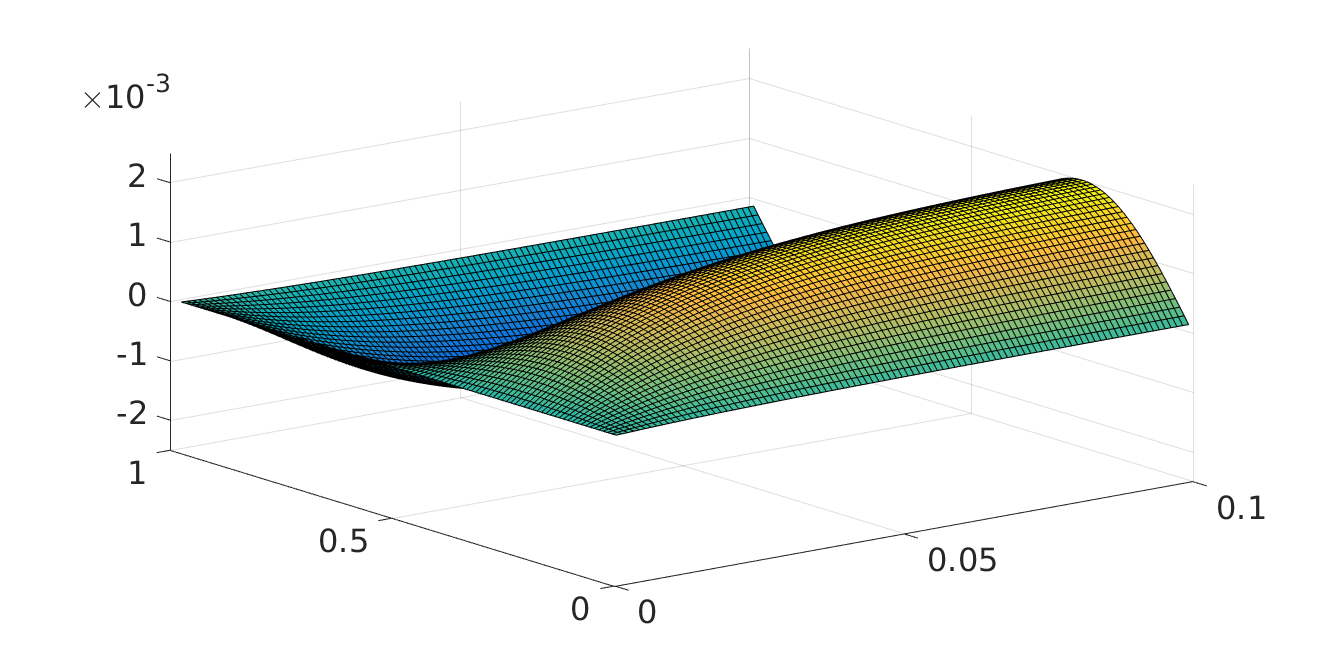}
\includegraphics[width=0.33\textwidth]{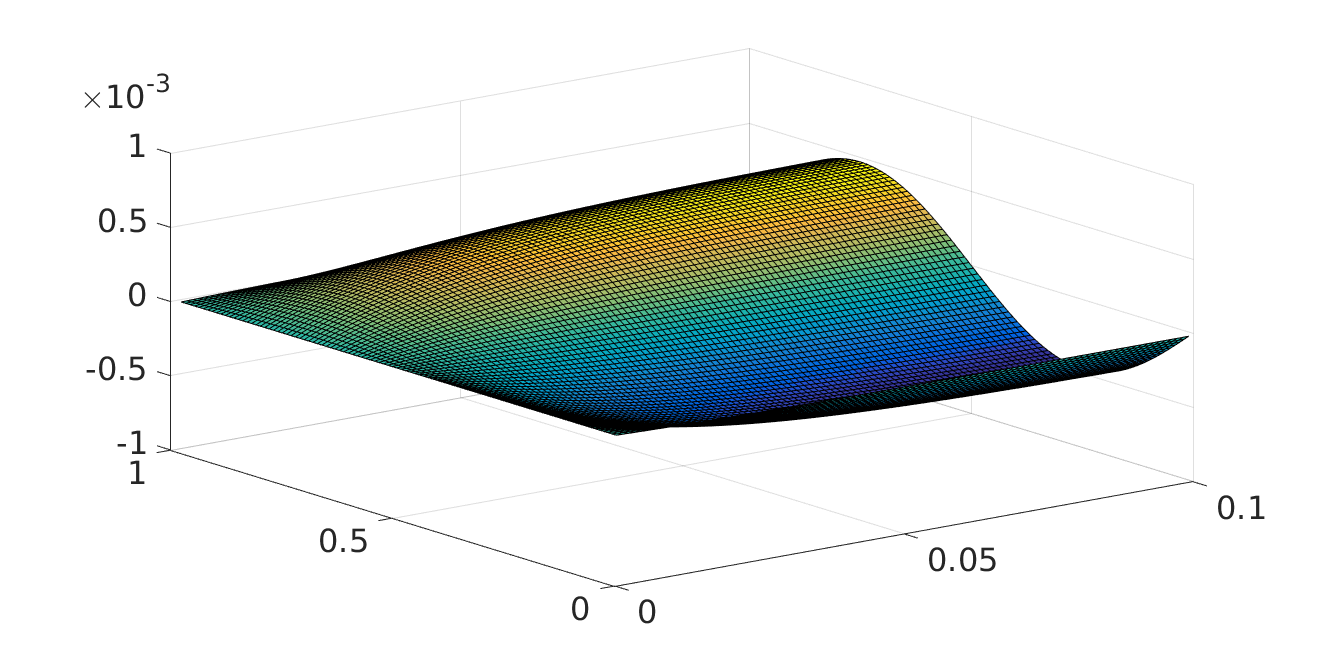}
\caption{Results of all-at-once Landweber method after 50000 steps:
Left: exact parameter $\thdag$ (solid) and reconstruction (dashed);
Middle: reconstructed state $\bu$;
Right: difference $\bu-\budag$ between exact and reconstructed state.
\label{fig:aaoLW}}
\end{figure}
More detailed tests for all methods under consideration, including perturbations of the state equation and the observations, will be subject of future work.   

\section{Conclusions and outlook} \label{sec:Concl}

In this paper we have started an investigation and comparison of some iterative regularization methods in all-at-once versus reduced formulations of time dependent inverse problems over finite time horizons.
It turned out that the resulting methods differ considerably between their all-at-once and their reduced versions. 
Also, we have derived some convergence results from the existing Hilbert space regularization theory under appropriate conditions adopted to this time dependent setting.  

\medskip

In this context, several further questions arise that are to be tackled in future research:

As mentioned in Remark \ref{rem:timedependenttheta} the theory here in principle allows for time dependent parameters $\theta$. However, details such as growth conditions on $\tilde{f}$, $\tilde{g}$ for proving differentiability, are yet to be investigated. An advantage of time dependent parameters is the possibility of verifying range invariance and hence convergence of the IRGNM type methods mentioned here, as well as the IRGNM Kaczmarz method from \cite{BuKa04}. 

Regularization theory in general Banach space would allow to use the weak form of the model equation according to Remark \ref{rem:weakformulation}, which might on one hand simplify the adjoints in $\cU$ and on the other hand enable stronger nonlinearities in the semilinear equation from Section \ref{sec:ex}, cf., e.g., \cite[Theorem 5.5]{TroeltzschBuch}. However, note that this would require to work with nonreflexive spaces such as $C(0,T;H)$, for which regularization theory for iterative methods is developed to a lesser extent.

While convergence without rates has been shown here, convergence rates require the exact solution to satisfy certain regularity conditions (source conditions) whose appearance in the time dependent setting is yet to be investigated. 

Also numerical implementation as well as computational tests and comparisons are subject of future work.

\section*{Acknowledgment}

The author gratefully acknowledges financial support by the Austrian Science Fund FWF under the grants I2271 ``Regularization and Discretization of Inverse Problems for PDEs in Banach Spaces''  and P30054 ``Solving Inverse Problems without Forward Operators'' as well as partial support by the Karl Popper Kolleg ``Modeling-Simulation-Optimization'', funded by the Alpen-Adria-Universit\" at Klagenfurt and by the Carinthian Economic Promotion Fund (KWF).

Moreover, we wish to thank both reviewers for fruitful comments leading to an improved version of the manuscript.

\medskip
\end{document}